\numberwithin{equation}{section}
\numberwithin{table}{section}
\newtheorem{em-deff}{Definition}[section]
\newtheorem{lemma}[em-deff]{Lemma}
\newtheorem{theorem}[em-deff]{Theorem}
\newtheorem{corollary}[em-deff]{Corollary}
\newtheorem{proposition}[em-deff]{Proposition}
\newtheorem{em-fact}[em-deff]{Fact}
\newtheorem{em-example}[em-deff]{Example}
\newtheorem{claim}[em-deff]{Claim}
\newtheorem{problem}[em-deff]{Problem}
\newtheorem{em-remark}[em-deff]{Remark}
\newenvironment{example}{\begin{em-example} \em }{ \end{em-example}}
\newenvironment{remark}{\begin{em-remark} \em }{ \end{em-remark}}
\newenvironment{definition}{\begin{em-deff} \em }{ \end{em-deff}}
\newenvironment{fact}{\begin{em-fact} }{ \end{em-fact}}
\def\p{\mathbb P}
\def\P{\mathcal P}
\newcommand{\F}{\mathcal F}
\newcommand{\N}{\mathbb N}
\newcommand{\J}{\mathbb J}
\renewcommand{\mod}{\matrm{mod}}
\newcommand{\D}{\mathcal D}
\newcommand{\Z}{\mathbb Z}
\newcommand{\Q}{\mathbb Q}
\newcommand{\R}{\mathbb R}
\newcommand{\C}{\mathbb C}
\newcommand{\T}{\mathbb T}
\DeclareMathOperator{\ent}{ent}
\DeclareMathOperator{\End}{End}
\DeclareMathOperator{\Aut}{Aut}
\def\mod{\mathrm{mod}}
\def\1{(1)}
\def\2{(2)}
\def\3{(3)}
\def\4{(4)}
\def\Pi{\P^{_{<\infty}}}
\def\c{\mathcal C}
\DeclareMathOperator{\flow}{Flow}
\title{Algebraic Yuzvinski Formula}
\author{Anna Giordano Bruno}
\address{Dipartimento di Matematica e Informatica, Universit\`a di Udine, Via delle Scienze, 206 - 33100 Udine}
\email{\tt anna.giordanobruno@uniud.it}
\author{Simone Virili}
\address{Departament de Matematiques, Universitat Autonoma de Barcelona, Edifici C - 08193 Bellaterra (Barcelona)}
\email{\tt simone@mat.uab.cat}
\begin{document}

\maketitle

\begin{abstract}
In 1965 Adler, Konheim and McAndrew defined the topological entropy for continuous self-maps of compact spaces. Topological entropy is very well-understood for endomorphisms of compact Abelian groups. A fundamental result in this context is the so-called Yuzvinski Formula, showing that the value of the topological entropy of a full solenoidal automorphism coincides with the Mahler measure of its characteristic polynomial.

\smallskip
In two papers of 1979 and 1981 Peters gave a different definition of entropy for automorphisms of locally compact Abelian groups. This notion has been appropriately modified for endomorphisms in two recent papers, where it is called algebraic entropy.

\smallskip
The goal of this paper is to prove a perfect analog of the Yuzvinski Formula for the algebraic entropy, namely, the Algebraic Yuzvinski Formula, giving the value of the algebraic entropy of an endomorphism of a finite-dimensional rational vector space as the Mahler measure of its characteristic polynomial.

\smallskip
Finally, several applications of the Algebraic Yuzvinski Formula and related open problems are discussed.
\end{abstract}

\section{Introduction}

Consider a set $X$ and a self-map $T:X\to X$. Denote by $(X,T)$ the discrete-time dynamical system whose evolution law $\N\times X\to X$ is given by $(n,x)\mapsto T^n(x)$.
Depending on the possible structures on $(X,T)$ -- for example when $T$ is a continuous self-map of a topological space $X$, or $T$ is an endomorphism of an Abelian group $X$ -- there exist various notions of entropy, which, roughly speaking, provide a tool to measure the ``disorder" or ``mixing" produced by the action of $T$ on $X$.

In this paper we are mainly concerned with the case when $X$ is a locally compact Abelian (briefly, LCA) group and $T$ is an endomorphism of $X$. By endomorphism of a topological group we always mean continuous endomorphism and with automorphism we intend a group automorphism which is also a homeomorphism. We denote by $\End(G)$  and $\Aut(G)$ respectively  the endomorphisms and the automorphisms  of a given LCA group $G$.  Moreover, we use the standard notations for the reals $\R$, the rationals $\Q$, the integers $\Z$, the natural numbers $\N$ and the positive integers $\N_+$.

\bigskip
In 1965 Adler, Konheim and McAndrew \cite{AKM} introduced the topological entropy for continuous self-maps of compact spaces. In 1971 Bowen \cite{B} gave a different definition of topological entropy for a uniformly continuous self-map $T$ of a metric space $X$, and an alternative description of this topological entropy when the space $X$ is endowed with a $T$-homogeneous measure. In 1974 Hood \cite{hood} noticed that Bowen's definition of topological entropy, as well as its equivalent description, could be extended respectively to uniformly continuous self-maps of uniform spaces and to uniformly continuous self-maps $T$ of locally compact uniform spaces $X$ with a $T$-homogeneous measure. 

Consider a compact space $X$ endowed with its unique admissible uniform structure $\mathcal U$; any continuous self-map $T: X \to X$ is uniformly continuous with respect to $\mathcal U$. By \cite[Corollary 2.14]{DSV}, for such $T$ the topological entropy defined by Hood coincides with the topological entropy defined by Adler, Konheim and McAndrew.

Consider now an LCA group $G$ with a Haar measure $\mu$, and an endomorphism $\phi:G\to G$. In particular, $G$ is a locally compact uniform space when endowed with its canonical left uniformity $\mathcal U$; furthermore, $\phi:(G,\mathcal U)\to (G,\mathcal U)$ is uniformly continuous, and $\mu$ is $\phi$-homogeneous. 
Hence, Hood's extension of Bowen's definition of topological entropy applies to such $G$ and $\phi$, and can be given in the following way (see \cite{DSV}). Denote by $\c(G)$ the family of compact neighborhoods of $0$ in $G$ ordered by inclusion. For every $K\in\c(G)$ and every positive integer $n$,
$$C_n(\phi,K)=K\cap\phi^{-1}K\cap\ldots\cap \phi^{-n+1}K$$
is the {\em $n$-th $\phi$-cotrajectory of $K$}. The \emph{topological entropy} of $\phi$ is
$$h_T(\phi)=\sup\left\{\limsup_{n\to\infty}\frac{-\log\mu(C_n(\phi,K))}{n}:K\in\c(G)\right\};$$
this definition is correct, as Claim \ref{haar} shows. 

The topological entropy is very well-understood on compact groups but only few results are known in the setting of LCA groups. For a comprehensive treatment of these aspects we refer to \cite{DSV}, \cite{WardLN1}, \cite{Walters} and \cite{WardLN}.

\medskip
In the final part of the paper \cite{AKM}, where the topological entropy was defined, also a notion of entropy for endomorphisms of discrete Abelian groups appears. It is based on the following concept of trajectory. Consider a discrete Abelian group $G$, an endomorphism $\phi:G\to G$, a non-empty subset $C$ of $G$, and a positive integer $n$; then
$$T_n(\phi,C)=C+\phi C +\ldots+\phi^{n-1}C$$ 
is the {\em $n$-th $\phi$-trajectory of $C$}. 

Cotrajectories make sense in arbitrary spaces while the concept of trajectory strongly depends on the algebraic operation of the group. This is the reason why we refer to the notions of entropy based on trajectories as {\em algebraic entropies}.

\smallskip
The definition of algebraic entropy of $\phi$ given in \cite{AKM} is
\begin{equation}\label{ent}
\ent(\phi)=\sup\left\{\lim_{n\to \infty}\frac{\log|T_n(\phi,C)|}{n}: \text{ $C$ is a finite subgroup of $G$}\right\} .
\end{equation}
Since a torsion-free Abelian group has no finite subgroups but the trivial one, $\ent(-)$ is always zero on endomorphisms of torsion-free discrete Abelian groups. 
So it is natural to consider
 $\ent(-)$ for endomorphisms of torsion discrete Abelian groups.

\smallskip
In 1974 Weiss \cite{W} studied the basic properties of $\ent(-)$ and connected it with the topological entropy of endomorphisms of profinite Abelian groups via the Pontryagin-Van Kampen duality in the following ``Bridge Theorem''.
For an LCA group $G$, let $\widehat G$ denote the dual group of $G$, endowed with its compact-open topology; moreover, for an endomorphism $\phi:G\to G$, let $\widehat\phi:\widehat G\to \widehat G$ be its dual endomorphism.

\medskip
\noindent {\bf Weiss Bridge Theorem.} {\em Let $G$ be a torsion discrete Abelian group and $\phi:G\to G$ an endomorphism.
Then $\ent(\phi)=h_T(\widehat\phi)$.}

\medskip
In 2009 Dikranjan, Goldsmith, Salce and Zanardo \cite{DGSZ} rediscovered this notion of algebraic entropy and deeply investigated it.
In particular, they characterized $\ent(-)$ as the unique function from the class of the endomorphisms of torsion discrete Abelian groups with target the non-negative reals $\R_{\geq0}$ plus $\infty$, satisfying five very natural axioms (see Section \ref{Appl}).

\medskip
As we already mentioned, $\ent(-)$ has the disadvantage of being trivial on endomorphisms of torsion-free discrete Abelian groups. In 1979 Peters \cite{Pet} proposed an alternative notion of algebraic entropy for automorphisms $\phi$ of discrete Abelian groups $G$, defining 
\begin{equation}\label{hp-def}
h_\infty(\phi)=\sup\left\{\lim_{n\to\infty}\frac{\log|T_n(\phi^{-1},C)|}{n}:C\text{ a finite subset of $G$}\right\}.
\end{equation}
This entropy takes the same values as $\ent(-)$ on automorphisms of torsion discrete Abelian groups but it may be non-zero also in the torsion-free case. 
In the same paper \cite{Pet}, Peters stated some general properties of $h_\infty(-)$ and proved the following Bridge Theorem for automorphisms of countable discrete Abelian groups.

\medskip
\noindent {\bf Peters Bridge Theorem.} {\em Let $G$ be a countable discrete Abelian group and $\phi:G\to G$ an automorphism.
Then $h_\infty(\phi)=h_T(\widehat\phi)$.}

\medskip
Recently Peters' definition of algebraic entropy was appropriately modified in \cite{DG}, replacing in \eqref{hp-def} the trajectories of $\phi^{-1}$ with the trajectories of $\phi$. In this way it was obtained a notion of algebraic entropy for all endomorphisms of discrete Abelian groups; we denote it by $h_A(-)$. Note that $h_A(-)$ has the same values of $h_\infty(-)$ when they both make sense, namely on automorphisms of discrete Abelian groups; moreover, $h_A(-)$ coincides with $\ent(-)$ on endomorphisms of torsion discrete Abelian groups.

\smallskip
In 1981 Peters \cite{Pet1} gave a further generalization of the entropy $h_\infty(-)$ he defined in \cite{Pet}. In fact, using the Haar measure, he introduced a notion of entropy for automorphisms of LCA groups as follows. Let $G$ be an LCA group with a Haar measure $\mu$, and let $\phi:G\to G$ be an automorphism. Then
$$h_\infty(\phi)=\sup\left\{\limsup_{n\to\infty}\frac{\log\mu(T_n(\phi^{-1},C))}{n}:C\in\c(G)\right\} .$$

In \cite{V} the second named author modified the definition of $h_\infty(-)$, in the same way as done in \cite{DG} for the discrete case, obtaining a new notion of algebraic entropy for endomorphisms of LCA groups.
We indicate it  by $h_A(-)$ as it coincides on endomorphisms of discrete Abelian groups with the already defined algebraic entropy from \cite{DG} (see Example \ref{ent=h_A}(b)). For the precise relation between $h_\infty(-)$ and $h_A(-)$ see Remark \ref{h_infty=h_A}.
With the same notations as above, the \emph{algebraic entropy of $\phi$ with respect to $C\in\c(G)$} is
$$H_A(\phi,C)=\limsup_{n\to\infty}\frac{\log\mu(T_n(\phi,C))}{n} ;$$
this definition is correct, as Claim \ref{haar} shows. The \emph{algebraic entropy of} $\phi$ is
$$h_A(\phi)=\sup\{H_A(\phi,C):C\in\c(G)\} .$$

\bigskip
We recall now another concept which plays a fundamental role in the present paper.
Let $N$ be a positive integer, let $f(X)=sX^N+a_1X^{N-1}+\ldots+a_N\in\C[X]$ be a non-constant polynomial with complex coefficients and let $\{\lambda_i:i=1,\ldots,N\}\subseteq\mathbb C$ be of all roots of $f(X)$ (we always assume the roots of a polynomial to be counted with their multiplicity); in particular, $f(X)=s\cdot\prod_{i=1}^N(X-\lambda_i)$. The Mahler measure of $f(X)$ was defined independently by Lehmer \cite{Lehmer} and Mahler \cite{Mahler} in two different equivalent forms.
Following Lehmer \cite{Lehmer} (see also \cite{Ward0}), the \emph{Mahler measure} of $f(X)$ is $M(f(X))=|s|\cdot \prod_{|\lambda_i|>1} |\lambda_i|.$ The \emph{(logarithmic) Mahler measure} of $f(X)$, that is the form that we use in this paper, is $$m(f(X))=\log M(f(X))=\log|s|+\sum_{|\lambda_i|>1}\log|\lambda_i|.$$
The Mahler measure plays an important role in number theory and arithmetic geometry; in particular, it is involved in the famous Lehmer Problem asking whether $\inf\{m(f(X)):f(X)\in\Z[X]\ \text{primitive}, m(f(X))>0\}$ is strictly positive (for example see  \cite{Ward0}, \cite{Hi} and \cite{M}, and for a survey on the Mahler measure of algebraic numbers see \cite{Smyth}).

\medskip
A rational $N\times N$ matrix $M$ has its monic characteristic polynomial $f(X)\in\Q[X]$; we say that $f(X)$ is the characteristic polynomial of $M$ over $\Q$. Let $s$ be the minimum positive integer such that $sf(X)\in\Z[X]$ (i.e., $s$ is the minimum positive common multiple of the denominators of the coefficients of $f(X)$); then we say that $p(X)=sf(X)$ is the characteristic polynomial of $M$ over $\Z$. 

Let $\phi:\Q^N\to \Q^N$ be an endomorphism and consider the $N\times N$ rational matrix $M_\phi$ representing the action of $\phi$ on $\Q^N$ with respect to the canonical base of $\Q^N$ over $\Q$. We call characteristic polynomial $p_\phi(X)$ of $\phi$ over $\Q$ (respectively, over $\Z$) the characteristic polynomial of $M_\phi$ over $\Q$ (respectively, over $\Z$); moreover, by eigenvalues of $\phi$ we mean the eigenvalues  of $M_\phi$. 

\medskip
We recall that a solenoid is a finite-dimensional connected compact Abelian group; so its dual group is a finite rank torsion-free discrete Abelian group, i.e., a subgroup of $\Q^N$ for some positive integer $N$. Moreover, $\widehat \Q^N$ is called full solenoid. With (full) solenoidal endomorphism we mean an endomorphism of a (full) solenoid.  

The action of a continuous endomorphism $\psi:\widehat \Q^N\to \widehat \Q^N$ is represented by an $N\times N$ rational matrix $M_\psi$, which is the transposed of the $N\times N$ rational matrix $M_{\phi}$ representing the action of the dual $\phi=\widehat \psi$ of $\psi$ on $\Q^N$.
The characteristic polynomial $p_\psi(X)$ of $\psi$ over $\Q$ (respectively, over $\Z$) is the the characteristic polynomial of $M_\psi$ over $\Q$ (respectively, over $\Z$); clearly $p_\psi(X)=p_\phi(X)$. Moreover, by eigenvalues of $\psi$ we mean the eigenvalues  of $M_\psi$.

\smallskip
One of the main results about the topological entropy for endomorphisms of compact Abelian groups is the following formula computing the topological entropy of endomorphisms of full solenoids in terms of the Mahler measure of their characteristic polynomial. 

\medskip
\noindent {\bf Yuzvinski Formula. }{\em
Let $N$ be a positive integer and $\phi:\widehat\Q^N\to \widehat\Q^N$ an endomorphism. Then
 $$h_T(\phi)= m(p_\phi(X)),$$
where $p_\phi(X)$ is the characteristic polynomial of $\phi$ over $\Z$.}

\medskip
This nice formula was obtained by Yuzvinski in \cite{Y}.  A different and more conceptual approach to the same result was given by Lind and Ward in \cite{LW}, where they also described in detail the history of the Yuzvinski Formula and related results.

\smallskip
The Yuzvinski Formula has a wide range of applications, as it allows the computation of the topological entropy of solenoidal automorphisms. As a consequence of the results of \cite{LW}, one can also obtain the already known Kolmogorov-Sinai Formula, stating that the topological entropy of a toral automorphism $\phi:\T^N\to \T^N$, which is described by an $N\times N$ matrix with integer coefficients, is $h_T(\phi)=\sum_{|\lambda_i|>1}\log|\lambda_i|$, where $\{\lambda_i:i=1,\ldots,N\}$ are the eigenvalues of $\phi$.

Moreover, the Yuzvinski Formula is one of the eight axioms in Stojanov's characterization of the topological entropy for endomorphisms of compact (not necessarily Abelian) groups obtained in \cite{S}.

\bigskip
The goal of this paper is to provide a completely self-contained proof of the following algebraic counterpart of the Yuzvinski Formula, computing the algebraic entropy of endomorphisms of finite dimensional rational vector spaces in terms of the Mahler measure of their characteristic polynomials.

\medskip
\noindent {\bf Algebraic Yuzvinski Formula. }{\em
Let $N$ be a positive integer and $\phi:\Q^N\to \Q^N$ an endomorphism. Then
 $$h_A(\phi)= m(p_\phi(X)),$$
where $p_\phi(X)$ is the characteristic polynomial of $\phi$ over $\Z$.}

\medskip
At this point the careful reader should have noticed that a proof of the Algebraic Yuzvinski Formula (at least for automorphisms) could be obtained from the classical Yuzvinski Formula applying Peters Bridge Theorem, since 
$h_A(-)$ and $h_\infty(-)$ coincide on automorphisms of discrete Abelian groups. Such approach is not justified, mainly in view of the highly sophisticated proof of Peters Bridge Theorem, heavily using convolutions.
Furthermore, as discussed in detail in \cite{DSV}, some of the proofs in \cite{Pet} and \cite{Pet1} contain some inaccuracy.

On the other hand, our direct proof of the Algebraic Yuzvinski Formula is motivated also by its application for the proof of the results in \cite{DG} about the algebraic entropy of endomorphisms of discrete Abelian groups, not last a generalized version of the Bridge Theorem given in \cite{DG-BT}, deduced from the Algebraic Yuzvinski Formula and Weiss Bridge Theorem, making no recourse to Peters Bridge Theorem. We refer to Section \ref{Appl} for a more exhaustive discussion on the applications of the Algebraic Yuzvinski Formula.

\smallskip
 It is worth mentioning that a first attempt to prove the Algebraic Yuzvinski Formula was done in \cite{Z-yuz}, where several partial results were obtained; some of them were recently used to prove the ``case zero'' of the Algebraic Yuzvinski Formula in \cite{DGZ} with arguments exclusively of linear algebra. Indeed, under the same notations and hypotheses as above, \cite[Corollary 1.4]{DGZ} shows that $h_A(\phi)=0$ if and only if $m(p_\phi(X))=0$.
Moreover, the algebraic counterpart of the Kolmogorov-Sinai Formula was established in the paper \cite{V}, where the algebraic entropy for endomorphisms of LCA groups was introduced.
Indeed, in \cite{V} the algebraic entropy of an endomorphism $\phi:\Z^N\to\Z^N$ was computed proving the Algebraic Kolmogorov-Sinai Formula, that is, $h_A(\phi)=\sum_{|\lambda_i|>1}\log|\lambda_i|$, where $\{\lambda_i:i=1,\ldots,N\}$ are the eigenvalues of $\phi$. The main idea for the proof of this result was to extend $\phi$ to an endomorphism $\Phi$ of $\R^N$ with the same algebraic entropy, and to use there the Haar measure to estimate the growth of the trajectories of $\Phi$ in order to compute its algebraic entropy.
These methods inspired the ones used in this paper for the proof of the Algebraic Yuzvinski Formula. We see in Section \ref{Appl} that it is possible to deduce the Algebraic Kolmogorov-Sinai Formula from the Algebraic Yuzvinski Formula. Finally, note that our proof is independent from the results of  \cite{DGZ}, \cite{V} and \cite{Z-yuz}.

\bigskip
Now we pass to describe the structure and the results of the present paper. Our proof of the Algebraic Yuzvinski Formula relies on many steps and takes most of the paper. On the other hand, several partial results are of their own interest. For example the forthcoming Facts A and C are used in \cite{DGSV}.

\smallskip
Section \ref{Background} is devoted to provide some basic examples and the necessary background on algebraic entropy, which is used in the rest of the paper.

\medskip
Denote by $\p$ the set of all prime numbers plus the symbol $\infty$. For every prime $p$ we denote by $\Q_p$ the field of $p$-adic numbers and by $|-|_p$ the $p$-adic norm on $\Q_p$; moreover, we let $\Q_\infty=\R$ and $|-|_\infty$ the usual absolute value on $\R$.
If $K_p$ is a finite extension of $\Q_p$, then we denote still by $|-|_p$ the unique extension of the $p$-adic norm to $K_p$.

Let $N$ be a positive integer, $p\in\p$ and $\phi_p:\Q_p^N\to \Q_p^N$ an endomorphism. Since $\phi_p$ is continuous, $\phi_p$ is $\Q_p$-linear and so its action on $\Q_p^N$, with respect to the canonical base of $\Q_p^N$ over $\Q_p$, is represented by an $N\times N$ matrix $M_{\phi_p}$ with coefficients in $\Q_p$. We call characteristic polynomial and eigenvalues of $\phi_p$ the characteristic polynomial and the eigenvalues of $M_{\phi_p}$.

\smallskip
Section \ref{p-adic} is dedicated to the following formula, that gives the value of the algebraic entropy of an endomorphism of $\Q_p^N$ in terms of its eigenvalues. 

\medskip
\noindent {\bf Fact A.}{ \em Let $N$ be a positive integer, $p\in\p$ and $\phi_p:\Q_p^N\to \Q_p^N$ an endomorphism. Then
$$h_A(\phi_p)=\sum_{|\lambda_i^{(p)}|_p>1}\log|\lambda_i^{(p)}|_p,$$
where $\{\lambda_i^{(p)}:i=1,\ldots, N\}$ are the eigenvalues of $\phi_p$, contained in some finite extension $K_p$ of $\Q_p$.}

\medskip
Section \ref{proof} contains the heart of the proof of the Algebraic Yuzvinski Formula. Let $N$ be a positive integer and $\phi:\Q^N\to \Q^N$ an endomorphism. For every $p\in\p$, $\Q$ can be identified with a subfield of $\Q_p$ and so $\phi$ induces an endomorphism $\phi_p:\Q_p^N\to \Q_p^N$ just extending the scalars, that is,
$$\phi_p=\phi\otimes_{\Q}id_{\Q_p}.$$
Since the algebraic entropy of each $\phi_p$ can be computed using the above Fact A, the idea in Section \ref{proof} is to express the algebraic entropy of $\phi$ in terms of the algebraic entropy of the $\phi_p$, with $p$ ranging in $\p$:

\medskip
\noindent {\bf Fact B. }{\em  Let $N$ be a positive integer, $\phi:\Q^N\to \Q^N$ an endomorphism and $\phi_p=\phi\otimes_{\Q}id_{\Q_p}$ for every $p\in\p$. Then $$h_A(\phi)=\sum_{p\in\p}h_A(\phi_p).$$}
\medskip

The main idea for the proof of the Algebraic Yuzvinski Formula relies in this step, that marks also the main difference between our approach to the Algebraic Yuzvinski Formula and the proof of the classical Yuzvinski Formula given by Lind and Ward in \cite{LW} (see Section \ref{LW-sub} for a detailed explanation).

\medskip
In Section \ref{conc} we consider the following known decomposition of the Mahler measure, similar to that obtained in Fact B for the algebraic entropy.

\medskip
\noindent{\bf Fact C. }{\em Let $N$ be a positive integer and $f(X)=sX^N+a_1 X^{N-1}+\ldots+a_N\in\Z[X]$ a primitive polynomial of degree $N$. For every $p\in\p$ let $\{\lambda^{(p)}_{i}:i=1,\ldots,N\}$ be the roots of $f(X)$, considered as an element of $\Q_p[X]$, in some finite extension $K_p$ of $\Q_p$.
For every prime $p$, $$\log|1/s|_p=\sum_{|\lambda^{(p)}_i|_p>1}\log|\lambda^{(p)}_i|_p.$$
Consequently, $$\log|s|=\sum_{p\in\p\setminus\{\infty\}} \sum_{|\lambda^{(p)}_i|_p>1}\log|\lambda^{(p)}_i|_p;$$
hence
$$m(f(X))= \sum_{p\in\p}\sum_{|\lambda^{(p)}_i|_p>1}\log|\lambda^{(p)}_i|_p.$$
}

\medskip
Fact C explains the meaning of the ``mysterious" term $\log|s|$ appearing in the definition of the Mahler measure of a primitive polynomial $f(X)=sX^N+a_1 X^{N-1}+\ldots+a_N\in\Z[X]$. It can be deduced from the main results of \cite{LW}, but we include a direct proof for reader's convenience.

\medskip
Facts A, B and C together give the Algebraic Yuzvinski Formula. In particular, as a consequence of Fact C and Fact A, we have the following

\medskip
\noindent{\bf Corollary. }{\em Let $N$ be a positive integer, $\phi:\Q^N\to \Q^N$ an endomorphism and $p_\phi(X)=sX^N+a_1 X^{N-1}+\ldots+a_N$ the characteristic polynomial of $\phi$ over $\Z$. Let $p$ be a prime and $\phi_p=\phi\otimes_\Q id_{\Q_p}$.
Then $$h_A(\phi_p)=\log|1/s|_p.$$}

\medskip
In the final Section \ref{Appl} we give several applications of the Algebraic Yuzvinski Formula and discuss some open problems. 
In particular, we describe the fundamental results from \cite{DG} and \cite{DG1} about the algebraic entropy of endomorphisms of discrete Abelian groups, where the Algebraic Yuzvinski Formula applies. 
As the proof of the Algebraic Yuzvinski Formula is now self-contained and does not depend on its topological counterpart, also the treatment of the algebraic entropy in \cite{DG} and \cite{DG1} becomes independent from the results on topological entropy.  On the other hand, it is again the Algebraic Yuzvinski Formula that allows one to prove the fundamental connection between the algebraic entropy of endomorphisms of discrete Abelian groups and the topological entropy of endomorphisms of compact Abelian groups, namely, a general version of the Bridge Theorem proved in \cite{DG-BT}.

Finally, we give some open problems related to the applications, with the aim to understand whether and how the fundamental results for the algebraic entropy of endomorphisms of discrete Abelian groups can be extended to the general case of endomorphisms of LCA groups.

\subsection*{Aknowledgements}

We warmly thank Professor Dikranjan for reading at least two preliminary versions of the present paper and for his very useful comments and suggestions.
His encouragement is surely one of the ingredients of our proof of the Algebraic Yuzvinski Formula.

We are very grateful to Phu Chug for his careful reading of our manuscript and also for pointing out a mistake in an earlier version of this paper.

\section{Background on algebraic entropy}\label{Background}

In this section we recall some of the basic properties of the algebraic entropy proved in \cite{V}. These are useful tools in the computation of the algebraic entropy that we apply in the following sections. Note that in the discrete case one can prove stronger properties than in the general case -- see \cite{DG} for a complete study of the algebraic entropy of endomorphisms of discrete Abelian groups.

\subsection{Haar measure and modulus}  

We start recalling an easy result showing in particular that the value of the algebraic entropy does not depend on the choice of the Haar measure.  We use this fact each time we need to choose a Haar measure on a LCA group.

\begin{claim}\label{haar}{\em \cite[Lemma 2.1]{V}}
Let $G$ be an LCA group and $\mu$ a Haar measure on $G$.  If $\{A_n:n\in\N\}$ is a family of measurable subsets of $G$, then the quantity $l=\limsup_{n\to \infty}\frac{\log\mu(A_n)}{n}$ does not depend on the choice of $\mu$.
\end{claim}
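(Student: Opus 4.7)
The plan is to invoke the uniqueness of the Haar measure on an LCA group up to a positive multiplicative constant. Specifically, if $\mu$ and $\mu'$ are two Haar measures on $G$, there exists $c>0$ such that $\mu'=c\mu$; this is a standard classical fact about LCA groups, so I would simply cite it rather than reprove it.

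Once this is available, the argument reduces to a one-line computation. For each measurable $A_n$ one has $\mu'(A_n)=c\mu(A_n)$, hence
\[
\frac{\log\mu'(A_n)}{n}=\frac{\log c}{n}+\frac{\log\mu(A_n)}{n}.
\]
Since $\log c$ is a fixed real number, $\frac{\log c}{n}\to 0$, and therefore taking $\limsup$ on both sides yields
\[
\limsup_{n\to\infty}\frac{\log\mu'(A_n)}{n}=\limsup_{n\to\infty}\frac{\log\mu(A_n)}{n}=l,
\]
which is exactly the required independence from the choice of Haar measure.

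The only minor points to check are the degenerate cases $\mu(A_n)=0$ and $\mu(A_n)=\infty$, in which $\log\mu(A_n)$ takes the value $-\infty$ or $+\infty$. But because $c>0$, the relation $\mu(A_n)=0$ (resp.\ $=\infty$) holds if and only if $\mu'(A_n)=0$ (resp.\ $=\infty$), so these exceptional terms occur at exactly the same indices $n$ under either measure, and the $\limsup$ is unaffected.

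There is no real obstacle here: the whole content of the claim is the elementary observation that an additive constant in the numerator is killed after dividing by $n$, and the substantive input, uniqueness of Haar measure up to scaling, is quoted from the standard theory of LCA groups.
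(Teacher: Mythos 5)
Your proposal is correct and matches the paper's approach exactly: the paper states that the claim "is a direct consequence of the fact that two different Haar measures on an LCA group are one multiple of the other," which is precisely the uniqueness-up-to-scaling argument you use. Your elaboration of the one-line computation and your note on the degenerate $0/\infty$ cases are a reasonable fleshing-out of what the paper leaves implicit.
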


The proof of this claim is a direct consequence of the fact that two different Haar measures on an LCA group are one multiple of the other.

\smallskip
The following are the first and fundamental examples. In particular, item (a) follows directly from Lemma \ref{monotonia}(1) below.

\begin{example}\label{ent=h_A}
\begin{enumerate}[\rm (a)]
\item If $G$ is a compact Abelian group, then $h_A(\phi)=0$ for every $\phi\in\End(G)$.
\item If $G$ is a discrete Abelian group, we can choose $\mu$ to be the cardinality of the subsets of $G$. With this choice, 
for endomorphisms of discrete Abelian groups, {\em our definition of $h_A(-)$ is exactly the definition of algebraic entropy given in \cite{DG}}.
\end{enumerate}
\end{example}

Denote by $\R_+$ the multiplicative group of positive reals. Fixed an LCA group $G$ and a Haar measure $\mu$ on $G$, the {\em modulus} is a group homomorphism 
$$\mod_G:\Aut(G)\to\R_+ , \text{ such that }\ \ \ \mu(\alpha E)=\mod_G(\alpha)\mu(E)$$ for every $\alpha\in \Aut(G)$ and every measurable subset $E$ of $G$ (see  \cite[(15.26) pag. 208]{HR} for the proof of the existence of the modulus). 

\medskip
The first three examples below are well-known, for the last two we refer to \cite[Chapter 1, \S2]{We}.

\begin{example}\label{example}
\begin{enumerate}[\rm (a)]
\item If $\alpha:\Z^N\to \Z^N$ is an automorphism, then $\mod_{\Z^N}(\alpha)=1$. More generally, $\mod_G \equiv 1$ if $G$ is a compact or discrete Abelian group.
\item If $\alpha:\R^N\to \R^N$ is an  automorphism, then $\mod_{\R^N}(\alpha)=|\det(\alpha)|$.
\item If $p$ is a prime and $\alpha:\Q_p^N\to \Q_p^N$ an automorphism, then $\mod_{\Q_p^N}(\alpha)=|\det(\alpha)|_p$.
\item If $\alpha:\C^N\to \C^N$ is a $\C$-linear automorphism, then $\mod_{\C^N}(\alpha)=|\det(\alpha)|^2$.
\item If $p$ is a prime, $K_p$ is a finite extension of $\Q_p$ of degree $d_p$ and $\alpha:K_p^N\to K_p^N$ is a $K_p$-linear automorphism, then $\mod_{K_p^N}(\alpha)=|\det(\alpha)|^{d_p}_p$.
\end{enumerate}
\end{example}

\subsection{Basic properties of algebraic entropy} \label{Bp-ae}

We start with the monotonicity property of $H_A(\phi,-)$ given in item \1 of the following lemma. Moreover, item \2 shows in particular that in order to compute the algebraic entropy of an endomorphism of an LCA group $G$, it suffices to consider a cofinal subfamily $\c'$ of $\c(G)$. This property is applied in crucial steps of the proof of the Algebraic Yuzvinski Formula. We recall that, given a poset $(S,\leq)$, a subset $T\subseteq S$ is said to be {\em cofinal} if, for every $s\in S$ there exists $t\in T$ such that $s\leq t$.

\begin{lemma}\label{monotonia}
Let $G$ be an LCA group and $\phi\in\End(G)$.
\begin{enumerate}[\rm (1)]
\item If $C,C'\in\c(G)$ and $C\subseteq C'$, then $H_A(\phi,C)\leq H_A(\phi,C')$.
\item If $\c_1\subseteq \c_2\subseteq \c(G)$ and $\c_1$ is  cofinal in $\c_2$, then 
$$\sup\{H_A(\phi,K):K\in\c_1\}=\sup\{H_A(\phi,K):K\in\c_2\}.$$
\item If $N$ is an open $\phi$-invariant subgroup of $G$, then $H_A(\phi\restriction_N,C)= H_A(\phi,C)$
for every $C\in\c(N)$. In particular, $h_A(\phi\restriction_N)\leq h_A(\phi)$.
\end{enumerate}
\end{lemma}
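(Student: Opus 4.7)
My plan is to handle the three items separately, with \textbf{(1)} giving the basic monotonicity that drives \textbf{(2)} as a cofinality argument and \textbf{(3)} reducing via a Haar-measure compatibility check for open subgroups. Throughout I would use Claim \ref{haar}, so that only the \emph{existence} of a Haar measure matters, not its normalization.

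\textbf{Proof of (1).} The key observation is that the $n$-th $\phi$-trajectory depends monotonically on $C$: since $C\subseteq C'$ entails $\phi^{k}C\subseteq\phi^{k}C'$ for every $k\geq 0$, one obtains
\[
T_n(\phi,C)=C+\phi C+\cdots+\phi^{n-1}C \;\subseteq\; C'+\phi C'+\cdots+\phi^{n-1}C'=T_n(\phi,C').
\]
By monotonicity of the Haar measure, $\mu(T_n(\phi,C))\leq\mu(T_n(\phi,C'))$, and taking $\log$, dividing by $n$ and passing to $\limsup_{n\to\infty}$ yields $H_A(\phi,C)\leq H_A(\phi,C')$.

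\textbf{Proof of (2).} The inequality $\sup\{H_A(\phi,K):K\in\c_1\}\leq\sup\{H_A(\phi,K):K\in\c_2\}$ is immediate from $\c_1\subseteq\c_2$. For the reverse, fix $K\in\c_2$; by cofinality of $\c_1$ in $\c_2$ there exists $K'\in\c_1$ with $K\subseteq K'$, and item (1) gives $H_A(\phi,K)\leq H_A(\phi,K')\leq\sup\{H_A(\phi,L):L\in\c_1\}$. Taking the supremum over $K\in\c_2$ closes the argument.

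\textbf{Proof of (3).} Since $N$ is open in $G$, every compact neighbourhood of $0$ in $N$ is already a compact neighbourhood of $0$ in $G$, so $\c(N)\subseteq\c(G)$, and $\phi$-invariance of $N$ guarantees $\phi^{k}C\subseteq N$ for all $k\geq 0$ and $C\in\c(N)$. In particular $T_n(\phi,C)=T_n(\phi\restriction_N,C)$ and this set is a subset of $N$. The crucial step is to compare the measure of this set computed in $G$ against its measure computed in $N$: since $N$ is an open subgroup of the LCA group $G$, if $\mu_G$ is a Haar measure on $G$ then its restriction to the Borel subsets of $N$ is a Haar measure on $N$ (this is standard, e.g.\ from \cite{HR}). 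Hence, by Claim \ref{haar}, the value $\limsup_{n\to\infty}\frac{\log\mu(T_n(\phi,C))}{n}$ does not depend on whether we measure with $\mu_G$ or with a Haar measure on $N$, and therefore $H_A(\phi\restriction_N,C)=H_A(\phi,C)$ for every $C\in\c(N)$. Since $\c(N)\subseteq\c(G)$, taking the supremum we conclude $h_A(\phi\restriction_N)=\sup_{C\in\c(N)}H_A(\phi,C)\leq h_A(\phi)$.

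\textbf{Expected obstacle.} The only non-formal step is the identification of $\mu_G\restriction_N$ with a Haar measure on $N$ in (3); this is where one really uses that $N$ is \emph{open} rather than merely closed in $G$ (for a closed-but-not-open subgroup the restricted measure would generally be zero on relatively compact sets of $N$). Everything else is a direct unravelling of definitions combined with the set-theoretic fact that the trajectory construction is monotone in its input set.
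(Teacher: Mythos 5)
Your proof is correct and follows the same route as the paper's: item (1) is a direct unravelling of the definition via monotonicity of the trajectory construction and the Haar measure, item (2) is the standard cofinality argument built on (1), and item (3) rests on the observation that restricting a Haar measure of $G$ to the Borel sets of the open subgroup $N$ yields a Haar measure on $N$. The paper's proof of (1) and (2) is stated more tersely ("comes directly from the definitions"), but the substance is identical to what you wrote.
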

\begin{proof}
(1) comes directly from the definitions and (2) follows from (1). To prove (3), consider a Haar measure $\mu$ on $G$. Since $N$ is open in $G$, the restriction of $\mu$ to the Borel subsets of $N$ induces a Haar measure $\mu'$ on $N$. With this choice of the measures it is easy to see that $H_A(\phi\restriction_N,C)= H_A(\phi,C)$ for every $C\in\c(N)$.
\end{proof}

Item (3) of the above lemma shows that $h_A(-)$ is monotone under restriction to open invariant subgroups, and so it implies in particular the known monotonicity of $h_A(-)$ under restriction to invariant subgroups of discrete Abelian groups (see also Problem \ref{pb-bp}). 

\medskip
As a corollary of Lemma \ref{monotonia}(2) we see that the algebraic entropy $h_A(-)$ coincides with the algebraic entropy $\ent(-)$ introduced by Weiss on endomorphisms of discrete Abelian groups. 

\begin{corollary}\label{hA=ent}
Let $G$ be a torsion discrete Abelian group and $\phi\in\End(G)$. Then $h_A(\phi)=\ent(\phi)$.
\end{corollary}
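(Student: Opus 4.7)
The plan is to use Lemma \ref{monotonia}(2) to reduce the supremum defining $h_A(\phi)$ to a supremum over finite subgroups of $G$, and then verify that on finite subgroups the limsup in $H_A(\phi,-)$ coincides with the limit used in the definition of $\ent(\phi)$.

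First I would set the framework: since $G$ is discrete, by Example \ref{ent=h_A}(b) we may take $\mu$ to be the counting measure, so $\mu(T_n(\phi,C))=|T_n(\phi,C)|$ for every nonempty finite $C\subseteq G$. Moreover, a compact neighborhood of $0$ in a discrete group is exactly a finite subset containing $0$, so $\c(G)=\{C\subseteq G:C\text{ finite}, 0\in C\}$. Let $\mathcal F(G)$ denote the family of finite subgroups of $G$; clearly $\mathcal F(G)\subseteq\c(G)$.

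Next I would verify the cofinality of $\mathcal F(G)$ in $\c(G)$. Given $C\in\c(G)$, the subgroup $\langle C\rangle$ generated by $C$ is finitely generated; since $G$ is torsion, $\langle C\rangle$ is a finitely generated torsion Abelian group, hence finite. Thus $\langle C\rangle\in\mathcal F(G)$ and $C\subseteq\langle C\rangle$, proving cofinality. By Lemma \ref{monotonia}(2),
\[
h_A(\phi)=\sup\{H_A(\phi,F):F\in\mathcal F(G)\}.
\]

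The last step is to show that for $F\in\mathcal F(G)$ the limsup actually is a limit, matching the formula defining $\ent(\phi)$. Since $F$ is a subgroup, each $\phi^iF$ is a subgroup and hence $T_n(\phi,F)=F+\phi F+\cdots+\phi^{n-1}F$ is a finite subgroup of $G$. Using the identity $T_{n+m}(\phi,F)=T_n(\phi,F)+\phi^n T_m(\phi,F)$ together with the standard inequality $|H+K|\le|H|\cdot|K|$ for finite subgroups $H,K$ (since $|H+K|\cdot|H\cap K|=|H|\cdot|K|$), the sequence $a_n=\log|T_n(\phi,F)|$ is subadditive. Fekete's lemma then yields $\lim_{n\to\infty}a_n/n=\inf_n a_n/n$, so $H_A(\phi,F)=\lim_{n\to\infty}\frac{\log|T_n(\phi,F)|}{n}$. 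Combining this with the previous step gives $h_A(\phi)=\ent(\phi)$.

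No step is really an obstacle; the only substantive point is the cofinality of finite subgroups in $\c(G)$, which is exactly where the torsion hypothesis enters, and without which the argument would collapse (e.g.\ in $\Z$ the subgroup generated by $\{0,1\}$ is infinite).
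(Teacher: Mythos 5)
Your proposal is correct and follows essentially the same route as the paper: fix the counting measure via Claim~\ref{haar}, use the torsion hypothesis to show finite subgroups are cofinal in $\c(G)$, and apply Lemma~\ref{monotonia}(2). The only difference is that where the paper silently replaces the $\limsup$ by a $\lim$ (relying on Example~\ref{h=h}, i.e.\ on \cite[Corollary~2.2]{DG}), you supply a short self-contained proof of convergence via subadditivity and Fekete's lemma, which is a minor but welcome detail.
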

\begin{proof}
For every compact (i.e., finite) $C\in\c(G)$, the subgroup $\langle C\rangle$ generated by $C$ is still finite. Hence the family of finite subgroups of $G$ is cofinal in $\c(G)$. By Claim \ref{haar} we can choose on $G$ the Haar measure given by the cardinality of subsets. We can now apply Lemma \ref{monotonia}(2) to obtain that
$$h_A(\phi)=\sup\left\{\lim_{n\to \infty}\frac{\log|T_n(\phi,C)|}{n}:C\text{ finite subgroup of $G$}\right\},$$
which is exactly the definition of $\ent(\phi)$.
\end{proof}

Let  $G$ be an LCA group, $\mu$ a Haar measure on $G$, $\phi\in\End(G)$ and $C\in \c(G)$. If the sequence $\left\{\frac{\log\mu(T_n(\phi,C))}{n}:n\in\N_+\right\}$ is convergent, we say that {\em the $\phi$-trajectory of $C$ converges}. In particular, if the $\phi$-trajectory of $C$ converges, then the $\limsup$ in the definition of $H_A(\phi,C)$ becomes a limit:
$$H_A(\phi,C)=\lim_{n\to\infty}\frac{\log\mu(T_n(\phi,C))}{n}.$$

\begin{example}\label{h=h}
If $G$ is a compact or discrete Abelian group, then the $\phi$-trajectory of $C$ converges for every 
$\phi\in\End(G)$ and $C\in\c(G)$. Indeed, the compact case is obvious as the values of the measure form a bounded subset of the reals (so the above sequence always converges to $0$). For the discrete case we refer to \cite[Corollary 2.2]{DG}.
\end{example}

The following proposition collects some properties proved in \cite[Proposition 2.7, Corollary 2.9]{V}. We remark that item (2) is stated here in a slightly stronger form. We do not give its proof, as it is analogous to the one of the forthcoming Proposition \ref{prop'}(2).

For $G$, $G'$ LCA groups and $\phi\in\End(G)$, $\phi'\in\End(G')$ we say that $\phi$ and $\phi'$ are \emph{conjugated} by a topological isomorphism $\alpha:G\to G'$ if $\phi=\alpha^{-1}\phi'\alpha$.

\begin{proposition}\label{prop}
Let $G$ and $G'$ be LCA groups, $\phi\in\End(G)$ and $\phi'\in\End(G')$. 
\begin{enumerate}[\rm (1)] 
\item If $\phi$ and $\phi'$ are conjugated by a topological isomorphism $\alpha : G \to G'$, then 
$H_A(\phi,C)=H_A(\phi',\alpha C)$
for every $C\in\c(G)$. In particular, $h_A(\phi) = h_A(\phi')$. 
\item For every $C\in\c(G)$ and $C'\in \c(G')$,
\begin{equation}\label{ATPP}
H_A(\phi\times\phi',C\times C')\leq H_A(\phi,C)+H_A(\phi',C').
\end{equation}
In particular, $h_A(\phi \times \phi')\leq h_A(\phi)+h_A(\phi')$. Furthermore, if the $\phi$-trajectory of $C$ converges, then equality holds in \eqref{ATPP}.
 \item Let $\Phi=\phi\times\phi:G\times G\to G\times G$ and $C\in\c(G)$. Then 
\begin{equation*}H_A(\Phi,C\times C)= 2H_A(\phi,C) .\end{equation*}
In particular, $h_A(\Phi)=2 h_A(\phi).$
\item If $\phi\in\Aut(G)$, then $h_A(\phi^{-1} ) = h_A(\phi)-\log (\mod_G(\phi))$. 
\end{enumerate}
\end{proposition}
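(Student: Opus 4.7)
My plan is to reduce all four items to explicit identifications of the $n$-th trajectory followed by standard manipulations of Haar measures. By Claim \ref{haar} the choice of Haar measure is immaterial for the $\limsup$ defining $H_A(\phi,C)$, so I have full freedom to fix it most conveniently in each case.

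For (1), I would fix a Haar measure $\mu'$ on $G'$ and set $\mu(E)=\mu'(\alpha E)$; this is easily seen to be a Haar measure on $G$. The relation $\phi=\alpha^{-1}\phi'\alpha$ iterates to $\phi^k=\alpha^{-1}(\phi')^k\alpha$, whence $T_n(\phi,C)=\alpha^{-1}T_n(\phi',\alpha C)$ and therefore $\mu(T_n(\phi,C))=\mu'(T_n(\phi',\alpha C))$; dividing by $n$ after taking logarithms and passing to $\limsup$ yields (1). Item (3) is a special case of the product-trajectory identity: $(\phi\times\phi)^k=\phi^k\times\phi^k$ gives $T_n(\Phi,C\times C)=T_n(\phi,C)\times T_n(\phi,C)$, so under the product Haar measure the $n$-th trajectory has measure $\mu(T_n(\phi,C))^2$, and $\limsup(2a_n)=2\limsup a_n$ immediately produces $2H_A(\phi,C)$, with no convergence hypothesis needed. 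For (4) the key algebraic observation is
$$T_n(\phi^{-1},C)=\phi^{-(n-1)}T_n(\phi,C),$$
obtained by factoring $\phi^{-(n-1)}$ out of $C+\phi^{-1}C+\ldots+\phi^{-(n-1)}C$. By definition of the modulus, $\mu(\phi^{-(n-1)}E)=\mod_G(\phi)^{-(n-1)}\mu(E)$, so
$$\frac{\log\mu(T_n(\phi^{-1},C))}{n}=-\frac{n-1}{n}\log\mod_G(\phi)+\frac{\log\mu(T_n(\phi,C))}{n};$$
taking $\limsup$ in $n$ and then the supremum over $C\in\c(G)$ (which is the same collection for $\phi$ and $\phi^{-1}$) yields $h_A(\phi^{-1})=h_A(\phi)-\log\mod_G(\phi)$.

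The most delicate item is (2), which the authors defer precisely because of the equality clause. From $(\phi\times\phi')^k=\phi^k\times(\phi')^k$ one obtains $T_n(\phi\times\phi',C\times C')=T_n(\phi,C)\times T_n(\phi',C')$, and Fubini applied to the product Haar measure $\mu\times\mu'$ on $G\times G'$ turns this into the additive identity $\log(\mu\times\mu')(T_n(\phi\times\phi',C\times C'))=\log\mu(T_n(\phi,C))+\log\mu'(T_n(\phi',C'))$. Subadditivity of $\limsup$ delivers the inequality. The hard point, and the main obstacle, is the improvement to equality when the $\phi$-trajectory of $C$ converges: in that case the first summand divided by $n$ has an honest limit, and one uses the classical fact that $\limsup(a_n+b_n)=\lim a_n+\limsup b_n$ whenever $(a_n)$ is convergent. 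This is the only place where the extra convergence hypothesis is genuinely used, and it is precisely what allows subadditivity to be upgraded to equality.
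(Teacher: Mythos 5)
Your proposal is correct and takes essentially the same approach as the paper. The paper omits the proof, referring to \cite{V} and noting that item (2) is analogous to Proposition~\ref{prop'}(2); your arguments for (1)--(3) match exactly the proofs given there for Proposition~\ref{prop'} (the pushed-forward Haar measure for conjugation, the product-trajectory identity $T_n(\phi\times\phi',C\times C')=T_n(\phi,C)\times T_n(\phi',C')$ with the product measure, and the $\limsup$ subadditivity/convergence argument), while your identity $T_n(\phi^{-1},C)=\phi^{-(n-1)}T_n(\phi,C)$ combined with the modulus is the natural route for (4) and is consistent with the modulus computation recalled in Lemma~\ref{base-leq}(2).
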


\begin{remark}\label{h_infty=h_A}
Let $G$ be an LCA group and $\phi\in\Aut(G)$. As we mentioned in the Introduction, Peters' algebraic entropy of $\phi$ is $h_\infty(\phi)=h_A(\phi^{-1})$ by definition. Hence, by Proposition \ref{prop}(4) we obtain that $$h_\infty(\phi)=h_A(\phi)-\log(\mod_G(\phi)).$$ In particular, in view of Example \ref{example}(a), $h_\infty(\phi)=h_A(\phi)$ whenever $G$ is discrete or compact.
\end{remark}

The following lemma shows the continuity of the algebraic entropy for direct limits of open invariant subgroups.

\begin{lemma}\label{upp-cont}
Let $G$ be an LCA group, $\phi\in \End(G)$, and suppose $\{ N_i  :  i \in I\}$ to be a directed system of open $\phi$-invariant subgroups of $G$ such that $G=\varinjlim N_i$. Then $h_A(\phi)=\sup_{i\in I}h_A(\phi\restriction_{N_i})$.
\end{lemma}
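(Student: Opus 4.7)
The plan is to establish the two inequalities separately, with both directions resting on Lemma \ref{monotonia}(3). The inequality $\sup_{i\in I} h_A(\phi\restriction_{N_i}) \leq h_A(\phi)$ is immediate: each $N_i$ is an open $\phi$-invariant subgroup, so Lemma \ref{monotonia}(3) gives $h_A(\phi\restriction_{N_i}) \leq h_A(\phi)$ for every $i\in I$, and taking the supremum over $i$ yields the claim.

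For the reverse inequality $h_A(\phi)\leq \sup_{i\in I} h_A(\phi\restriction_{N_i})$, I would use the definition $h_A(\phi)=\sup\{H_A(\phi,C):C\in\c(G)\}$ and show that every $C\in\c(G)$ is already ``captured'' by some $N_i$. The key observation is that, since $G=\varinjlim N_i$ in the category of topological groups amounts (as each $N_i$ is open in $G$) to $G=\bigcup_{i\in I}N_i$ as a set with compatible topology, the family $\{N_i:i\in I\}$ is an open cover of the compact set $C$. Compactness yields a finite subcover $N_{i_1},\dots,N_{i_k}$ of $C$, and directedness of $I$ then supplies $i\in I$ with $N_{i_1}\cup\dots\cup N_{i_k}\subseteq N_i$, so that $C\subseteq N_i$.

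It remains to check that $C\in\c(N_i)$. This is automatic because $N_i$ is open in $G$: the subspace topology induced on $N_i$ coincides with its topology as an LCA group, any open neighborhood of $0$ in $G$ contained in $C$ remains open when intersected with $N_i$, and $C$ itself is compact independently of which ambient group we place it in. Consequently, Lemma \ref{monotonia}(3) applies to give
\[
H_A(\phi,C)=H_A(\phi\restriction_{N_i},C)\leq h_A(\phi\restriction_{N_i})\leq \sup_{j\in I}h_A(\phi\restriction_{N_j}).
\]
Taking the supremum over $C\in\c(G)$ closes the argument.

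The proof is essentially a two-line exercise once the set-theoretic structure of the direct limit is pinned down, so there is no serious obstacle. The only point requiring a moment's care is the compactness-plus-directedness step that confines an arbitrary $C\in\c(G)$ inside a single $N_i$; this is where the assumption that the subgroups $N_i$ be \emph{open} (rather than merely closed) is essential, since without openness the $N_i$ would not form an open cover of $C$ and we would lose the reduction to $\c(N_i)$.
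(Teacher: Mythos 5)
Your proof is correct and follows essentially the same route as the paper's: both directions rest on Lemma \ref{monotonia}(3), and the key step of confining a given $C\in\c(G)$ inside a single $N_i$ via compactness plus directedness is exactly the argument the paper uses (the paper phrases the finite-subcover step as $K=\bigcup_{i\in F}(K\cap N_i)$ rather than as a finite subcover of an open cover, but these are the same observation).
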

\begin{proof}
By Lemma \ref{monotonia}(3), we have that $h_A(\phi)\geq \sup_{i\in I}h_A(\phi\restriction_{N_i})$. On the other hand, consider $K\in\c(G)$. Then $K=\bigcup_{i\in I}(K\cap N_i)$ and so, by compactness, there exists a finite subset $F\subseteq I$ such that $K=\bigcup_{i\in F}(K\cap N_i)$. Furthermore, being $\{ N_i  :  i \in I\}$ directed, there exists $N\in \{ N_i  :  i \in I\}$ such that $\sum_{i\in F}N_i\subseteq N$ and so $K=\bigcup_{i\in I}(K\cap N_i)\subseteq K\cap N\subseteq N$. To conclude, notice that
$$H_A(\phi,K)= H_A(\phi\restriction_{N}, K)\leq h_A(\phi\restriction_N)\leq \sup_{i\in I}h_A(\phi\restriction_{N_i}),$$
where the first equality follows by Lemma \ref{monotonia}(3).
\end{proof}

We conclude this section with an example of computation of the algebraic entropy that is used later on. Note that it can be deduced from \cite[Example 1.9]{DGSZ} and it is proved in a slightly different way in \cite[Example 2.10]{DG}. 

\begin{example}\label{bernoulli}
Let $K$ be a discrete Abelian group and $G=\bigoplus_{n\in\N}K_n$, where each $K_n=K$. The {\em right Bernoulli shift} is the endomorphism of $G$ defined by
$$\beta_{K}:G\to G\  \text{such that}\  (x_0,x_1,\ldots,x_n,\ldots)\mapsto (0,x_0,\ldots,x_n,\ldots).$$ 
Then $h_A(\beta_K)=\log|K|$, with the usual convention that $\log|K|=\infty$ if $K$ is infinite.

Indeed, fix on $G$ the Haar measure given by the cardinality of subsets, and let $F\in\c(K_0)$. An easy computation shows that $|T_n(\beta_K,F)|=|F\times\beta_K(F)\times\ldots\times \beta^{n-1}_K(F)|=|F|^{n}$, hence $H_A(\beta_K,F)=\log|F|$, and so
$$h_A(\beta_K)\geq \sup\{H_A(\beta_K,F):F\in\c(K_0)\}=\sup\{\log|F|:F\in\c(K_0)\}=\log|K|.$$
If $K$ is infinite, then $\log|K|=\infty$ and the proof is concluded. So assume $|K|$ to be a positive integer.
The family of subgroups of the form $\overline{K}_i=K_0\oplus\ldots\oplus K_i$, with  $i\in\N$, is cofinal in $\c(G)$ and Lemma \ref{monotonia}\2 gives
$$h_A(\beta_K)=\sup\{H_A(\beta_K,\overline{K}_i):i\in\N\}.$$
Now $T_n(\beta_K,\overline K_i)=K_0\oplus \ldots\oplus K_{i+n-1}$. Therefore, $|T_n(\beta_K,\overline K_i)|=|K|^{i+n}$ and so
$$H_A(\beta_i,\overline K_i)=\lim_{n\to\infty}\frac{\log|K|^{i+n}}{n}=\log|K|.$$
\end{example}

\subsection{The minor trajectory} 

We now recall a technique from \cite{V}, partially modifying it, that allows us to find convenient lower bounds for the algebraic entropy. 
Let $G$ be an LCA group, $\mu$ a Haar measure on $G$ and $\phi\in\End(G)$. For every $C\in\c(G)$,
$$T_n^{\leq}(\phi,C)=C+\phi^{n-1}C$$
is the {\em minor $n$-th $\phi$-trajectory of $C$}. Furthermore, let $$H^{\leq}(\phi,C)=\limsup_{n\to \infty}\frac{\log\mu(T^{\leq}(\phi,C))}{n}.$$
In view of Claim \ref{haar}, the value of $H^{\le}(\phi,C)$ does not depend on the choice of $\mu$. 

\smallskip
The following example shows that the minor trajectory is of no help in the discrete case. 

\begin{example}
Let $G$ be a discrete Abelian group, $\phi\in\End(G)$ and $C\in\c(G)$. Then 
$$H^{\leq}(\phi,C)=\limsup_{n\to \infty}\frac{\log|C+\phi^{n-1}C|}{n}\leq \limsup_{n\to \infty}\frac{\log|C|+\log|\phi^{n-1}C|}{n}\leq \limsup_{n\to \infty}\frac{2\log|C|}{n}=0 .$$
This shows that $H^\leq(\phi,-)\equiv 0$ in the discrete case.
\end{example}

If the sequence $\left\{\frac{\log\mu(T_n^{\leq}(\phi,C))}{n}:n\in\N_+\right\}$ converges, then we say that \emph{the minor $\phi$-trajectory of $C$ converges}.

\smallskip
Since our definitions are a modification of those in \cite{V}, we give a proof of the following two results, which are the counterparts of \cite[Lemma 2.10]{V} and \cite[Proposition 2.11]{V} respectively.

\begin{lemma}\label{base-leq}
Let $G$ be an LCA group, $\phi\in\End(G)$ and $C\in\c(G)$. Then:
\begin{enumerate}[\rm (1)]
\item $0\leq H^{\leq}(\phi,C)\leq H_A(\phi,C)\leq h_A(\phi)$;
\item $\log(\mod_G(\phi))\leq H^{\leq}(\phi,C)$, if $\phi$ is an automorphism.
\end{enumerate}
\end{lemma}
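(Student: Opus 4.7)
The plan is to verify each inequality directly from the definitions, exploiting the facts that $C\in\c(G)$ is a compact neighborhood of $0$ (so $0\in C$ and $\mu(C)>0$ for any Haar measure $\mu$) and that $\phi(0)=0$.

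For part (1), the chain has three links. The rightmost inequality $H_A(\phi,C)\leq h_A(\phi)$ is immediate from the definition of $h_A(\phi)$ as the supremum of $H_A(\phi,K)$ over $K\in\c(G)$. The middle inequality comes from the elementary set inclusion
\[
T_n^{\leq}(\phi,C)=C+\phi^{n-1}C\ \subseteq\ C+\phi C+\cdots+\phi^{n-1}C=T_n(\phi,C),
\]
which yields $\mu(T_n^{\leq}(\phi,C))\leq \mu(T_n(\phi,C))$ by monotonicity of the Haar measure, and then taking $\log$, dividing by $n$, and passing to the limsup. The leftmost inequality $0\leq H^{\leq}(\phi,C)$ uses that $0\in \phi^{n-1}C$ (because $0\in C$ and $\phi$ is a group homomorphism), hence $C\subseteq C+\phi^{n-1}C=T_n^{\leq}(\phi,C)$; thus $\mu(T_n^{\leq}(\phi,C))\geq \mu(C)>0$, and the ratio $\log\mu(T_n^{\leq}(\phi,C))/n\geq \log\mu(C)/n\to 0$.

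For part (2), assume $\phi\in\Aut(G)$ and use the symmetric inclusion $\phi^{n-1}C\subseteq C+\phi^{n-1}C=T_n^{\leq}(\phi,C)$, which again holds because $0\in C$. By the defining property of the modulus,
\[
\mu(\phi^{n-1}C)=\mathrm{mod}_G(\phi)^{n-1}\mu(C),
\]
so $\mu(T_n^{\leq}(\phi,C))\geq \mathrm{mod}_G(\phi)^{n-1}\mu(C)$. Taking logarithms, dividing by $n$, and letting $n\to\infty$ gives $H^{\leq}(\phi,C)\geq \log\mathrm{mod}_G(\phi)$, as required.

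There is no real obstacle here — the entire argument is a packaging of the two inclusions $C\subseteq T_n^{\leq}(\phi,C)$ and $\phi^{n-1}C\subseteq T_n^{\leq}(\phi,C)$, both resting on $0\in C$, together with Claim \ref{haar} to ensure that the choice of Haar measure is immaterial. The only point that requires a moment of care is confirming that $\mu(C)>0$, which follows because $C$ contains a non-empty open neighborhood of $0$ and the Haar measure is strictly positive on non-empty open sets.
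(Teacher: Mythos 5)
Your proof is correct and follows essentially the same route as the paper's: part (1) from the inclusion $T_n^{\leq}(\phi,C)\subseteq T_n(\phi,C)$ together with monotonicity of the Haar measure, and part (2) from $\phi^{n-1}C\subseteq T_n^{\leq}(\phi,C)$ combined with the defining property of the modulus. You merely spell out the one detail the paper leaves implicit, namely that $\mu(C)>0$ because a compact neighborhood of $0$ has nonempty interior, which justifies $H^{\leq}(\phi,C)\geq 0$.
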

\begin{proof}
Let $\mu$ be a Haar measure on $G$.

\smallskip
\1 Let $n\in\N_+$ and note that $T_n^\leq(\phi,C)\subseteq T_n(\phi,C)$; using this inclusion and the monotonicity of $\mu$, it is not difficult to show that $H^{\leq}(\phi,C)\leq H_A(\phi,C)$. All the other inequalities in the statement are a direct consequence of the definitions. 

\smallskip
\2 Assume that $\phi\in\Aut(G)$ and let $n\in\N_+$. Since $\phi^nC\subseteq C+\phi^n C$, we get
\begin{equation*}\begin{split}
\log(\mod_G(\phi))=\lim_{n\to\infty}\frac{\log(\mod_G(\phi)^{n-1}\mu(C))}{n}=\lim_{n\to\infty}\frac{\log\mu(\phi^{n-1}C)}{n}\\
\leq\limsup_{n\to\infty}\frac{\log\mu(C+\phi^{n-1}C)}{n}=H^\leq(\phi,C) .
\end{split}\end{equation*}
This concludes the proof.
\end{proof}

\begin{proposition}\label{prop'}
Let $G$ and $G'$ be LCA groups, $\phi\in\End(G)$ and $\phi'\in\End(G')$.
\begin{enumerate}[\rm (1)]
\item  If $\phi$ and $\phi'$ are conjugated by a topological isomorphism $\alpha : G \to G'$, then $H^\leq(\phi,C)=H^\leq(\phi',\alpha C)$ for every $C\in\c(G)$.
\item For every $C\in\c(G)$ and $C'\in\c(G')$,
\begin{equation}\label{ATPPP}
H^{\leq}(\phi\times\phi',C\times C')\leq H^{\leq}(\phi,C)+H^{\leq}(\phi',C') .
\end{equation}
If the minor $\phi$-trajectory of $C$ converges, then equality holds in \eqref{ATPPP}.

\item Let $\Phi=\phi\times\phi:G\times G\to G\times G$ and $C\in\c(G)$. Then 
\begin{equation*}
H^\leq(\Phi,C\times C)= 2 H^\leq(\phi,C).
\end{equation*}
\end{enumerate}
\end{proposition}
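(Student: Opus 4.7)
The plan is to run the argument in close parallel with Proposition~\ref{prop}, exploiting the two structural features of the minor trajectory: it transforms equivariantly under conjugation, and it splits as a product on product groups. Throughout I will freely appeal to Claim~\ref{haar} to make Haar measures fit the situation at hand.

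For (1), write $\phi = \alpha^{-1}\phi'\alpha$, so that $\phi^{n-1} = \alpha^{-1}(\phi')^{n-1}\alpha$ for every $n \in \N_+$. Applying $\alpha$ to $T_n^{\leq}(\phi,C) = C + \phi^{n-1}C$ gives
$$\alpha\bigl(T_n^{\leq}(\phi,C)\bigr) \;=\; \alpha C + (\phi')^{n-1}\alpha C \;=\; T_n^{\leq}(\phi', \alpha C).$$
Fix a Haar measure $\mu$ on $G$ and define $\mu'$ on $G'$ by $\mu'(E) = \mu(\alpha^{-1}E)$; since $\alpha$ is a topological isomorphism, $\mu'$ is a Haar measure on $G'$, and by construction $\mu'(T_n^{\leq}(\phi',\alpha C)) = \mu(T_n^{\leq}(\phi,C))$. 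Dividing $\log$ by $n$ and taking $\limsup$ gives $H^{\leq}(\phi,C) = H^{\leq}(\phi',\alpha C)$ for this particular $\mu'$, and Claim~\ref{haar} promotes this to an equality independent of the choice of Haar measure on $G'$.

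For (2), pick Haar measures $\mu$ on $G$ and $\mu'$ on $G'$ and work with the product Haar measure $\mu \times \mu'$ on $G \times G'$. Since $(\phi\times\phi')^{n-1} = \phi^{n-1}\times(\phi')^{n-1}$, a direct computation yields
$$T_n^{\leq}(\phi\times\phi', C\times C') \;=\; T_n^{\leq}(\phi,C)\,\times\,T_n^{\leq}(\phi',C'),$$
whence $(\mu\times\mu')\bigl(T_n^{\leq}(\phi\times\phi',C\times C')\bigr) = \mu(T_n^{\leq}(\phi,C))\cdot\mu'(T_n^{\leq}(\phi',C'))$. Taking logarithms, dividing by $n$, and applying the subadditivity of $\limsup$ delivers inequality \eqref{ATPPP}. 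The equality clause follows from the standard fact that if $(a_n)$ converges and $(b_n)$ is bounded above, then $\limsup(a_n + b_n) = \lim a_n + \limsup b_n$: when the minor $\phi$-trajectory of $C$ converges, $a_n := n^{-1}\log\mu(T_n^{\leq}(\phi,C))$ is convergent, so we can split the $\limsup$ exactly.

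For (3), the same product identity applied with $\phi' = \phi$ and $C' = C$ gives $T_n^{\leq}(\Phi, C\times C) = T_n^{\leq}(\phi,C)\times T_n^{\leq}(\phi,C)$, whose product measure equals $\mu(T_n^{\leq}(\phi,C))^2$. Hence
$$\frac{\log\bigl((\mu\times\mu)(T_n^{\leq}(\Phi, C\times C))\bigr)}{n} \;=\; 2\cdot\frac{\log\mu(T_n^{\leq}(\phi,C))}{n},$$
and the factor $2$ passes through the $\limsup$, giving the equality in (3). The only subtlety worth flagging -- and the step I expect a careless reader to stumble on -- is that (3) should \emph{not} be derived by specializing (2), because (2) would demand convergence of the minor trajectory. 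The square structure here bypasses that requirement: since both factors of the product are identical, the $\limsup$ of twice a sequence equals twice the $\limsup$, unconditionally.
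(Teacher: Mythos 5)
Your proof is correct and follows the same route as the paper's: transfer the Haar measure along $\alpha$ for (1), factor the minor trajectory through the product Haar measure plus subadditivity of $\limsup$ for (2), and observe in (3) that with identical factors the $\limsup$ of twice a sequence equals twice the $\limsup$ (which is exactly how the paper gets equality in its chain of inequalities). The only difference is presentational -- the paper phrases (3) as "equality holds in the displayed chain when $G=G'$, $\phi=\phi'$, $C=C'$," which is the same observation you flag.
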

\begin{proof}
\1 Let $\mu$ be a Haar measure on $G$. For every Borel subset $E\subseteq G'$, let $\mu'(E)=\mu(\alpha^{-1}E)$. Then $\mu'$ is a Haar measure on $G'$. For $C\in\c(G)$ and $n\in\N_+$, $$\mu(T_n^\leq(\phi,C))=\mu'(\alpha T_n^\leq(\phi,C))=\mu'(T_n^\leq(\phi',\alpha C)) ;$$
consequently, $H^\leq(\phi,C)=H^\leq(\phi',\alpha C)$.

\smallskip
\2 Let $\mu$ and $\mu'$ be Haar measures on $G$ and $G'$ respectively. It is known that there exists a Haar measure  $\mu\times \mu'$ on $G\times G'$ such that $(\mu\times\mu')(E\times E')=\mu(E)\mu'(E')$ for every measurable $E\subseteq G$, $E'\subseteq G'$. Let now $C\in\c(G)$, $C'\in\c(G')$ and $n\in\N_+$. Then $T_n^\leq(\phi\times\phi',C\times C')=T_n^\leq(\phi,C)\times T_n^\leq(\phi',C')$, and so
\begin{align}\label{leq->=}
\notag H^\leq(\phi\times\phi',C\times C')&=\limsup_{n\to\infty}\frac{\log(\mu\times\mu')(T_n^\leq(\phi\times\phi',C\times C'))}{n}\\
&\leq\limsup_{n\to\infty}\frac{\log\mu(T_n^\leq(\phi,C))}{n}+\limsup_{n\to\infty}\frac{\log\mu'(T_n^\leq(\phi',C'))}{n}\\
\notag &=H^\leq(\phi,C)+H^\leq(\phi',C') .
\end{align}
If the minor $\phi$-trajectory of $C$ converges, then equality holds in \eqref{leq->=}.

\smallskip
\3 If $G=G'$, $\phi=\phi'$ and $C=C'$, then again equality holds in \eqref{leq->=}.
\end{proof}

\section{Algebraic entropy of endomorphisms of $\Q_p^N$}\label{p-adic}

We start this section fixing some notations. 
Recall that we denote by $\p$ the set of all prime numbers plus the symbol $\infty$. All along this section $p$ denotes an arbitrarily fixed element of $\p$ and $N$ a fixed positive integer. 

When $p<\infty$, we denote by $\Q_p$ the field of $p$-adic numbers, which is the field of quotients of the ring $\Z_p$ of $p$-adic integers, that is, $\Q_p=\bigcup_{n\in\N}\frac{1}{p^n}\Z_p$. An arbitrary element $x$ of $\Q_p$ has a unique {\em $p$-adic expansion} of the form
$$x=x_{-n}p^{-n}+x_{-n+1}p^{-n+1}+\ldots+x_0+x_1 p+\ldots+x_k p^k+\ldots$$
for some $n\in\N$, and $0\leq x_i\leq p-1$ for every $i\geq -n$; for $x\neq 0$ we always assume that $x_{-n}\neq0$.
The {\em $p$-adic norm} of $x$ is
$$|x|_p=\begin{cases}p^n& \text{if $x\neq 0$;}\\
0 & \text{if $x=0$.}\end{cases}$$
When $p=\infty$, we let $\Q_\infty=\R$ and $|-|_\infty$ be the usual absolute value on $\R$.

\smallskip
For every $p\in\p$ and $\varepsilon \in \R_+$, we denote by
$$D_p(\varepsilon)=\{x\in\Q_p : |x|_p\leq \varepsilon\}$$
the {\em disc} in $\Q_p$ of radius $\varepsilon$ centered at $0$. The family $\F_p=\{D_p(\varepsilon):\varepsilon \in\R_+\}$ is a base of compact neighborhoods of $0$ in $\Q_p$.

\begin{remark}\label{rm}
If $p$ is finite, $D_p(1)=\Z_p$ is the ring of $p$-adic integers. More generally, since for every non-trivial $x\in\Q_p$, $|x|_p=p^m$ for some $m\in\Z$, we have that $D_p(\varepsilon)=D_p(p^m)$, where $m$ is the largest integer such that $p^m\leq\varepsilon$, and $D_p(p^m)=p^{-m}\Z_p$ for every $m\in\Z$.

Thus $\mathcal F_p=\{p^{-m}\Z_p:m\in\Z\}$, where $p^{m}\Z_p\subseteq p^{m-1}\Z_p$ for every $m\in\Z$.
\end{remark}

\subsection{Finite extensions $K_p$ of $\Q_p$}

All along this section, $K_p$ denotes a finite extension of $\Q_p$ of degree
$$d_p=[K_p:\Q_p].$$ 
We denote again by $|-|_p$ the unique extension of the $p$-adic norm to $K_p$. Note that $K_\infty$ can be only either the trivial extension $K_\infty=\R$ or $K_\infty=\C$. In the first case $d_\infty=1$, while in the second case $d_\infty=2$ and $|-|_\infty$ is the usual norm on $\C$.

Fix on $K_p^N$ the $\max$-norm with respect to the canonical base, that is,
for every $x=(x_i)_{i=1}^N\in K_p^N$, let
$$|x|_p=\max\{|x_i|_p:i=1,\ldots,N\}.$$
For $\varepsilon\in\R_+$, the disc in $K_p^N$ centered at $0$ of radius $\varepsilon$ is
$$D_{p,N}(\varepsilon)=\{x\in K_p^N: |x|_p\leq \varepsilon\};$$ sometimes we denote $D_{p,1}(\varepsilon)$ simply by $D_p(\varepsilon)$.

The family $\{D_{p,N}(\varepsilon):\varepsilon \in\R_+\}$ is a base of compact neighborhoods of $0$ in $K_p^N$. 

\begin{remark}\label{rm1}
For every $i\in\{1,\ldots,N\}$ denote by $\pi_i:K_p^N\to K_p$ the $i$-th natural projection. A subset $S$ of $K_p^N$ is said to be {\em rectangular} if it coincides with the cartesian product $\pi_1(S)\times\ldots\times \pi_N(S)$.  
For the choice of the max-norm, $D_{p,N}(\varepsilon)$ is rectangular and all the projections $\pi_i(D_{p,N}(\varepsilon))$ coincide with $D_{p,1}(\varepsilon)$, that is $D_{p,N}(\varepsilon)=D_{p,1}(\varepsilon)^N$. 

Let $\mu $ and $\mu_N$ be the unique Haar measures on $K_p$ and on $K_p^N$ respectively, such that $\mu(D_{p}(1))=1$ and $\mu_N(D_{p,N}(1))=1$. By the uniqueness of the Haar measure, $\mu_N$ is the product measure of the measures $\mu$ taken on each copy of $K_p$. In particular,
\begin{equation}\label{DpN}
\mu_N(D_{p,N}(\varepsilon))=\mu_N(D_{p,1}(\varepsilon)^N)=\mu(D_{p}(\varepsilon))^N.
\end{equation}
\end{remark}

\smallskip
In the following lemma we use Remarks \ref{rm} and \ref{rm1} to estimate the measure of the discs $D_{p,N}(\varepsilon)$ in $K_p^N$.

\begin{lemma}\label{misurando_rettangoli}
Let $\varepsilon\in\R_+$, $p\in\p$ and let $\mu_p$ be the unique Haar measure on $K_p^N$ such that $\mu_p(D_{p,N}(1))=1$. Then $\mu_p(D_{p,N}(\varepsilon))\leq \varepsilon^{d_pN}$.
\end{lemma}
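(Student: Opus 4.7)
The plan is to reduce to the one-dimensional case $N=1$ using the product structure established in Remark \ref{rm1}, and then split into the archimedean and non-archimedean cases, which require rather different arguments.

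First, by Remark \ref{rm1} the measure $\mu_p$ is the $N$-fold product of the normalized Haar measure $\mu$ on $K_p$ satisfying $\mu(D_{p,1}(1))=1$, and $D_{p,N}(\varepsilon) = D_{p,1}(\varepsilon)^N$. Hence $\mu_p(D_{p,N}(\varepsilon)) = \mu(D_{p,1}(\varepsilon))^N$ by \eqref{DpN}, so it suffices to establish the one-dimensional bound $\mu(D_{p,1}(\varepsilon)) \leq \varepsilon^{d_p}$ and then raise to the $N$-th power.

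For $p=\infty$, a direct computation suffices. If $K_\infty = \R$, then $d_\infty = 1$ and $D_{\infty,1}(\varepsilon) = [-\varepsilon,\varepsilon]$, whose normalized Haar measure equals $\varepsilon$; if $K_\infty = \C$, then $d_\infty = 2$ and $D_{\infty,1}(\varepsilon)$ is the Euclidean disc of radius $\varepsilon$, whose measure normalized by that of the unit disc equals $\varepsilon^2$. In both subcases equality holds.

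For $p$ finite, I would invoke the standard structure theory of finite extensions of $\Q_p$: fix a uniformizer $\pi$ of $K_p$, and let $e$ and $f$ denote the ramification index and residue degree, so that $d_p = ef$, $|\pi|_p = p^{-1/e}$, and the residue field $\mathcal O_{K_p}/\pi\mathcal O_{K_p}$ has cardinality $p^f$, where $\mathcal O_{K_p} = D_{p,1}(1)$ is the ring of integers of $K_p$. Then $D_{p,1}(|\pi|_p^k) = \pi^k\mathcal O_{K_p}$ for every $k\in\Z$, and these exhaust all the discs centered at $0$ in $K_p$, by Remark \ref{rm}. A short filtration through the layers $\pi^j\mathcal O_{K_p}/\pi^{j+1}\mathcal O_{K_p}$, each of cardinality $p^f$, combined with translation invariance of $\mu$, yields $\mu(\pi^k\mathcal O_{K_p}) = p^{-kf} = (|\pi|_p^k)^{d_p}$ for every $k\in\Z$. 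For a general radius $\varepsilon\in\R_+$, the value group of $|-|_p$ on $K_p^*$ forces $D_{p,1}(\varepsilon) = D_{p,1}(|\pi|_p^k)$, where $k$ is the smallest integer with $|\pi|_p^k \leq \varepsilon$, and the inequality $(|\pi|_p^k)^{d_p} \leq \varepsilon^{d_p}$ closes the argument.

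The only real obstacle is the non-archimedean case, where one must invoke just enough local field theory — existence of a uniformizer, the identity $d_p = ef$, and the cardinality of the residue field — without turning the proof into a lengthy detour; everything else is bookkeeping.
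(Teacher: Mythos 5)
Your proof is correct, and it takes a genuinely different route in the non-archimedean case. The paper handles $p=\infty$ and $p<\infty$ uniformly via the modulus: it writes each disc as a scalar multiple of the unit disc ($D_\infty(\varepsilon)=\varepsilon D_\infty(1)$, resp.\ $D_p(p^m)=p^{-m}D_p(1)$) and applies the formulas for $\mod_{K_p}$ of a linear automorphism from Example~\ref{example}(b,d,e), which is a short computation once one trusts the modulus table. You instead go through the internal structure of $K_p$: a uniformizer $\pi$, the filtration by $\pi^k\mathcal O_{K_p}$ with layers of size $p^f$, and the identity $d_p=ef$. What your route buys is explicit control over the value group of $|-|_p$ on $K_p^*$, which is $\{p^{j/e}:j\in\Z\}$ and not $\{p^m:m\in\Z\}$; the paper's appeal to Remark~\ref{rm} (which is stated for $\Q_p$ only) to assert $D_p(\varepsilon)=D_p(p^m)$ with $m\in\Z$ is literally an equality only when $K_p/\Q_p$ is unramified, though the inequality the lemma claims survives in any case. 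The cost of your route is importing standard local-field facts (ramification index, residue degree, residue field cardinality) that the paper's modulus argument is designed to sidestep. Both proofs reduce to $N=1$ identically via \eqref{DpN}, give equality for $p=\infty$, and obtain the finite-$p$ inequality by rounding $\varepsilon$ down to the nearest attained radius in the value group.
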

\begin{proof}
By \eqref{DpN} in Remark \ref{rm1} we can assume $N=1$. We divide the proof in two cases.

First suppose $p=\infty$. Then $D_\infty(\varepsilon)=\varepsilon D_\infty(1)$. The scalar multiplication by $\varepsilon$ is an automorphism of $K_\infty$,  that we denote by $\varphi_\varepsilon$. Then, by the definition of modulus and Example \ref{example}(b,d), we get $$\mu_\infty(D_\infty(\varepsilon))=\mu_\infty(\varepsilon D_\infty(1))=|\det {}_{K_\infty}(\varphi_\varepsilon)|^{d_\infty}\mu_\infty(D_\infty(1))=\varepsilon^{d_\infty} .$$

Suppose now that $p$ is finite. By Remark \ref{rm} there exists $m\in\Z$ such that $p^m\leq \varepsilon$ and $D_p(\varepsilon)=D_p(p^m)=p^{-m}D_p(1)$. The scalar multiplication by $p^{-m}$ is an automorphism of $K_p$,  that we denote by $\varphi_{p^{-m}}$. Then, using the definition of modulus and Example \ref{example}(e), we get
 $$\mu_p(D_p(\varepsilon))=\mu_p(D_p(p^m))=\mu_p(p^{-m}D_p(1))=|\det {}_{K_p}(\varphi_{p^{-m}})|_p^{d_p}\mu_p(D_p(1))=p^{md_p}\leq \varepsilon^{d_p},$$
 as desired.
 \end{proof}

\smallskip
For a $K_p$-linear endomorphism $\phi:K_p^N\to K_p^N$, we define the {\em norm of $\phi$} as
\begin{equation}\label{nnnn}
||\phi||_p=
\max\left\{\sum_{j=1}^N|a_{ij}|_p:i=1,\ldots,N\right\},
\end{equation}
where $M_\phi=(a_{ij})_{i,j}$ is the $N\times N$ matrix associated to $\phi$ with respect to the canonical base of $K_p^N$. 
It is well-known (and easily verified) that $|\phi(x)|_p\leq||\phi||_p|x|_p$ for every $ x\in K_p^N.$ Equivalently,
\begin{equation}\label{imm-palle}
\phi(D_{p,N}(\varepsilon))\subseteq D_{p,N}(||\phi||_p \varepsilon) .
\end{equation}

\begin{remark}
For a {\em finite} $p\in\p$, the natural choice for the norm to consider in \eqref{nnnn} should be $$\max\left\{|a_{ij}|_p:1\leq i,j\leq N\right\} .$$ This would allow a better approximation in \eqref{imm-palle}. Nevertheless, we prefer the norm as defined in \eqref{nnnn} as it permits to treat the case $p=\infty$ together with the case when $p$ is finite.
\end{remark}

\subsection{Algebraic entropy in $K_p^N$, when $K_p$ contains the eigenvalues}

All along this and the following subsection we fix $p\in\mathbb P$. Let $\lambda\in K_p$. An $N\times N$ matrix $J$ with coefficients in $K_p$ is a {\em Jordan block} relative to $\lambda$ if all the entries on the diagonal of $J$ are equal to $\lambda$, all the entries on the first superdiagonal are equal to $1$ and all the other entries are equal to $0$:
$$J=\begin{pmatrix}\lambda & 1 & 0 & \ldots & 0 \\
0 & \lambda & 1 & \ldots & 0\\
\vdots &\ddots &\ddots & \ddots &\vdots\\
0 & \ldots & 0 & \lambda & 1\\
0 & \ldots & 0 & 0 &\lambda
\end{pmatrix}$$ 
It is well-known from linear algebra that  for $s\in\N_+$ the matrix $J^s$ is an upper triangular matrix with $\lambda^s$ on the diagonal and 
$\binom{s}{j}\lambda^{s-j}$
on the $j$-th superdiagonal, for $j=1,2,\ldots,\min\{s,N-1\}$; in case $s<N-1$, the values above the $s$-th superdiagonal are all zero.

An $N\times N$ matrix $M$ with coefficients in $K_p$ is said to be in {\em Jordan form} if it is a block matrix whose diagonal blocks are Jordan blocks  and all the other blocks are zero.

\medskip
In the following lemma and proposition we compute the algebraic entropy of a $K_p$-linear endomorphism of $K_p^N$ whose matrix is a single Jordan block. 

\begin{lemma}\label{uff-p}
Let $\phi:K_p^N\to K_p^N$ be a $K_p$-linear endomorphism whose matrix is a Jordan block relative to $\lambda\in K_p$, let $n\in\N_+$, and $\varepsilon\in\R_+$. 
\begin{enumerate}[\rm (1)]
\item If $|\lambda|_p\leq 1$, then $T_n(\phi,D_{p,N}(\varepsilon))\subseteq D_{p,N}(n^{N+1} N \varepsilon)$.
\item If $|\lambda|_p>1$, then $T_n(\phi,D_{p,N}(\varepsilon))\subseteq D_{p,N}(|\lambda|_p^{n} n^{N+1} N \varepsilon)$.
\end{enumerate}
\end{lemma}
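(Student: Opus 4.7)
The plan is to reduce both statements to a uniform bound on the operator norms $\|\phi^s\|_p$ of the iterates $\phi^s$ for $0 \le s \le n-1$, and then exploit the explicit description of $J^s$ recalled just above the statement.

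First I would observe that every element $y$ of
$$T_n(\phi, D_{p,N}(\varepsilon)) = D_{p,N}(\varepsilon) + \phi D_{p,N}(\varepsilon) + \ldots + \phi^{n-1}D_{p,N}(\varepsilon)$$
decomposes as $y = \sum_{s=0}^{n-1}\phi^s(x_s)$ with $x_s \in D_{p,N}(\varepsilon)$. By \eqref{imm-palle} each summand lies in $D_{p,N}(\|\phi^s\|_p\,\varepsilon)$, and the triangle inequality for the max-norm on $K_p^N$, applied coordinate by coordinate, yields
$$|y|_p \le \sum_{s=0}^{n-1}\|\phi^s\|_p\,\varepsilon.$$
So the lemma reduces to estimating $\sum_{s=0}^{n-1}\|\phi^s\|_p$.

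Second, from the explicit formula for $J^s$ recalled above the statement, its nonzero entries are of the form $\binom{s}{k}\lambda^{s-k}$ with $0\le k\le \min\{s,N-1\}$ (in particular $s-k\ge 0$ for every such entry). The key uniform estimate bridging the archimedean and the $p$-adic worlds is $\bigl|\binom{s}{k}\bigr|_p \le \binom{s}{k}$ for every $p\in\p$: this is an equality for $p=\infty$, while for finite $p$ the binomial coefficient is a rational integer, so its $p$-adic absolute value is at most $1\le \binom{s}{k}$. Together with the crude bound $\binom{s}{k}\le n^{N-1}$ (using $k\le N-1$ and $s\le n-1$) and the fact that each row of $J^s$ contains at most $N$ nonzero entries, the definition of $\|\cdot\|_p$ in \eqref{nnnn} gives
$$\|\phi^s\|_p \le N\,n^{N-1}\cdot \max_{0\le k\le \min\{s,N-1\}}|\lambda|_p^{s-k}.$$

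Finally, I would split into the two cases. If $|\lambda|_p \le 1$, then $|\lambda|_p^{s-k}\le 1$ since $s-k\ge 0$, so $\|\phi^s\|_p \le N\,n^{N-1}$; summing over the $n$ indices $s=0,\ldots,n-1$ yields $\sum_{s=0}^{n-1}\|\phi^s\|_p \le N\,n^{N}\le N\,n^{N+1}$, proving (1). If $|\lambda|_p > 1$, then $|\lambda|_p^{s-k}\le |\lambda|_p^s\le |\lambda|_p^{n}$, so $\|\phi^s\|_p\le N\,n^{N-1}|\lambda|_p^n$, and summing gives $\sum_{s=0}^{n-1}\|\phi^s\|_p \le N\,n^{N}|\lambda|_p^n\le N\,n^{N+1}|\lambda|_p^n$, proving (2). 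The only (mild) obstacle is ensuring that the estimates are uniform across archimedean and non-archimedean $p$; the inequality $\bigl|\binom{s}{k}\bigr|_p\le \binom{s}{k}$, which stems from the fact that integer coefficients have $p$-adic absolute value at most $1$ for finite $p$, is the bridge that makes both cases formally identical.
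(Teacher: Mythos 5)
Your proof is correct and follows essentially the same approach as the paper's: decompose an element of the trajectory as $\sum_{s=0}^{n-1}\phi^s(x_s)$ with $x_s\in D_{p,N}(\varepsilon)$, bound each operator norm $\|\phi^s\|_p$ via the explicit entries $\binom{s}{k}\lambda^{s-k}$ of $J^s$ together with the observation that $\bigl|\binom{s}{k}\bigr|_p\le\binom{s}{k}$ for all $p\in\p$, and then sum. The paper phrases the second step as an inclusion of sums of balls rather than the coordinatewise triangle inequality, and uses the slightly different intermediate bound $\|\phi^s\|_p\le s^N\sum_{j=0}^{\min\{s,N-1\}}|\lambda|_p^{s-j}$, but the underlying estimate and casework are the same.
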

\begin{proof}
For every $s\in\N$, the explicit form of the matrix gives
\begin{equation}\label{J<-p}
||\phi^s||_p=\max\left\{\sum_{j=0}^{\min\{s,N-i\}}\left|\binom{s}{j}\lambda^{s-j} \right|_p:i=1,\ldots,N\right\}  \leq s^N \sum_{j=0}^{\min\{s,N-1\}}|\lambda|_p^{s-j} .
\end{equation}

\smallskip
\1  If $|\lambda|_p\leq 1$, then \eqref{J<-p} gives $||\phi^s||_p\leq Ns^N$ for every $s\in\N$. Consequently, by \eqref{imm-palle}
$$T_n(\phi,D_{p,N}(\varepsilon))\subseteq D_{p,N}(\varepsilon)+D_{p,N}(N\varepsilon)+ \ldots + D_{p,N}((n-1)^NN\varepsilon) \subseteq D_{p,N}(n^{N+1}N\varepsilon) .$$

\smallskip
\2 If $|\lambda|_p\geq 1$, then \eqref{J<-p} gives $||\phi^s||_p\leq Ns^N |\lambda|_p^s$ for every $s\in\N$. So, by \eqref{imm-palle}
$$T_n(\phi,D_{p,N}(\varepsilon))\subseteq D_{p,N}(\varepsilon)+  D_{p,N}(|\lambda|_pN\varepsilon)+\ldots + D_{p,N}((n-1)^N |\lambda|_p^{n-1} N\varepsilon)\subseteq D_{p,N}(n^{N+1}|\lambda|_p^{n}N\varepsilon).$$
This concludes the proof.
\end{proof}

\begin{proposition}\label{jordan-p}
Let $\phi : K_p^N\to K_p^N$ be a $K_p$-linear endomorphism whose matrix is a Jordan block relative to $\lambda\in K_p$. Then, for every $\varepsilon \in \R_+$,
$$H^{\leq}(\phi, D_{p,N}(\varepsilon)) = H_A(\phi, D_{p,N}(\varepsilon)) = \begin{cases}0 & \text{if}\ |\lambda|_p\leq 1, \\ d_pN\cdot \log|\lambda|_p & \text{if}\ |\lambda|_p>1.\end{cases}
$$
Furthermore, the $\phi$-trajectory and the minor $\phi$-trajectory of $D_{p,N}(\varepsilon)$ converge.
\end{proposition}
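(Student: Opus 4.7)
The plan is to sandwich both $H^{\leq}(\phi, D_{p,N}(\varepsilon))$ and $H_A(\phi, D_{p,N}(\varepsilon))$ between matching upper and lower bounds. Throughout, fix the Haar measure $\mu_p$ on $K_p^N$ normalized by $\mu_p(D_{p,N}(1))=1$.

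For the upper bound, I would apply Lemma~\ref{uff-p}: $T_n(\phi, D_{p,N}(\varepsilon))$ lies inside a disc $D_{p,N}(c_n)$, where $c_n = n^{N+1}N\varepsilon$ when $|\lambda|_p \leq 1$ and $c_n = |\lambda|_p^n n^{N+1}N\varepsilon$ when $|\lambda|_p > 1$. Combining monotonicity of $\mu_p$ with Lemma~\ref{misurando_rettangoli} yields $\mu_p(T_n(\phi, D_{p,N}(\varepsilon))) \leq c_n^{d_p N}$. Taking $\log/n$ and passing to the $\limsup$, the polynomial factor $(n^{N+1}N\varepsilon)^{d_p N}$ disappears and we obtain $H_A(\phi, D_{p,N}(\varepsilon)) \leq 0$ if $|\lambda|_p \leq 1$, and $H_A(\phi, D_{p,N}(\varepsilon)) \leq d_p N \log|\lambda|_p$ if $|\lambda|_p > 1$. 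Since $T_n^\leq(\phi, D_{p,N}(\varepsilon)) \subseteq T_n(\phi, D_{p,N}(\varepsilon))$, the same upper bounds hold for $H^{\leq}(\phi, D_{p,N}(\varepsilon))$.

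For the lower bound I appeal to Lemma~\ref{base-leq}. Part~(1) gives $H^{\leq} \geq 0$ unconditionally, which settles the case $|\lambda|_p \leq 1$. When $|\lambda|_p > 1$, the Jordan block $\phi$ is upper triangular with $\det_{K_p}(\phi) = \lambda^N \neq 0$, so $\phi \in \Aut(K_p^N)$ and Example~\ref{example}(e) delivers $\mod_{K_p^N}(\phi) = |\lambda|_p^{d_p N}$. Lemma~\ref{base-leq}(2) then furnishes $H^{\leq}(\phi, D_{p,N}(\varepsilon)) \geq d_p N \log|\lambda|_p$. Combined with $H^{\leq} \leq H_A$ from Lemma~\ref{base-leq}(1), both quantities coincide with the claimed value.

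Convergence of the two trajectories comes essentially for free. The inclusion $T_n^\leq(\phi, D_{p,N}(\varepsilon)) \supseteq \phi^{n-1}D_{p,N}(\varepsilon)$, which is the content driving Lemma~\ref{base-leq}(2), actually bounds the $\liminf$ of $\frac{\log \mu_p(T_n^\leq)}{n}$ from below by $d_p N \log|\lambda|_p$ when $|\lambda|_p > 1$; the trivial inclusion $T_n^\leq \supseteq D_{p,N}(\varepsilon)$, a set of positive finite measure, gives $\liminf \geq 0$ otherwise. The same lower bounds transfer to $T_n(\phi, D_{p,N}(\varepsilon))$ since $T_n \supseteq T_n^\leq$. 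Sandwiched against the upper bounds above, $\liminf$ equals $\limsup$ in every case, so both trajectories converge. I do not foresee a serious conceptual obstacle; the only mild point of care is the possibility $\lambda = 0$, which falls automatically into the first branch since $|\lambda|_p = 0 \leq 1$ and therefore requires no modulus computation.
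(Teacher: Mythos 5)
Your proof is correct and takes essentially the same route as the paper: Lemma~\ref{uff-p} plus Lemma~\ref{misurando_rettangoli} for the upper bound, and Lemma~\ref{base-leq} with the modulus from Example~\ref{example}(e) for the lower bound. The one point you handle more explicitly than the paper is the convergence claim — the paper lets it follow implicitly from the sandwiching, whereas you spell out that the lower-bound inclusions $T_n^{\leq} \supseteq \phi^{n-1}D_{p,N}(\varepsilon)$ and $T_n^{\leq} \supseteq D_{p,N}(\varepsilon)$ actually control the $\liminf$, which is the right observation.
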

\begin{proof}
Let $\mu_p$ be the unique Haar measure on $K_p^N$ such that $\mu_p(D_{p,N}(1))=1$. For every $\varepsilon\in\R_+$, by Lemma \ref{base-leq}(1),
\begin{equation}\label{0leq-p}
0\leq H^{\leq}(\phi, D_{p,N}(\varepsilon)) \leq H_A(\phi, D_{p,N}(\varepsilon)).
\end{equation} 

\smallskip
Suppose that $|\lambda|_p\leq 1$. By Lemma \ref{uff-p}\1, $T_n(\phi,D_{p,N}(\varepsilon))\subseteq D_{p,N}(n^{N+1}N\varepsilon)$, and so
\begin{equation*}\begin{split}
0\leq H_A(\phi,D_{p,N}(\varepsilon))= \limsup_{n\to\infty}\frac{\log\mu_p(T_n(\phi,D_{p,N}(\varepsilon)))}{n}\leq
\lim_{n\to\infty}\frac{\log\mu_p(D_{p,N}(n^{N+1}N\varepsilon))}{n}\leq\\
\overset{(*)}{\leq}\lim_{n\to\infty}\frac{\log ((n^{N+1}N\varepsilon)^{d_pN})}{n} =\lim_{n\to \infty}\frac{d_p N(N+1) \log n+d_p N \log (N\varepsilon)}{n}=0 ,
\end{split}\end{equation*}
where the inequality $(*)$ comes from Lemma \ref{misurando_rettangoli}.
The thesis now follows from \eqref{0leq-p}.

\smallskip
On the other hand, if $|\lambda|_p>1$, then $T_n(\phi,D_{p,N}(\varepsilon))\subseteq D_{p,N}(|\lambda|_p^{n} n^{N+1} N \varepsilon)$ by Lemma \ref{uff-p}\2, and so
\begin{equation}\begin{split}\label{leq1-p}
H_A(\phi,D_{p,N}(\varepsilon))= \limsup_{n\to\infty}\frac{\log\mu_p(T_n(\phi,D_{p,N}(\varepsilon)))}{n}= \limsup_{n\to\infty}\frac{\log\mu_p(D_{p,N}(|\lambda|_p^{n} n^{N+1} N \varepsilon))}{n}\leq \\
\overset{(**)}{\leq} \lim_{n\to\infty}\frac{\log((|\lambda|_p^{n} n^{N+1}N \varepsilon)^{d_pN})}{n}
=\lim_{n\to\infty}\frac{d_pNn\log|\lambda|_p+ d_pN\log(n^{N+1} N \varepsilon)}{n}= d_p N\cdot \log|\lambda|_p ,
\end{split}\end{equation}
where the inequality $(**)$ comes from Lemma \ref{misurando_rettangoli}. Furthermore, $\phi$ is an automorphism and so, by Example \ref{example}(e) and by Lemma \ref{base-leq}\2,
\begin{equation}\label{leq2-p}
d_p N\cdot \log|\lambda|_p=\log(\mod_{K_p^N}(\phi))\leq H^{\leq}(\phi, D_{p,N}(\varepsilon)).
\end{equation}
The two inequalities in \eqref{leq1-p} and \eqref{leq2-p}, together with that of \eqref{0leq-p}, give the desired conclusion.
\end{proof}

Using the above results we can give now a general formula to compute the algebraic entropy of a $K_p$-linear endomorphism of $K_p^N$ having all  eigenvalues in the base field $K_p$.

\begin{proposition}\label{Yuz-p}
Let $\phi:K_p^N\to K_p^N$ be an endomorphism, $C\in\c(K_p^N)$ and assume that the eigenvalues $\{\lambda_i:i=1,\ldots,N\}$ of $\phi$ are contained in $K_p$. Then
\begin{equation*}
h_A(\phi)=H_A(\phi,C)=H^\leq(\phi,C)=\sum_{|\lambda_i|_p>1}d_p\cdot \log|\lambda_i|_p .
\end{equation*}
\end{proposition}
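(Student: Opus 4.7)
The plan is to reduce $\phi$ to its Jordan normal form over $K_p$ (possible since all eigenvalues lie in $K_p$), and then decompose the computation along the Jordan blocks. Concretely, choose a $K_p$-linear isomorphism $\alpha \colon K_p^N \to K_p^N$ such that $J = \alpha \phi \alpha^{-1}$ is in Jordan form, so $J = J_1 \oplus \dots \oplus J_k$ with each $J_i$ a Jordan block of size $n_i$ relative to some eigenvalue $\lambda_{(i)} \in K_p$ and $\sum_i n_i = N$. Since $\alpha$ is a topological automorphism of $K_p^N$, Proposition \ref{prop}(1) and Proposition \ref{prop'}(1) give $H_A(\phi,C) = H_A(J,\alpha C)$ and $H^\leq(\phi,C) = H^\leq(J,\alpha C)$, as well as $h_A(\phi)=h_A(J)$, so I may replace $\phi$ by $J$ throughout.

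First I would compute $H_A$ and $H^\leq$ at the distinguished compact neighborhoods $D_{p,N}(\varepsilon)$. By Remark \ref{rm1}, $D_{p,N}(\varepsilon)$ is rectangular with respect to the Jordan decomposition $K_p^N = K_p^{n_1} \times \dots \times K_p^{n_k}$, namely $D_{p,N}(\varepsilon) = D_{p,n_1}(\varepsilon) \times \dots \times D_{p,n_k}(\varepsilon)$. Proposition \ref{jordan-p} gives, for every $i$ and every $\varepsilon \in \R_+$, that $H_A(J_i, D_{p,n_i}(\varepsilon)) = H^\leq(J_i, D_{p,n_i}(\varepsilon))$ equals $0$ if $|\lambda_{(i)}|_p \leq 1$ and equals $d_p n_i \log|\lambda_{(i)}|_p$ if $|\lambda_{(i)}|_p > 1$, and moreover that both trajectories converge. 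Applying Proposition \ref{prop}(2) iteratively across the $k$ blocks (the convergence hypothesis on the $\phi$-trajectories yields equality, not just the submultiplicative bound), and similarly Proposition \ref{prop'}(2) for the minor trajectory, I obtain
\begin{equation*}
H_A(J, D_{p,N}(\varepsilon)) \;=\; H^\leq(J, D_{p,N}(\varepsilon)) \;=\; \sum_{|\lambda_{(i)}|_p>1} d_p n_i \log|\lambda_{(i)}|_p \;=\; \sum_{|\lambda_j|_p>1} d_p\log|\lambda_j|_p,
\end{equation*}
where the last equality rewrites the sum over Jordan blocks as a sum over eigenvalues counted with multiplicity. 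Crucially, this value is independent of $\varepsilon$.

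Finally I would pass from the special neighborhoods $D_{p,N}(\varepsilon)$ to an arbitrary $C \in \c(K_p^N)$ by a squeezing argument. Since $\F_{p,N} = \{D_{p,N}(\varepsilon) : \varepsilon \in \R_+\}$ is a base of compact neighborhoods of $0$, for any $C \in \c(K_p^N)$ one can find $0 < \varepsilon < \varepsilon'$ with $D_{p,N}(\varepsilon) \subseteq C \subseteq D_{p,N}(\varepsilon')$. By Lemma \ref{monotonia}(1) (and the analogous obvious monotonicity of $H^\leq(\phi,-)$, which follows from monotonicity of the Haar measure since $C \subseteq C'$ implies $T_n^\leq(\phi,C) \subseteq T_n^\leq(\phi,C')$), $H_A(\phi,C)$ and $H^\leq(\phi,C)$ are squeezed between values that are equal and independent of $\varepsilon,\varepsilon'$, so both equal the formula. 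Taking the supremum over $C \in \c(K_p^N)$ (equivalently, over $\F_{p,N}$ by Lemma \ref{monotonia}(2)) then yields $h_A(\phi)$ equal to the same value, completing the proof.

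I expect the main technical point to be the careful application of Proposition \ref{prop}(2) to get \emph{equality} in the product formula: this is where one must invoke the convergence of the $\phi$-trajectory at each Jordan block, as furnished by Proposition \ref{jordan-p}. The rest is bookkeeping: identifying discs with products of discs via the max-norm (Remark \ref{rm1}) and exploiting cofinality of the family $\F_{p,N}$ in $\c(K_p^N)$.
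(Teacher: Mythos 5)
Your proof is correct and takes essentially the same route as the paper: reduce to Jordan form via conjugation (Propositions \ref{prop}(1) and \ref{prop'}(1)), apply Proposition \ref{jordan-p} block by block together with the equality case of the product formulas \ref{prop}(2) and \ref{prop'}(2) (using the convergence supplied by \ref{jordan-p}) to evaluate at the discs $D_{p,N}(\varepsilon)$, and then squeeze a general $C$ between two discs. The paper packages the final squeeze as a single chain $H^\leq(\psi,D_{p,N}(\delta)) \leq H^\leq(\psi,\alpha C) \leq H_A(\psi,\alpha C) \leq H_A(\psi,D_{p,N}(\varepsilon))$, exploiting $H^\leq \leq H_A$ from Lemma \ref{base-leq}(1), whereas you run two parallel squeezes, but this is a cosmetic difference.
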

\begin{proof}
Denote by $M_\phi$ the matrix associated to $\phi$. It is well-known from linear algebra that there exist an invertible matrix $M$ and a matrix $J$ such that $M_\phi=M^{-1}JM$, with $J$ in Jordan form. Denote by $\psi$ the endomorphism associated to $J$ and by $\alpha$ the automorphism associated to $M$. By Proposition \ref{prop'}\1 and Proposition \ref{prop}\1, we have 
\begin{equation}\label{J}
H^{\leq}(\phi,C)=H^{\leq}(\psi,\alpha C),\ H_A(\phi,C)=H_A(\psi, \alpha C)\ \text{and}\ h_A(\phi)=h_A(\psi).
\end{equation} 
Clearly, $\alpha C\in\c(K_p^N)$ and so we can fix $\delta,\varepsilon\in\R_+$ such that
$$D_{p,N}(\delta)\subseteq \alpha C\subseteq D_{p,N}(\varepsilon).$$ 
Now, $K_p^N$ is a direct product of $\psi$-invariant subspaces on which  $\psi$ acts as a single Jordan block. 
By the existence of limits in Proposition \ref{jordan-p}, we can apply Propositions \ref{prop}\2 and \ref{prop'}\2 to obtain
\begin{align*}
\sum_{|\lambda_i|_p>1}d_p\cdot \log|\lambda_i|_p&=H^\leq(\psi, D_{p,N}(\delta))\leq H^\leq(\psi,\alpha C)\leq \\ 
\notag &\leq H_A(\psi, \alpha C)\leq H_A(\psi, D_{p,N}(\varepsilon))=\sum_{|\lambda_i|_p>1}d_p\cdot \log|\lambda_i|_p.
\end{align*}
Since $\alpha$ induces a bijection of $\c(K_p^N)$ onto itself, in particular for every $C\in\c(K_p^N)$ we have
\begin{equation}\label{res1}
H^\leq(\psi,C)=H_A(\psi,C)=\sum_{|\lambda_i|_p>1}d_p\cdot \log|\lambda_i|_p.
\end{equation}
Consequently,
\begin{equation}\label{res2}
h_A(\psi)=\sum_{|\lambda_i|_p>1}d_p\cdot \log|\lambda_i|_p.
\end{equation} 
Now \eqref{J} applied to \eqref{res1} and \eqref{res2} gives the desired conclusion.
\end{proof}

\subsection{General formula for the algebraic entropy in $\Q_p^N$}

Applying Proposition \ref{Yuz-p}, in the following theorem we can compute the algebraic entropy of an endomorphism of $\Q_p^N$ in terms of the eigenvalues of its matrix. This is a more precise version of Fact A announced in the Introduction. As a consequence we improve Proposition \ref{Yuz-p} in Corollary \ref{coro2}.

\begin{theorem}\label{yuz-pp}
Let $\phi_p:\Q_p^N\to \Q_p^N$ be an endomorphism and $C\in\c(\Q_p^N)$. Then
$$h_A(\phi_p)=H_A(\phi_p,C)=H^{\leq}(\phi_p,C)=\sum_{|\lambda_i|_p>1}\log|\lambda_i|_p,$$
where $\{\lambda_i:i=1,\ldots,N\}$ are the eigenvalues of $\phi_p$ in some finite extension $K_p$ of $\Q_p$.
\end{theorem}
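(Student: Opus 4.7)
The plan is to reduce to Proposition \ref{Yuz-p} by scalar extension, and then return to $\phi_p$ by an iterated product/conjugation argument.

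First, I would pick a finite extension $K_p/\Q_p$ of degree $d_p=[K_p:\Q_p]$ containing all the eigenvalues $\{\lambda_i:i=1,\ldots,N\}$ of $\phi_p$ (the splitting field of the characteristic polynomial of $\phi_p$ over $\Q_p$ will do; for $p=\infty$ this is $\R$ or $\C$). Then $\Phi_p:=\phi_p\otimes_{\Q_p}\mathrm{id}_{K_p}\colon K_p^N\to K_p^N$ is $K_p$-linear with the same matrix as $\phi_p$ in the canonical basis, so its eigenvalues are the $\lambda_i$ and they lie in $K_p$. Proposition \ref{Yuz-p} therefore applies to $\Phi_p$ and yields, for every $C'\in\c(K_p^N)$,
\[
h_A(\Phi_p)=H_A(\Phi_p,C')=H^{\leq}(\Phi_p,C')=\sum_{|\lambda_i|_p>1} d_p\log|\lambda_i|_p.
\]

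Next I would identify $\Phi_p$ with a $d_p$-fold self-product of $\phi_p$. Fixing a $\Q_p$-basis $e_1,\ldots,e_{d_p}$ of $K_p$ induces a topological isomorphism of LCA groups
\[
\alpha\colon K_p^N\longrightarrow (\Q_p^N)^{d_p},\qquad v\otimes e_j\longmapsto (0,\ldots,v,\ldots,0)\ \text{with $v$ in the $j$-th slot},
\]
and since $\Phi_p(v\otimes e_j)=\phi_p(v)\otimes e_j$, the conjugate $\alpha\,\Phi_p\,\alpha^{-1}$ is the direct product $\phi_p\times\cdots\times\phi_p$ of $d_p$ copies of $\phi_p$ acting on $(\Q_p^N)^{d_p}$.

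Finally, given any $C\in\c(\Q_p^N)$, set $C':=\alpha^{-1}(C^{d_p})\in\c(K_p^N)$. Iterating Proposition \ref{prop}(3) and Proposition \ref{prop'}(3) $d_p-1$ times, and absorbing $\alpha$ via the conjugation invariance in Proposition \ref{prop}(1) and Proposition \ref{prop'}(1), I obtain
\[
H_A(\Phi_p,C')=d_p\cdot H_A(\phi_p,C),\quad H^{\leq}(\Phi_p,C')=d_p\cdot H^{\leq}(\phi_p,C),\quad h_A(\Phi_p)=d_p\cdot h_A(\phi_p).
\]
Combining with the formula from the first step and dividing by $d_p$ gives simultaneously all three equalities in the statement.

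The main delicate point is the clean identification of $\alpha\,\Phi_p\,\alpha^{-1}$ with $\phi_p^{\times d_p}$ (and the verification that $\alpha$ is a topological isomorphism of LCA groups, which follows because $K_p^N$ carries its unique $\Q_p$-vector space topology); everything else is a direct assembly of Proposition \ref{Yuz-p} with the product/conjugation identities for $H_A$, $H^{\leq}$ and $h_A$ recorded in Section \ref{Background}.
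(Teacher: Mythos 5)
Your proof is correct and follows essentially the same route as the paper: extend scalars to a splitting field $K_p$ (so Proposition \ref{Yuz-p} applies), identify $\phi_p\otimes_{\Q_p}\mathrm{id}_{K_p}$ with the $d_p$-fold product $\phi_p\times\cdots\times\phi_p$ via the $\Q_p$-basis of $K_p$, and then cancel the factor $d_p$ using the conjugation and iterated-product identities from Propositions \ref{prop} and \ref{prop'}. The only (cosmetic) difference is that you explicitly take $C'=\alpha^{-1}(C^{d_p})$ and track $\alpha$ throughout, whereas the paper works with $C^{d_p}$ directly on the $(\Q_p^N)^{d_p}$ side; this is the same argument.
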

\begin{proof}
Let $d_p=[K_p:\Q_p]$. 
Extend $\phi_p$ to a $K_p$-linear endomorphism $\phi_{K_p}$ of $K_p^N$ simply by letting 
$$\phi_{K_p}=\phi_p\otimes_{\Q_p} id_{K_p};$$
the eigenvalues $\{\lambda_i:i=1,\ldots,N\}\subseteq K_p$ of $\phi_{K_p}$ are exactly the eigenvalues of $\phi_p$, since $\phi_{K_p}$ and $\phi_p$ are represented by the same matrix.

Fix a base $\{e_i:i=1,\ldots,d_p\}$ of $K_p$ over $\Q_p$. Then every $x\in K_p$ has coordinates $(x^{(1)},\ldots,x^{(d_p)})$ with respect to this base. Moreover, $K_p^N\cong (\Q_p^N)^{d_p}$ and this isomorphism is given by $\alpha:K_p^N\to (\Q_p^N)^{d_p}$, defined by 
$$\alpha(x_i)_{i=1}^N=((x_i^{(1)})_{i=1}^N,\ldots,(x_i^{(d_p)})_{i=1}^N)).$$
For $$\Phi=\underbrace{\phi_p\times\ldots\times\phi_p}_{d_p}:(\Q_p^N)^{d_p}\to (\Q_p^N)^{d_p},$$
an easy computation shows that $$\phi_{K_p}=\alpha^{-1} \Phi \alpha.$$
Let $C'\in\c(K_p^{N})$; so $\alpha C'\in\c((\Q_p^N)^{d_p})$ as well.
By Propositions \ref{prop'}\1 and \ref{prop}\1, 
$$H^{\leq}(\phi_{K_p},\alpha C')=H^{\leq}(\Phi, C'),\ \ H_A(\phi_{K_p}, \alpha C')=H_A(\Phi, C')\ \ \text{and}\ \ h_A(\phi_{K_p})=h_A(\Phi).$$ 
These equalities and Proposition \ref{Yuz-p} yield
\begin{equation}\label{JJJ}
h_A(\Phi)=H^{\leq}(\Phi, C')=H_A(\Phi, C')=\sum_{|\lambda_i|_p>1}d_p\cdot \log|\lambda_i|_p . 
\end{equation}
Let now $C\in\c(\Q_p^N)$; then $C'=\underbrace{C\times\ldots\times C}_{d_p}\in\c((\Q_p^N)^{d_p})$.
Since $\Phi=\underbrace{\phi_p\times\ldots\times\phi_p}_{d_p}$, \eqref{JJJ} together with Propositions \ref{prop}\3 and \ref{prop'}\3, and an obvious inductive argument, gives
$$h_A(\phi_p)=H^{\leq}(\phi_p, C)=H_A(\phi_p, C)=\sum_{|\lambda_i|_p>1}\log|\lambda_i|_p,$$
as desired.
\end{proof}

Consider a $K_p$-linear endomorphism $\phi:K_p^N\to K_p^N$. In particular, $\phi$ is conjugated to an endomorphism $\psi$ of $\Q_p^{d_pN}$. Furthermore, the set of the eigenvalues of $\psi$ over $\Q_p$ is a disjoint union of $d_p$ many copies of the set of the eigenvalues of $\phi$ over $K_p$. Hence, a consequence of the above theorem is that in Proposition \ref{Yuz-p} it is superfluous to assume the eigenvalues of $\phi$ to lie in the base field $K_p$:

\begin{corollary}\label{coro2}
Let $\phi:K_p^N\to K_p^N$ be a $K_p$-linear endomorphism and $C\in\c(K_p^N)$. Then
$$h_A(\phi)=H_A(\phi,C)=H^{\leq}(\phi,C)=\sum_{|\lambda_i|_p>1}d_p\cdot\log|\lambda_i|_p ,$$
where $\{\lambda_i:i=1,\ldots,N\}$ are the eigenvalues of $\phi $ in some finite extension of $K_p$.
\end{corollary}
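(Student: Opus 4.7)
The strategy is to reduce to Theorem \ref{yuz-pp} by restricting scalars from $K_p$ down to $\Q_p$, viewing the $K_p$-linear endomorphism $\phi$ as a $\Q_p$-linear endomorphism $\psi$ of $\Q_p^{d_pN}$, as already suggested in the paragraph preceding the corollary.

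First, I would fix a $\Q_p$-basis $\{e_1,\dots,e_{d_p}\}$ of $K_p$ and use it to build a $\Q_p$-linear topological isomorphism $\alpha : K_p^N \to \Q_p^{d_pN}$. Setting $\psi := \alpha\phi\alpha^{-1}$, this is a $\Q_p$-linear endomorphism of $\Q_p^{d_pN}$ topologically conjugate to $\phi$ via $\alpha$. By Proposition \ref{prop'}(1) and Proposition \ref{prop}(1), for every $C\in\c(K_p^N)$ we have $H^{\leq}(\phi,C)=H^{\leq}(\psi,\alpha C)$, $H_A(\phi,C)=H_A(\psi,\alpha C)$, and $h_A(\phi)=h_A(\psi)$. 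Since $\alpha$ induces a bijection between $\c(K_p^N)$ and $\c(\Q_p^{d_pN})$, it suffices to compute the three quantities for $\psi$.

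Next, I would apply Theorem \ref{yuz-pp} to $\psi$ and $\alpha C\in\c(\Q_p^{d_pN})$, obtaining
$$h_A(\psi)=H_A(\psi,\alpha C)=H^{\leq}(\psi,\alpha C)=\sum_{|\mu_j|_p>1}\log|\mu_j|_p,$$
where $\{\mu_j:j=1,\dots,d_pN\}$ is the multiset of eigenvalues of $\psi$ in some finite extension of $\Q_p$. The remaining task is to identify this multiset in terms of the eigenvalues $\{\lambda_i:i=1,\dots,N\}$ of $\phi$. Restriction of scalars corresponds to taking the relative norm of the characteristic polynomial, so $p_\psi(X) = \prod_{\sigma} \sigma(p_\phi(X))$, where $\sigma$ ranges over the $d_p$ distinct $\Q_p$-embeddings of $K_p$ into a finite Galois extension of $\Q_p$ containing all the $\lambda_i$. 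Hence the multiset $\{\mu_j\}$ equals $\{\sigma(\lambda_i):\sigma,i\}$.

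The final ingredient is the invariance of the $p$-adic norm under these embeddings: since the extension of $|-|_p$ from $\Q_p$ to any finite extension is unique, one has $|\sigma(\lambda_i)|_p=|\lambda_i|_p$ for every embedding $\sigma$ and every index $i$ (in the archimedean case $p=\infty$ this just reduces to complex conjugation preserving the usual absolute value). Consequently,
$$\sum_{|\mu_j|_p>1}\log|\mu_j|_p=\sum_{\sigma}\sum_{|\lambda_i|_p>1}\log|\lambda_i|_p=d_p\sum_{|\lambda_i|_p>1}\log|\lambda_i|_p,$$
which, combined with the displayed equalities from the previous paragraph, yields the corollary. The only genuinely non-formal point is this norm-invariance step; everything else is either the scalar-restriction book-keeping via the topological isomorphism $\alpha$ or a direct invocation of Theorem \ref{yuz-pp}, so I expect this to be the main obstacle — although, once the uniqueness of the extension of $|-|_p$ is invoked, the factor $d_p$ emerges automatically.
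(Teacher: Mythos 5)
Your proof is correct and takes essentially the same route the paper gestures at in the paragraph preceding the corollary: restrict scalars from $K_p$ to $\Q_p$, transport the entropy along the topological conjugation $\alpha$, and apply Theorem \ref{yuz-pp}. In fact your eigenvalue bookkeeping is more careful than the paper's terse statement --- for a general $K_p$-linear $\phi$ the eigenvalues of the restricted-scalars map $\psi$ over $\Q_p$ are the Galois conjugates $\sigma(\lambda_i)$, not literally $d_p$ copies of the $\lambda_i$ as the paper asserts (that only holds when $\phi$ itself arises by scalar extension from a $\Q_p$-linear map, as in the proof of Theorem \ref{yuz-pp}); but since the uniqueness of the extension of $|-|_p$ to a finite extension forces $|\sigma(\lambda_i)|_p=|\lambda_i|_p$, both descriptions give the same sum $\sum_{|\mu_j|_p>1}\log|\mu_j|_p=d_p\sum_{|\lambda_i|_p>1}\log|\lambda_i|_p$, and the corollary follows.
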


\section{Algebraic entropy of endomorphisms of $\Q^N$}\label{proof}

We fix all along this section a positive integer $N$ and an endomorphism $\phi:\Q^N\to \Q^N$.

\subsection{First reduction}

To evaluate the algebraic entropy of $\phi$ one has to consider the growth of the trajectories of {\em all} the finite subsets of $\Q^N$ containing $0$.
We introduce a smaller family of finite subsets of $\Q^N$, that suffices to compute the algebraic entropy of $\phi$, as proved in Proposition \ref{simp1}. 

Let $\{e_i:i=1,\ldots,N\}$ be the canonical base of $\Q^N$ over $\Q$. For every $m\in\N_+$, let
$$E_m=\left\{\sum_{i=1}^Nc_i e_i:c_i=0,\pm1/m,\pm 2/m,\ldots,\pm m/m\right\}.$$ 
The following lemma is an easy application of the definition.

\begin{lemma}
If $m,m'\in\N_+$ and $m'$ divides $m$, then $E_{m'}\subseteq E_m$.
\end{lemma}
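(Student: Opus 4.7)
The plan is to unpack both sides of the claimed inclusion directly from the definition of $E_m$ and exhibit the needed rewriting of coefficients. Since $m'$ divides $m$, I write $m=km'$ for some $k\in\N_+$, and I would reduce the question to the coordinate-wise scalar situation: showing that every admissible coefficient $c\in\{0,\pm 1/m',\ldots,\pm m'/m'\}$ for $E_{m'}$ also appears among the admissible coefficients $\{0,\pm 1/m,\ldots,\pm m/m\}$ for $E_m$.

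The key computation is simply that if $c=j/m'$ with $|j|\le m'$ and $j\in\Z$, then
\[
c=\frac{j}{m'}=\frac{jk}{km'}=\frac{jk}{m},
\]
and the numerator $jk$ is an integer with $|jk|\le m'k=m$. Hence $c$ lies in the prescribed set of coefficients defining $E_m$.

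Now given an arbitrary element $x=\sum_{i=1}^N c_i e_i\in E_{m'}$, every coefficient $c_i$ is of the above form, and by the previous paragraph each $c_i$ can be rewritten as $c_i=j_i/m$ with $j_i\in\Z$ and $|j_i|\le m$. This exhibits $x$ as an element of $E_m$, establishing the inclusion $E_{m'}\subseteq E_m$.

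There is essentially no obstacle here: the entire argument is the observation that multiplying numerator and denominator by $k=m/m'$ converts the canonical presentation of an element of $E_{m'}$ into a valid presentation for $E_m$. The only thing to be slightly careful about is that one must verify the integrality of $jk$ and the bound $|jk|\le m$, both of which are immediate from $k\in\N_+$ and $|j|\le m'$.
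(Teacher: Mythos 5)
Your proof is correct and is exactly the elementary coefficient-rewriting argument the paper has in mind (the paper omits the proof, remarking only that it is ``an easy application of the definition''). Nothing further is needed.
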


Let now  $a\in\Q$. We denote by $\varphi_a:\Q^N\to \Q^N$ the multiplication by $a$, namely $\varphi_a(x)=a\cdot x$ for every $x\in\Q^N$. If $a\neq 0$, then $\varphi_a$ is an automorphism of $\Q^N$ and the diagram
\begin{equation*}
\xymatrix{
\Q^N\ar[r]^{\phi}\ar[d]_{\varphi_a}&\Q^N\ar[d]^{\varphi_a}\\
\Q^N\ar[r]^{\phi} &\Q^N
}
\end{equation*}
commutes, that is, 
\begin{equation}\label{square}
\varphi_a\phi\varphi_a^{-1}=\phi.
\end{equation}

\begin{proposition}\label{simp1}
In the above notations,
${h_A(\phi)=\sup\{H_A(\phi,E_m):m\in\N_+\}.}$
\end{proposition}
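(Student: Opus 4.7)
The inequality $\sup\{H_A(\phi,E_m):m\in\N_+\}\leq h_A(\phi)$ is immediate from the definition of $h_A(\phi)$, since each $E_m$ is a finite (hence compact) neighborhood of $0$ in the discrete group $\Q^N$. So the content of the proposition is the reverse inequality, and my plan is to show that every $C\in\c(\Q^N)$ fits inside a scalar multiple of some $E_m$, and then exploit the commutation relation \eqref{square} together with Proposition \ref{prop}\1 and the monotonicity Lemma \ref{monotonia}\1.

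First I would fix an arbitrary $C\in\c(\Q^N)$. Since $\Q^N$ carries the discrete topology, $C$ is a finite subset containing $0$. Writing each element of $C$ in coordinates with respect to the canonical base $\{e_1,\ldots,e_N\}$, one can clear denominators: there exists $q\in\N_+$ such that $qC\subseteq \Z^N$, and a bound $M\in\N_+$ with $|qc_i|\leq M$ for every coordinate $c_i$ of every element of $C$. Observing that $\varphi_m(E_m)$ equals the set of integer vectors with coordinates bounded in absolute value by $m$, this gives $qC\subseteq \varphi_M(E_M)$, i.e.
\[
C\subseteq \varphi_{M/q}(E_M).
\]
Set $a=M/q\in\Q\setminus\{0\}$, so $C\subseteq \varphi_a(E_M)$, equivalently $\varphi_a^{-1}(C)\subseteq E_M$. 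Note that $\varphi_a^{-1}(C)$ is still in $\c(\Q^N)$, as $\varphi_a$ fixes $0$.

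Next I would invoke the key commutation $\varphi_a\phi\varphi_a^{-1}=\phi$ from \eqref{square}, which expresses $\phi$ as conjugate to itself via the topological isomorphism $\varphi_a^{-1}$. Applying Proposition \ref{prop}\1 with $\alpha=\varphi_a^{-1}$ yields
\[
H_A(\phi,C)=H_A(\phi,\varphi_a^{-1}C).
\]
Combining with $\varphi_a^{-1}(C)\subseteq E_M$ and the monotonicity of $H_A(\phi,-)$ in its second argument (Lemma \ref{monotonia}\1) gives
\[
H_A(\phi,C)=H_A(\phi,\varphi_a^{-1}C)\leq H_A(\phi,E_M)\leq \sup\{H_A(\phi,E_m):m\in\N_+\}.
\]
Taking the supremum over $C\in\c(\Q^N)$ establishes $h_A(\phi)\leq \sup\{H_A(\phi,E_m):m\in\N_+\}$, completing the proof.

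There is no serious obstacle: the only substantive step is the observation that any finite subset of $\Q^N$ lies in $\varphi_a(E_M)$ for suitable $a\in\Q^*$ and $M\in\N_+$, which is pure arithmetic. The conceptual content is that the scaling automorphisms $\varphi_a$ commute with every $\phi\in\End(\Q^N)$ (equation \eqref{square}), so conjugating by $\varphi_a$ does not change $\phi$ but rescales the test set at no cost to the entropy. This reduces the supremum in the definition of $h_A(\phi)$ from all of $\c(\Q^N)$ to the countable cofinal-up-to-scaling family $\{E_m:m\in\N_+\}$, which is exactly what subsequent sections will need.
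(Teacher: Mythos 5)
Your argument is correct and follows essentially the same route as the paper: fit an arbitrary $C\in\c(\Q^N)$ inside a scaled copy $\varphi_a(E_M)$, then use the commutation $\varphi_a\phi\varphi_a^{-1}=\phi$ with the conjugation-invariance from Proposition \ref{prop}(1) and monotonicity from Lemma \ref{monotonia}(1) to reduce to $E_M$. The only cosmetic difference is the bookkeeping: the paper first encloses $C$ in an auxiliary set $S_{s,t}$ and rescales by $a=1/t$, whereas you clear denominators to $\Z^N$ and rescale by $a=M/q$; both yield the same conclusion.
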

\begin{proof}
By definition $h_A(\phi)\geq \sup_{m\in\N_+}H_A(\phi,E_m)$. On the other hand, let $F\in\c(\Q^N)$. There exist $s,t \in\N_+$ such that $F$ is contained in a set of the form
$$S_{s,t}=\left\{\sum_{i=1}^N(a_i/b_i) e_i:a_i=0,\pm 1,\pm 2,\ldots, \pm t;\ b_i=1,2,\ldots,s\right\}.$$ Lemma \ref{monotonia}(1) yields $H_A(\phi, F)\leq H_A(\phi, S_{s,t})$. For $a=1/t$, we have $\varphi_{a}S_{s,t}=E_{s t}$. In view of  Proposition \ref{prop}\1 and \eqref{square}, we obtain $$H_A(\phi,S_{s,t})=H_A(\varphi_{a}\phi \varphi^{-1}_{a}, \varphi_{a}S_{s,t})=H_A(\phi,E_{s t}).$$ Hence, $h_A(\phi)\leq \sup_{m\in\N_+}H_A(\phi,E_m)$. 
\end{proof}

\subsection{Subrings of the rationals}

A non-zero rational number $x$ can be written uniquely in the form $x=a/b$ with $a\in \Z$, $b\in\N_+$ and $(a,b)=1$; so we assume every non-zero rational number to be in this form.

For every subset $\P$ of $\p$, let 
\begin{equation}\label{ZP}
\Z_{(\P)}=\Z\left[1,1/p:p\in\P\setminus\{\infty\}\right]
\end{equation}
be the subring of $\Q$ generated by $1$ and all the elements of the form $1/p$ with $p\in\P\setminus\{\infty\}$. Note that $\Z_{(\P)}$ contains $\Z$ for every choice of $\P$; in particular, $\Z_{(\P)}=\Z$ if $\P=\emptyset$ or $\P=\{\infty\}$. Furthermore, if $\P=\p$, then $\Z_{(\P)}=\Q$ and if $\P=\p\setminus \{p\}$ with $p<\infty$, then $\Z_{(\P)}$ is isomorphic to the localization of $\Z$ at the prime ideal $p\Z$.

By definition, a non-zero rational number $a/b$ belongs to $\Z_{(\P)}$ if and only if all the primes dividing $b$ belong to $\P$. This is expressed equivalently in item (1) of the following lemma in terms of the $p$-adic values (see \eqref{Z_P} in Section \ref{LW-sub} for another equivalent description of $\Z_{(\P)}$). Item (2) is another basic property of the $p$-adic values of elements of $\Z_{(\P)}$ for $p\in\P$.

\begin{lemma}\label{subring_basic}
Let $\P$ be a subset of $\p$.
\begin{enumerate}[\rm (1)]
\item If $x\in\Q$, then $x\in \Z_{(\P)}$ if and only if $|x|_p\leq1$ for every $p\in\p\setminus\P$.
\item If $x\in\Z_{(\P)}$, $x\neq0$ and $|x|_\infty<1$, then $\max\left\{|x|_p:p\in\P\setminus\{\infty\}\right\}> 1$.
\end{enumerate}
\end{lemma}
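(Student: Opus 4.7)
The plan is to reduce both parts to the standard formula for the $p$-adic norm of a reduced fraction: writing a nonzero $x\in\Q$ uniquely as $x=a/b$ with $a\in\Z$, $b\in\N_+$, $\gcd(a,b)=1$, one has for every finite prime $p$ that $|x|_p=p^{v_p(b)-v_p(a)}$. Hence $|x|_p\leq 1$ iff $p\nmid b$, while $p\mid b$ forces $v_p(a)=0$ (by coprimality) and thus $|x|_p=p^{v_p(b)}\geq p>1$. This is the only nontrivial ingredient; everything else is bookkeeping.

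For (1), I would combine the above with the observation, immediate from \eqref{ZP}, that a nonzero $x=a/b$ in lowest terms lies in $\Z_{(\P)}$ precisely when every prime divisor of $b$ belongs to $\P\setminus\{\infty\}$. Thus $x\in\Z_{(\P)}$ iff no finite prime outside $\P$ divides $b$, iff $|x|_p\leq 1$ for every finite prime $p\in\p\setminus\P$. (The case $x=0$ is trivial, and the archimedean place imposes no condition since $\Z\subseteq\Z_{(\P)}$ contains elements of arbitrary absolute value.)

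For (2), take $x\in\Z_{(\P)}\setminus\{0\}$ with $|x|_\infty<1$, and again write $x=a/b$ in lowest terms. Then $|a|/b=|x|_\infty<1$ forces $b>|a|\geq 0$, so $b\geq 2$. Pick any prime divisor $q$ of $b$; by the characterization in (1), $q\in\P\setminus\{\infty\}$. Since $\gcd(a,b)=1$, $q\nmid a$, and hence $|x|_q=q^{v_q(b)}\geq q>1$. Therefore
\[
\max\{|x|_p:p\in\P\setminus\{\infty\}\}\geq|x|_q>1,
\]
as required.

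There is no substantive obstacle: the proof is a direct unwinding of the definition of $\Z_{(\P)}$ and the formula for the $p$-adic valuation of a rational. The only small point worth flagging in the write-up is that the quantifier in (1) effectively runs over finite primes only, since the archimedean absolute value places no restriction on the integers inside $\Z_{(\P)}$; this is automatic once one separates the finite and infinite places in the argument.
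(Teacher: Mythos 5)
Your proof is correct and follows essentially the same route as the paper: item (1) by unwinding the definition via the prime factorization of the denominator in lowest terms, and item (2) by noting that $|x|_\infty<1$ with $x\neq 0$ forces a nontrivial denominator, whose prime divisors must lie in $\P$ by (1). One trivial slip: you write $b>|a|\geq 0$ before concluding $b\geq 2$, but what you actually need (and have, since $x\neq 0$) is $|a|\geq 1$.
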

\begin{proof}
Item (1) follows directly from the definition. The hypotheses of item (2) imply that $x$ has non-trivial denominator and so there is some prime $p$ dividing it, that is $|x|_p>1$. By item (1), $p\in\P$.
\end{proof}

Now we go back to our usual setting, that is, $N$ is a fixed positive integer and $\phi:\Q^N\to \Q^N$ is an endomorphism.

\begin{definition}
Let $M_\phi=(a_{ij})_{i,j}$ be the $N\times N$ rational matrix associated to $\phi$ and let $m$ be a positive integer. The set
$\P(\phi,m)$ is the minimal subset of $\p$ containing $\infty$ and such that $\{1/m\}\cup \{a_{ij}:1\leq i,j\leq N\}\subseteq\Z_{(\P(\phi,m))}$.
Furthermore, let $\Pi(\phi,m)=\P(\phi,m)\setminus\{\infty\}$.

Finally, we let $\P(\phi)=\P(\phi,1)$ and $\Pi(\phi)=\Pi(\phi,1)$.
\end{definition}

In other words, $p\in\p$ belongs to $\P(\phi,m)$ if and only if either $p$ divides the denominator of some $a_{ij}$, for $1\leq i,j\leq N$, or
$p$ divides $m$, or $p=\infty$. So in particular $p\in\Pi(\phi)$ if and only if $||\phi||_p> 1$.

\medskip
The following proposition shows how the subrings $\Z_{(\P(\phi,m))}$ are related with the subsets $E_m$ of $\Q^N$ introduced in the previous subsection.

\begin{proposition}\label{traj-sep}
Let $m$ be a positive integer. Then:
\begin{enumerate}[\rm (1)]
\item $(\Z_{(\P(\phi,m))})^N$ is a $\phi$-invariant subgroup of $\Q^N$ containing $E_m$;
\item $T_n(\phi,E_m)\subseteq (\Z_{(\P(\phi,m))})^N$ for every $n\in\N_+$.
\end{enumerate}
\end{proposition}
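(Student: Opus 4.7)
The statement splits into two parts, and (2) will be an immediate corollary of (1) once we observe that $T_n(\phi, E_m)$ is built from $E_m$ using only addition and applications of $\phi$.

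For part (1), the plan is to verify the three requirements in turn. First, $\Z_{(\P(\phi,m))}$ is a subring of $\Q$ by definition (see \eqref{ZP}), hence its additive group is a subgroup of $(\Q,+)$, so $(\Z_{(\P(\phi,m))})^N$ is a subgroup of $\Q^N$. Second, since $1/m\in\Z_{(\P(\phi,m))}$ by the minimality in the definition of $\P(\phi,m)$, every element of the form $c/m$ with $c\in\{0,\pm1,\ldots,\pm m\}$ lies in $\Z_{(\P(\phi,m))}$; hence every vector of $E_m$ has all coordinates in $\Z_{(\P(\phi,m))}$, i.e., $E_m\subseteq (\Z_{(\P(\phi,m))})^N$. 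Third, for $\phi$-invariance, note that the entries $a_{ij}$ of $M_\phi$ all lie in $\Z_{(\P(\phi,m))}$, again by the definition of $\P(\phi,m)$. Given any $x=(x_1,\ldots,x_N)\in (\Z_{(\P(\phi,m))})^N$, the $i$-th coordinate of $\phi(x)$ is $\sum_{j=1}^N a_{ij}x_j$, a finite sum of products of elements of $\Z_{(\P(\phi,m))}$. Since $\Z_{(\P(\phi,m))}$ is closed under sums and products, this coordinate lies in $\Z_{(\P(\phi,m))}$, proving $\phi((\Z_{(\P(\phi,m))})^N)\subseteq (\Z_{(\P(\phi,m))})^N$.

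For part (2), fix $n\in\N_+$. By part (1) we have $E_m\subseteq (\Z_{(\P(\phi,m))})^N$, and $(\Z_{(\P(\phi,m))})^N$ is $\phi$-invariant, so by induction $\phi^k E_m\subseteq (\Z_{(\P(\phi,m))})^N$ for every $k=0,1,\ldots,n-1$. Since $(\Z_{(\P(\phi,m))})^N$ is closed under addition, the sum $T_n(\phi,E_m)=E_m+\phi E_m+\ldots+\phi^{n-1}E_m$ is contained in $(\Z_{(\P(\phi,m))})^N$, as required.

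There is essentially no hard step here: the entire argument is a bookkeeping check that the ring $\Z_{(\P(\phi,m))}$ was defined precisely to contain the data needed to close up the action of $\phi$ on $E_m$ under the ring operations. The only subtle point worth flagging is that one must use the definition of $\P(\phi,m)$ in two places separately, namely that $1/m\in\Z_{(\P(\phi,m))}$ (for containment of $E_m$) and that the matrix entries $a_{ij}$ all lie in $\Z_{(\P(\phi,m))}$ (for $\phi$-invariance).
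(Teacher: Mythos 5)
Your proof is correct and follows the same approach as the paper's, just with more of the bookkeeping made explicit (the paper compresses part (1) to a single observation about the matrix entries and the subring property, and dismisses part (2) with ``follows from (1)''). Nothing to change.
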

\begin{proof}
(1) Let $x\in (\Z_{(\P(\phi,m))})^N$. The components of $\phi(x)$ are sums of components of $x$ multiplied by coefficients of the matrix of $\phi$. 
By the definition of $\P(\phi,m)$ and since $\Z_{(\P(\phi,m))}$ is a subring of $\Q$, we have that $\phi(x)\in(\Z_{(\P(\phi,m))})^N$.

\smallskip
(2) follows from (1).
\end{proof}

\subsection{From cardinality to measure}

We begin this subsection fixing some notation. 
For $p\in\p$, let $\alpha_p:\Q^N\to \Q_p^N$ be the diagonal map of the natural embedding of $\Q$ in $\Q_p$.
Moreover, let $\P$ be a fixed finite subset of $\p$ containing $\infty$. The finite product $\prod_{p\in\P}\Q_p^N$ is an LCA group. 
For every $\varepsilon\in \R_+$, we set
$$\D_\P(\varepsilon)=D_{\infty,N}(\varepsilon)\times \prod_{p\in\P, p<\infty}D_{p,N}(1)\ \subseteq \prod_{p\in\P}\Q_p^N.$$
Furthermore, we denote the diagonal map of the embeddings $\alpha_p:\Q^N\to\Q_p^N$ by
$$\alpha_\P=\prod_{p\in\P}\alpha_p:\Q^N\to \prod_{p\in\P}\Q_p^N.$$


%

In these terms, we give a useful consequence of Proposition \ref{traj-sep}(2):

\begin{corollary}\label{traj-p-nonsep}
Let $m,n\in\N_+$ and $p\in\p\setminus\P(\phi,m)$. Then $\alpha_p(T_n(\phi,E_m))\subseteq D_{p,N}(1)$.
\end{corollary}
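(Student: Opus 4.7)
The plan is to derive this as an almost immediate consequence of Proposition \ref{traj-sep}(2), combined with the characterization of $\Z_{(\P)}$ in terms of $p$-adic norms given by Lemma \ref{subring_basic}(1). The key observation is that $p \notin \P(\phi,m)$ is precisely the condition which forces every element of $\Z_{(\P(\phi,m))}$ to have $p$-adic norm at most $1$, and the discs $D_{p,N}(1)$ are defined via the max-norm on coordinates, so the bound transfers componentwise.

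More concretely, I would first invoke Proposition \ref{traj-sep}(2) to get the inclusion $T_n(\phi,E_m) \subseteq (\Z_{(\P(\phi,m))})^N$. Then, picking any $x = (x_1,\ldots,x_N) \in T_n(\phi,E_m)$ and any prime $p \in \p \setminus \P(\phi,m)$, I would apply Lemma \ref{subring_basic}(1) to each coordinate $x_i \in \Z_{(\P(\phi,m))}$ to conclude $|x_i|_p \leq 1$. Taking the maximum over $i$, this yields $|\alpha_p(x)|_p \leq 1$, i.e., $\alpha_p(x) \in D_{p,N}(1)$, which is the desired inclusion.

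There is essentially no obstacle here; it is bookkeeping that connects the algebraic description (elements in $\Z_{(\P)}$) with the topological/metric description (discs in $\Q_p^N$). The only subtlety worth noting is that $\alpha_p$ is defined as the diagonal extension of the natural embedding $\Q \hookrightarrow \Q_p$, so that for $x \in \Q^N$ the $p$-adic norm $|\alpha_p(x)|_p$ is computed using exactly the rational $p$-adic norms of the coordinates of $x$; once this is made explicit, the conclusion is immediate from the two cited facts.
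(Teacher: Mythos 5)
Your proof is correct and follows the same route as the paper: invoke Proposition \ref{traj-sep}(2) to land in $(\Z_{(\P(\phi,m))})^N$ and then use Lemma \ref{subring_basic}(1) componentwise with the max-norm to conclude. In fact you make explicit the intermediate appeal to Lemma \ref{subring_basic}(1) that the paper's one-line proof leaves implicit.
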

\begin{proof}
If $x\in T_n(\phi,E_m)$, then $|x|_p\leq1$ by Proposition \ref{traj-sep}(2), that is, $\alpha_p(x)\in D_{p,N}(1)$.
\end{proof}

Moreover, one can state a slightly different interpretation of Lemma \ref{subring_basic}(2). In fact, given a subset $\P$ of $\p$ such that $\infty\in\P$, and two distinct elements $x,y\in\Z_{(\P)}$, at least one among the $p$-adic distances $|x-y|_p$ (with $p\in\P$) is ``large''. Roughly speaking, the diagonal embedding $\Q\to\prod_{p\in\P}\Q_p$ ``separates" $x$ and $y$. This fact 
is fundamental for the following result, that explains how we pass from a finite subset $\mathcal F$ of $\Q^N$ to a measurable subset of a finite product $\prod_{p\in\P}\Q_p^N$ whose Haar measure coincides with the size of $\mathcal F$. To this end we use a finite subset $\P$ of $\p$ containing $\infty$ and such that $\F\subseteq (\Z_{(\P)})^N$. When this result applies in the sequel, $\F$ is always an $n$-th $\phi$-trajectory and $\P$ is of the form $\P(\phi,m)$, for some endomorphism $\phi$ of $\Q^N$ and some positive integers $n,m$.

\begin{proposition}\label{coro-mis}
Let $\P$ be a finite subset of $\p$ containing $\infty$,  and $k$ an integer $\geq 3$.
\begin{enumerate}[\rm (1)]
\item If $x,y\in(\Z_{(\P)})^N$ and $x\neq y$, then
$$(\alpha_\P(x)+ \D_\P(1/k))\cap (\alpha_\P(y)+\D_\P(1/k))=\emptyset.$$
\item If $\mathcal F\subseteq (\Z_{(\P)})^N$ is finite, then 
$$\mu\left(\alpha_\P(\mathcal F)+\D_{\P}(1/k)\right)=|\mathcal F|,$$
where $\mu$ is the Haar measure on $\prod_{p\in\P}\Q_p^N$ such that $\mu(\D_\P(1/k))=1$.
\end{enumerate}
\end{proposition}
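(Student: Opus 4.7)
The plan is to reduce (2) to (1) via translation-invariance of Haar measure, and to prove (1) by pulling back the hypothetical overlap to the difference $x-y$ and obtaining a contradiction with Lemma \ref{subring_basic}(2).

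For part (1), I would argue by contradiction: suppose there exist $u,v\in\D_\P(1/k)$ with $\alpha_\P(x)+u=\alpha_\P(y)+v$, so that $\alpha_\P(x-y)=v-u$. Write $z=x-y\in(\Z_{(\P)})^N\setminus\{0\}$ and pick an index $i$ with $z_i\neq 0$. On the archimedean coordinate, $|z_i|_\infty\leq 2/k<1$ since $k\geq 3$. On each finite $p\in\P$, the ultrametric inequality gives $D_{p,N}(1)-D_{p,N}(1)=D_{p,N}(1)$, hence $|z_i|_p\leq 1$. On the remaining places $p\in\p\setminus\P$, Lemma \ref{subring_basic}(1) applied to $z_i\in\Z_{(\P)}$ yields $|z_i|_p\leq 1$. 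Thus $|z_i|_p\leq 1$ for every $p\in\P\setminus\{\infty\}$ while $|z_i|_\infty<1$, contradicting Lemma \ref{subring_basic}(2). This proves (1).

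For part (2), the disjointness furnished by (1) lets me write
\[
\alpha_\P(\F)+\D_\P(1/k)\;=\;\bigsqcup_{x\in\F}\bigl(\alpha_\P(x)+\D_\P(1/k)\bigr),
\]
so by translation-invariance of the Haar measure $\mu$,
\[
\mu\bigl(\alpha_\P(\F)+\D_\P(1/k)\bigr)=\sum_{x\in\F}\mu\bigl(\alpha_\P(x)+\D_\P(1/k)\bigr)=|\F|\cdot\mu(\D_\P(1/k))=|\F|,
\]
using the normalization $\mu(\D_\P(1/k))=1$.

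The only delicate point is part (1), and within it the key observation is that $\D_\P(1/k)-\D_\P(1/k)$ stays inside $D_{\infty,N}(2/k)\times\prod_{p\in\P,\,p<\infty}D_{p,N}(1)$ because the compact neighborhoods at the non-archimedean places are actually \emph{subgroups} of $\Q_p^N$. The hypothesis $k\geq 3$ is used precisely to ensure $2/k<1$, so that the archimedean component of $z$ becomes strictly less than $1$ and Lemma \ref{subring_basic}(2) is applicable. No additional ingredient beyond elementary properties of $p$-adic norms and the basic lemma on $\Z_{(\P)}$ is required.
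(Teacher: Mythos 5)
Your proof is correct and takes essentially the same approach as the paper; you phrase part (1) as a direct contradiction while the paper structures it as a two-case split, but both arguments rest on the same ingredients: Lemma \ref{subring_basic}(2), the ultrametric property of $D_{p,N}(1)$, and $k\geq 3$ forcing $2/k<1$. Part (2) is identical to the paper's argument via disjointness and translation-invariance.
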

\begin{proof}
(1) Denote by $x_i$ and $y_i$ (with $i=1,\ldots,N$) the components of $x$ and $y$ in the canonical base of $\Q^N$ over $\Q$. If there exists $i\in\{1,\ldots,N\}$ such that $|x_i-y_i|_\infty\geq 1$, then 
$$(\alpha_\infty(x)+D_{\infty,N}(1/k))\cap (\alpha_{\infty}(y)+D_{\infty,N}(1/k))=\emptyset.$$ 
On the other hand, if $|x_i-y_i|_\infty< 1$ for every $i=1,\ldots,N$, we can fix $i\in\{1,\ldots,N\}$ and use Proposition \ref{subring_basic}(2) to find a finite $p$ in $\P$ such that $|x_i-y_i|_p>1$. Therefore
$$(\alpha_p(x)+D_{p,N}(1))\cap (\alpha_p(y)+D_{p,N}(1))=\emptyset.$$

\smallskip
(2) Let $\mathcal F=\{f_i:i=1,\ldots, h\}$ for some positive integer $h$. We can suppose $f_i\neq f_j$ whenever $1\leq i\neq j \leq h$. By item (1) we have that $\bigcup_{i=1}^{h}(\alpha_\P(f_i)+\D_{\P}(1/k))$ is a disjoint union. By the definition of Haar measure we obtain
$\mu(\bigcup_{i=1}^{h}\alpha_\P(f_i)+\D_{\P}(1/k))=h=|\mathcal F|$. 
\end{proof}

\subsection{The $p$-adic contributions to the algebraic entropy}

All along this subsection we fix $m\in\N_+$. 
For every finite $p\in\p$ and $n\in\N_+$ we write
$$T_n^{p}(\phi,E_m)=\alpha_p T_n(\phi,E_m)+D_{p,N}(1),$$
for $p=\infty$ and $k\in\N_+$ with $k\geq 3$
$$T_n^{\infty}(\phi,E_m,k)=\alpha_\infty T_n(\phi,E_m)+D_{\infty,N}(1/k),$$
and
$$T_n^*(\phi,E_m,k)=\alpha_{\P(\phi,m)} T_n(\phi,E_m)+\D_{\P(\phi,m)}(1/k).$$

\begin{definition}
Consider a finite $p\in\p$ and a Haar measure $\mu_p$ on $\Q_p^N$. The {\em $p$-adic contribution to the algebraic entropy of $\phi$ at $E_m$} is
$$H^{p}(\phi, E_m)=\limsup_{n\to\infty}\frac{\log\mu_{p}(T^{p}_n(\phi, E_m))}{n}.$$
Consider also a Haar measure $\mu_\infty$ on $\Q_\infty^N=\R^N$ (for example one can take the usual Lebesgue measure of $\R^N$). The {\em $\infty$-adic contribution to  the algebraic entropy of $\phi$ at $E_m$} is
$$H^{\infty}(\phi, E_m)=\limsup_{n\to\infty}\frac{\log\mu_{\infty}(T^{\infty}_n(\phi, E_m,k))}{n}.$$
\end{definition}

Let us start with the following easy observation

\begin{lemma}\label{contributii}
If $p\in\p\setminus \P(\phi)$, then $H^p(\phi,E_m)=0$.
\end{lemma}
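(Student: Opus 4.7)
My plan is to show that for $p \in \p \setminus \P(\phi)$ (automatically $p$ is finite since $\infty \in \P(\phi)$), the set $T_n^p(\phi, E_m)$ is trapped inside a fixed compact disc of $\Q_p^N$ independent of $n$, which forces the entropy contribution to vanish.

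First I would observe that the hypothesis $p \notin \P(\phi) = \P(\phi,1)$ means that $p$ does not divide the denominator of any matrix coefficient $a_{ij}$ of $\phi$. Consequently, $\|\phi\|_p \leq 1$ (with the norm from \eqref{nnnn}), so by \eqref{imm-palle} the induced endomorphism $\phi_p$ of $\Q_p^N$ satisfies $\phi_p(D_{p,N}(r)) \subseteq D_{p,N}(r)$ for every $r \in \R_+$.

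Next I would control $\alpha_p(E_m)$. The coefficients $c_i$ appearing in elements of $E_m$ are rationals of the form $k/m$ with $|k| \leq m$, so $|c_i|_p \leq |1/m|_p = p^{v_p(m)}$. Setting $r = p^{v_p(m)} \geq 1$, this gives $\alpha_p(E_m) \subseteq D_{p,N}(r)$. Since $\phi_p$ commutes with the embedding $\alpha_p$ and preserves $D_{p,N}(r)$, each iterate $\phi_p^j(\alpha_p(E_m))$ is also contained in $D_{p,N}(r)$. Because $p$ is finite, the ultrametric inequality makes $D_{p,N}(r)$ an additive subgroup of $\Q_p^N$, so the sum $\alpha_p(T_n(\phi,E_m)) = T_n(\phi_p, \alpha_p(E_m))$ stays inside $D_{p,N}(r)$. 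Adding $D_{p,N}(1) \subseteq D_{p,N}(r)$ preserves this inclusion, yielding $T_n^p(\phi, E_m) \subseteq D_{p,N}(r)$ for every $n \in \N_+$.

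Finally, fixing any Haar measure $\mu_p$ on $\Q_p^N$, we get $\mu_p(T_n^p(\phi, E_m)) \leq \mu_p(D_{p,N}(r))$, a finite constant independent of $n$, so
\[
H^p(\phi,E_m) = \limsup_{n\to\infty} \frac{\log \mu_p(T_n^p(\phi,E_m))}{n} \leq \lim_{n\to\infty} \frac{\log \mu_p(D_{p,N}(r))}{n} = 0.
\]
The reverse inequality $H^p(\phi,E_m) \geq 0$ follows because $D_{p,N}(1) \subseteq T_n^p(\phi,E_m)$ yields $\mu_p(T_n^p(\phi,E_m)) \geq \mu_p(D_{p,N}(1)) > 0$.

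I do not anticipate a real obstacle here: the only subtle point is that $p$ might divide $m$ (so $E_m$ itself need not sit inside $D_{p,N}(1)$), which is why I need the slightly larger disc $D_{p,N}(p^{v_p(m)})$ and must rely on the ultrametric property to keep the trajectory inside this subgroup. Corollary \ref{traj-p-nonsep} alone would handle only the easier case $p \notin \P(\phi,m)$.
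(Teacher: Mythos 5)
Your proof is correct and follows essentially the same route as the paper's: observe $\|\phi\|_p\leq 1$, bound $\alpha_p(E_m)$ by a fixed disc, and use the ultrametric inequality so that the whole trajectory plus $D_{p,N}(1)$ stays inside a single compact disc of measure independent of $n$. The only difference is that you make the radius explicit as $p^{v_p(m)}$ and explicitly record the trivial reverse inequality $H^p(\phi,E_m)\geq 0$, both of which the paper leaves implicit.
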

\begin{proof}
Choose a prime $p\in\p\setminus \P(\phi)$, then there exists $h\in\N_+$ such that $\alpha_p E_m\subseteq D_{p,N}(h)$. By definition of $\P(\phi)$, we have that $||\phi||_p\leq 1$ and so, $|\phi(x)|_p\leq ||\phi||_p|x|_p\leq h$ for all $x\in E_m$. In particular, using the fact that $|-|_p$ is non-Archimedean, this implies that $T_n^p(\phi,E_m)=\alpha_p T_n(\phi, E_m) +D_{p,N}(1)\subseteq D_{p,N}(h)$. For a given Haar measure $\mu$ on $\Q_p^N$ we get
$$H^p(\phi,E_m)\leq \limsup_{n\to\infty}\frac{\log\mu(D_{p,N}(h))}{n}=0,$$
as desired. 
\end{proof}

\medskip
For a fixed $p\in\p$, as noted above it is possible to extend $\phi$ to a $\Q_p$-linear endomorphism $\phi_p$ of the $\Q_p$-vector space $\Q_p^N$ extending the scalars, that is, $$\phi_p=\phi\otimes_{\Q}id_{\Q_p}:\Q_p^N\to \Q_p^N.$$ This means that $\phi_p\alpha_p(x)=\alpha_p\phi(x)$ for every $x\in\Q^N$. So $\phi_p$ and $\phi$ are represented by the same matrix.

\smallskip
We prove now some technical results that allow us to bound the $p$-adic contributions to the algebraic entropy of $\phi$ from above using the trajectories and from below using the minor trajectories of $\phi_p$ in $\Q_p^N$. 

We start with the following lemma, which applies in the proofs of
Propositions \ref{appr-p} and \ref{appr}.

\begin{lemma}\label{cubi-palle}
Let $p\in\p$, $k\in\N_+$ with $k\geq 3$, and let $h$ be the maximal non-negative integer such that $p^h\leq k$.
\begin{enumerate}[\rm (1)]
\item If $p<\infty$, then $D_{p,N}(k)\subseteq \alpha_p E_{m}+D_{p,N}(1)$  for every $m\in\N_+$ such that $p^h|m$.
\item For every $m\geq k$, $D_{\infty,N}(1)\subseteq \alpha_\infty E_m+D_{\infty,N}(1/k)$. 
\end{enumerate}
\end{lemma}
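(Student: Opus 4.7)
The plan is to reduce both statements to a one-dimensional, coordinate-wise approximation. Since $D_{p,N}(\varepsilon)$, $D_{\infty,N}(\varepsilon)$ and $E_m$ are all products over the canonical base of $\Q^N$ (recall Remark \ref{rm1} for the disc and the very definition of $E_m$), it suffices in each case to show the analogous statement in dimension one and then take components.

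For part (2), I would argue as follows. Let $x=(x_1,\ldots,x_N)\in D_{\infty,N}(1)$, so $|x_i|_\infty\leq 1$ for every $i$. The set $\{c/m:c\in\Z,\ -m\leq c\leq m\}$ is a uniform grid in $[-1,1]$ with spacing $1/m$, so rounding $x_i$ to the nearest grid point $c_i/m$ yields
$$|x_i-c_i/m|_\infty\leq\frac{1}{2m}\leq\frac{1}{2k}<\frac{1}{k},$$
using the hypothesis $m\geq k$. Assembling the $c_i$ gives $y=\sum_{i=1}^N(c_i/m)e_i\in E_m$ with $x-\alpha_\infty y\in D_{\infty,N}(1/k)$.

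For part (1), let $x=(x_1,\ldots,x_N)\in D_{p,N}(k)$ and set $v=v_p(m)$, so that $|m|_p=p^{-v}$. Two key bookkeeping steps occur here. First, because $|x_i|_p$ is always a (possibly negative) power of $p$ and $p^h$ is, by definition, the largest such power not exceeding $k$, the inequality $|x_i|_p\leq k$ forces $|x_i|_p\leq p^h$. Second, the assumption $p^h\mid m$ gives $v\geq h$, whence $|mx_i|_p\leq p^{h-v}\leq 1$; thus $mx_i\in\Z_p$. By density of $\Z$ in $\Z_p$, pick an integer $c_i$ with $0\leq c_i<p^v$ and $c_i\equiv mx_i\pmod{p^v}$, so that $|mx_i-c_i|_p\leq p^{-v}=|m|_p$ and therefore $|x_i-c_i/m|_p\leq 1$. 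Finally, since $p^v\mid m$ we have $c_i<p^v\leq m$, which places $c_i/m$ in the coordinate set of $E_m$. Assembling the $c_i$ again yields $y\in E_m$ with $x-\alpha_p y\in D_{p,N}(1)$.

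The main (and only real) obstacle is the interlocking of the three quantities $k$, $h$, and $v_p(m)$ in part (1): one must use $p^h\leq k$ to shrink the $p$-adic size of each coordinate, then $p^h\mid m$ to guarantee $v_p(m)\geq h$ and consequently $mx_i\in\Z_p$, and finally $p^{v_p(m)}\mid m$ to fit the resulting integer representatives inside $\{-m,\ldots,m\}$. Part (2) is a routine real-variable rounding argument whose only subtlety is the elementary bound $1/(2m)\leq 1/k$.
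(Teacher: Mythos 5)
Your proof is correct and follows essentially the same route as the paper's: reduce to $N=1$ coordinate-by-coordinate, and in part (1) peel off the fractional part of the $p$-adic expansion (the paper truncates the expansion explicitly at $p^{-1}$, while you phrase it as reduction modulo $p^{v_p(m)}$, which is the same thing up to the harmless over-precision when $v_p(m)>h$), and in part (2) do the obvious real rounding, which the paper simply dismisses as clear. There is no meaningful divergence in method.
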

\begin{proof}
By Remark \ref{rm1}, it suffices to prove the result in the case $N=1$.

\1 Let $m\in\N_+$ be such that $p^h|m$. If $x\in D_p(k)$ then $|x|_p\leq p^h$ (see Remark \ref{rm}). Write the $p$-adic expansion
$$x=x_{-h} p^{-h}+ \ldots +x_{-1}p^{-1}+x_{0} +\ldots,$$
with $0\leq x_j< p$ for every $j=-h,\ldots,0,\ldots$. Let now $y$ be a rational number with ``bounded" $p$-adic expansion of the form
$$y=x_{-h} p^{-h}+ \ldots +x_{-1} p^{-1}. $$
  It is then clear that $x-y\in D_p(1)$. Since $y$ is a rational number with denominator $p^h$ and numerator between $0$ and $hp^h$,  there exists $z\in \Z\subseteq D_p(1)$ such that $y-z=w$ has denominator $p^h$ and numerator between $0$ and $p^h$. Therefore, $w\in \alpha_p E_m$ and $x-w\in D_p(1)$, that gives $x\in\alpha_p E_m+D_p(1)$ as desired.

\smallskip
\2 This is clear.
\end{proof}

\begin{proposition}\label{appr-p}
Let $m,n\in\N_+$ and let $p\in\p$ be finite. Then:
\begin{enumerate}[\rm (1)]
\item $T^{p}_n(\phi,E_m)\subseteq T_n(\phi_p,D_{p,N}(|1/m|_p))$;
\item $T_n^\leq (\phi_p,D_{p,N}(1)) \subseteq T^{p}_n(\phi,E_{m})$, if $||\phi_p||_p$ divides $m$.
\end{enumerate}
\end{proposition}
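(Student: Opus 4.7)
For part (1), my plan is to exploit the fact that $p$-adic balls in $\Q_p^N$ are subgroups when $p$ is finite. First I would note that every element of $E_m$ has coordinates of the form $c/m$ with $c\in\Z$, so $|c/m|_p = |c|_p\cdot|1/m|_p \leq |1/m|_p$; hence $\alpha_p E_m \subseteq D_{p,N}(|1/m|_p)$. Using the intertwining identity $\alpha_p \phi = \phi_p \alpha_p$ (so $\alpha_p \phi^i E_m = \phi_p^i \alpha_p E_m$) and taking trajectories, one has
\[
\alpha_p T_n(\phi, E_m) \;=\; \sum_{i=0}^{n-1}\phi_p^i \alpha_p E_m \;\subseteq\; \sum_{i=0}^{n-1}\phi_p^i D_{p,N}(|1/m|_p) \;=\; T_n(\phi_p, D_{p,N}(|1/m|_p)).
\]
Because $p$ is finite, each ball $D_{p,N}(r)$ is an additive subgroup by the ultrametric inequality, hence so is each $\phi_p^i D_{p,N}(|1/m|_p)$ (image of a subgroup under a group homomorphism), and so is the finite sum $T_n(\phi_p, D_{p,N}(|1/m|_p))$. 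Since $|1/m|_p \geq 1$, this subgroup also contains $D_{p,N}(1)$, so the sum $\alpha_p T_n(\phi, E_m) + D_{p,N}(1)$ is contained in it as well, yielding (1).

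For part (2), the plan is an iterative decomposition driven by Lemma \ref{cubi-palle}(1). Fix $z \in T_n^\leq(\phi_p, D_{p,N}(1))$ and write $z = u + \phi_p^{n-1}(v)$ with $u,v\in D_{p,N}(1)$. Setting $y_0 = v$, I would build recursively $w_i \in E_m$ and $y_{i+1} \in D_{p,N}(1)$ by writing
\[
\phi_p(y_i) \in D_{p,N}(||\phi_p||_p) \subseteq \alpha_p E_m + D_{p,N}(1), \qquad \phi_p(y_i) = \alpha_p(w_i) + y_{i+1},
\]
where the inclusion is Lemma \ref{cubi-palle}(1) applied at $k = ||\phi_p||_p$; the hypothesis $||\phi_p||_p \mid m$ is exactly what supplies the divisibility condition needed to invoke the lemma at every step. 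A routine induction then gives
\[
\phi_p^{n-1}(v) = \alpha_p\!\Bigl(\sum_{i=0}^{n-2}\phi^{n-2-i}(w_i)\Bigr) + y_{n-1},
\]
with the sum inside $\alpha_p$ belonging to $T_{n-1}(\phi, E_m) \subseteq T_n(\phi, E_m)$. Since $D_{p,N}(1)$ is a subgroup, $u + y_{n-1} \in D_{p,N}(1)$, whence $z \in \alpha_p T_n(\phi, E_m) + D_{p,N}(1) = T_n^p(\phi, E_m)$.

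The main obstacle will be of an interpretive rather than conceptual nature: reading the hypothesis "$||\phi_p||_p$ divides $m$" so that Lemma \ref{cubi-palle}(1) triggers at every step. The lemma requires the \emph{largest power of $p$ below $||\phi_p||_p$} to divide $m$, so I would read the hypothesis this way (it coincides with $||\phi_p||_p\mid m$ in the good case when the operator norm is itself a power of $p$, which is the natural setting in the non-Archimedean regime). Once this is fixed, part (1) is a bookkeeping argument about $p$-adic subgroups, and part (2) is an $(n-1)$-fold iteration that bootstraps a single-step approximation on $D_{p,N}(||\phi_p||_p)$ into an approximation on the much larger set $\phi_p^{n-1}D_{p,N}(1) \subseteq D_{p,N}(||\phi_p||_p^{n-1})$ by repeatedly absorbing the error of one step into the next application of $\phi_p$.
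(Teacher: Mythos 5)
Your proof is correct and follows essentially the same route as the paper. For part (1) both arguments reduce to the ultrametric fact that $D_{p,N}(1)$ gets absorbed into $D_{p,N}(|1/m|_p)$ (the paper states $D_{p,N}(1)+D_{p,N}(|1/m|_p)=D_{p,N}(|1/m|_p)$ directly; you route through the observation that $T_n(\phi_p,D_{p,N}(|1/m|_p))$ is a subgroup containing $D_{p,N}(1)$, which is the same point). For part (2), your explicit unwinding $\phi_p(y_i)=\alpha_p(w_i)+y_{i+1}$ is the unrolled form of the paper's induction: at each step Lemma \ref{cubi-palle}(1) is invoked to write an element of $D_{p,N}(\|\phi_p\|_p)$ as a point of $\alpha_p E_m$ plus an error in $D_{p,N}(1)$, and the error is fed into the next application of $\phi_p$. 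Your caveat about reading ``$\|\phi_p\|_p$ divides $m$'' as the divisibility of $m$ by the largest power $p^h\leq\|\phi_p\|_p$ is a fair and careful gloss: it is exactly the condition Lemma \ref{cubi-palle}(1) demands, and the paper's choice of the row-sum norm (rather than $\max_{i,j}|a_{ij}|_p$, which would automatically be a power of $p$) does leave that phrase slightly imprecise as written.
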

\begin{proof}
\1 Since 
$ \alpha_p T_n(\phi,E_m)=\alpha_p(E_m+\ldots+\phi^{n-1}E_m)=\alpha_p E_m +\ldots+\phi_p^{n-1}\alpha_p E_m$, since
$\alpha_p E_m\subseteq D_{p,N}(|1/m|_p)$, and as $D_{p,N}(1)+D_{p,N}(|1/m|_p)=D_{p,N}(|1/m|_p)$ by the strong triangular inequality, we obtain
$$T^{p}_n(\phi,E_m)= \alpha_p T_n(\phi,E_m)+D_{p,N}(1)\subseteq T_n(\phi_p,D_{p,N}(|1/m|_p))+D_{p,N}(1)\subseteq T_n(\phi_p,D_{p,N}(|1/m|_p)) .$$

\noindent
\2 
We use induction on $n\geq 1$ to prove that $T_n^\leq (\phi_p,D_{p,N}(1))  \subseteq T^{p}_n(\phi,E_{m})$. 

For $n=1$ it is enough to notice that $T_1^\leq (\phi_p,D_{p,N}(1))=D_{p,N}(1)$ is clearly contained in $T_1^{p}(\phi,E_m)$. So let us prove that, if
\begin{equation}\label{x} 
T_n^\leq (\phi_p,D_{p,N}(1)) \subseteq T^{p}_n(\phi,E_m)
\end{equation}
for some $n\in\N_+$ then $T_{n+1}^\leq (\phi_p,D_{p,N}(1)) \subseteq T^{p}_{n+1}(\phi,E_m)$. In particular, we need to show that
$$x+D_{p,N}(1)\subseteq T^{p}_{n+1}(\phi,E_m)\ \text{for every}\ x\in \phi_p^{n}D_{p,N}(1). $$
Indeed, given $x\in \phi_p^{n}D_{p,N}(1)$, there exists $y\in \phi_p^{n-1}D_{p,N}(1)$ such that $\phi_p(y)=x$. 
Furthermore, by \eqref{x} we have that $y\in T^{p}_n(\phi,E_m)$, and so there exists $z\in T_n(\phi,E_m)$ such that $y\in \alpha_p(z)+D_{p,N}(1)$. This shows that 
\begin{equation}\label{x''}
x=\phi_p(y)\in \phi_p(\alpha_p(z))+\phi_p D_{p,N}(1)\subseteq \alpha_p(\phi(z))+D_{p,N}(||\phi_p||_p) ,
\end{equation}
by \eqref{imm-palle}. As we supposed that $||\phi_p||_p$ divides $m$, we can use Lemma \ref{cubi-palle}(1) to show that
\begin{equation}\label{x'''} 
D_{p,N}(||\phi_p||_p)\subseteq \alpha_p E_m+D_{p,N}(1).
\end{equation}
Now \eqref{x''} and \eqref{x'''} together give
$$x\in \alpha_p(\phi(z))+D_{p,N}(||\phi_p||_p)\subseteq \alpha_p \phi T_n(\phi,E_m)+ \alpha_p E_m+D_{p,N}(1) =\alpha_p T_{n+1}(\phi,E_m)+D_{p,N}(1).$$
So, $x+ D_{p,N}(1)\subseteq \alpha_p T_{n+1}(\phi,E_m)+D_{p,N}(1)+ D_{p,N}(1)=T^{p}_{n+1}(\phi,E_m)$.
\end{proof}  

The following proposition is the counterpart of Proposition \ref{appr-p} for $p=\infty$.

\begin{proposition}\label{appr}
Let $k=\max\{\lceil||\phi_\infty||_\infty+1\rceil,\ 3\}$ and $m,n\in\N_+$ with $m\geq k$. Then:
\begin{enumerate}[\rm (1)]
\item $T^{\infty}_n(\phi,E_m,k)\subseteq T_n(\phi_\infty,D_{\infty,N}(2))$;
\item $T_n^\leq(\phi_\infty,D_{\infty,N}(1/k))\subseteq T^\infty_n(\phi,E_{m},k)$.
\end{enumerate}
\end{proposition}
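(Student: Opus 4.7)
Proposition~\ref{appr} is the Archimedean counterpart of Proposition~\ref{appr-p}, and my strategy is to transplant that proof, using Lemma~\ref{cubi-palle}(2) in place of Lemma~\ref{cubi-palle}(1) and paying careful attention to the failure of the ultrametric inequality at the real place.

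For part~(1), I start from the observation that $E_m\subseteq D_{\infty,N}(1)$ by the very definition of $E_m$. Therefore $\alpha_\infty E_m\subseteq D_{\infty,N}(1)$ and, by monotonicity of the trajectory operator under set inclusion, $\alpha_\infty T_n(\phi,E_m)=T_n(\phi_\infty,\alpha_\infty E_m)\subseteq T_n(\phi_\infty,D_{\infty,N}(1))$. To accommodate the extra summand $D_{\infty,N}(1/k)$ appearing in $T_n^\infty(\phi,E_m,k)$, I absorb it into the $0$-th term of the trajectory using $D_{\infty,N}(1)+D_{\infty,N}(1/k)\subseteq D_{\infty,N}(2)$ (valid since $k\geq 3$), while keeping $\phi_\infty^iD_{\infty,N}(1)\subseteq\phi_\infty^iD_{\infty,N}(2)$ for $i\geq 1$. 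This delivers $T_n^\infty(\phi,E_m,k)\subseteq T_n(\phi_\infty,D_{\infty,N}(2))$.

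For part~(2), I proceed by induction on $n$ along the lines of the proof of Proposition~\ref{appr-p}(2). The base case $n=1$ is immediate because $0\in E_m$ forces $D_{\infty,N}(1/k)\subseteq\alpha_\infty E_m+D_{\infty,N}(1/k)$. For the inductive step it is enough to show $x+D_{\infty,N}(1/k)\subseteq T_{n+1}^\infty(\phi,E_m,k)$ for every $x\in\phi_\infty^n D_{\infty,N}(1/k)$. Writing $x=\phi_\infty(y)$ with $y\in\phi_\infty^{n-1}D_{\infty,N}(1/k)\subseteq T_n^\leq(\phi_\infty,D_{\infty,N}(1/k))$, the inductive hypothesis supplies $z\in T_n(\phi,E_m)$ and $d\in D_{\infty,N}(1/k)$ with $y=\alpha_\infty(z)+d$. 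The key numerical input is that the specific $k$ from the statement is chosen precisely so that $\phi_\infty(d)\in D_{\infty,N}(\|\phi_\infty\|_\infty/k)\subseteq D_{\infty,N}(1)$; Lemma~\ref{cubi-palle}(2) (which applies because $m\geq k$) then yields $e\in E_m$ and $d'\in D_{\infty,N}(1/k)$ with $\phi_\infty(d)=\alpha_\infty(e)+d'$. Hence $x=\alpha_\infty(\phi(z)+e)+d'$ sits inside $\alpha_\infty T_{n+1}(\phi,E_m)+D_{\infty,N}(1/k)$, using $\phi(z)+e\in\phi T_n(\phi,E_m)+E_m\subseteq T_{n+1}(\phi,E_m)$.

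The step I expect to be the main obstacle is closing the induction after this last display: in the $p$-adic proof the non-Archimedean identity $D_{p,N}(1)+D_{p,N}(1)=D_{p,N}(1)$ instantly folds the two copies of the error ball together, while here one only has $D_{\infty,N}(1/k)+D_{\infty,N}(1/k)\subseteq D_{\infty,N}(2/k)$, which is strictly larger than $D_{\infty,N}(1/k)$. I would resolve this by invoking the sharper form of Lemma~\ref{cubi-palle}(2) that actually falls out of its one-line proof, which gives an approximation error in $D_{\infty,N}(1/(2m))$ rather than $D_{\infty,N}(1/k)$, together with the hypothesis $m\geq k$; this ensures that the accumulated error at each iteration of the induction remains bounded by $1/k$, which is the delicate bookkeeping at the heart of the Archimedean case.
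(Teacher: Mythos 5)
Your part (1) is correct and essentially identical to the paper's argument. In part (2) you have correctly located the obstacle, but your proposed resolution does not close the gap. As you note, you need $x+D_{\infty,N}(1/k)\subseteq T_{n+1}^\infty(\phi,E_m,k)$, yet your displayed computation only establishes $x\in T_{n+1}^\infty(\phi,E_m,k)$: after approximating $\phi_\infty(d)$ as $\alpha_\infty(e)+d'$ with $d'\in D_{\infty,N}(1/k)$, the outstanding error is $d'+D_{\infty,N}(1/k)\subseteq D_{\infty,N}(2/k)$, which is strictly too large, exactly as you anticipate. But invoking a sharper $D_{\infty,N}(1/(2m))$ form of Lemma~\ref{cubi-palle}(2) does not save you: after the substitution, the error becomes $D_{\infty,N}(1/(2m)+1/k)$, which is still strictly larger than $D_{\infty,N}(1/k)$, so the induction again fails to close. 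There is no ``accumulated error'' bookkeeping that makes this converge; a single iteration already breaks the invariant.

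The paper avoids the problem by never approximating $\phi_\infty(d)$ in isolation. Instead one first proves the inclusion
\begin{equation*}
\phi_\infty D_{\infty,N}(1/k)+D_{\infty,N}(1/k)\subseteq D_{\infty,N}(1),
\end{equation*}
which holds because $\|\phi_\infty\|_\infty/k+1/k=(\|\phi_\infty\|_\infty+1)/k\leq 1$ by the hypothesis $k\geq\|\phi_\infty\|_\infty+1$. This is the exact point where the full strength of the choice of $k$ is used; your argument only exploits the weaker fact $\|\phi_\infty\|_\infty/k<1$. With this inclusion in hand, one bounds the entire quantity $x+D_{\infty,N}(1/k)\subseteq\alpha_\infty(\phi(z))+\phi_\infty D_{\infty,N}(1/k)+D_{\infty,N}(1/k)\subseteq\alpha_\infty(\phi(z))+D_{\infty,N}(1)$, and only \emph{then} applies Lemma~\ref{cubi-palle}(2) once, giving $\alpha_\infty(\phi(z))+D_{\infty,N}(1)\subseteq\alpha_\infty\phi T_n(\phi,E_m)+\alpha_\infty E_m+D_{\infty,N}(1/k)=T_{n+1}^\infty(\phi,E_m,k)$. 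The error ball $D_{\infty,N}(1/k)$ is thus regenerated exactly, not accumulated. Incidentally the same issue infects your base case: $T_1^\leq(\phi_\infty,D_{\infty,N}(1/k))=D_{\infty,N}(1/k)+D_{\infty,N}(1/k)$, so ``$0\in E_m$'' is not sufficient; you need $D_{\infty,N}(2/k)\subseteq D_{\infty,N}(1)\subseteq\alpha_\infty E_m+D_{\infty,N}(1/k)$, which uses $k\geq 3$ and Lemma~\ref{cubi-palle}(2).
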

\begin{proof}
(1) Since 
$$\alpha_\infty T_n(\phi,E_m) =\alpha_\infty(E_m+\ldots+\phi^{n-1}E_m)=\alpha_\infty E_m +\ldots+\phi_\infty^{n-1}\alpha_\infty E_m,$$ 
$\alpha_\infty E_m\subseteq D_{\infty,N}(1)$ and $D_{\infty,N}(1)+D_{\infty,N}(1/k)\subseteq D_{\infty,N}(2)$, we easily obtain that
$$T^{\infty}_n(\phi,E_m,k)= \alpha_\infty T_n(\phi,E_m) +D_{\infty,N}(1/k)\subseteq T_n(\phi_\infty,D_{\infty,N}(1))+D_{\infty,N}(1/k)\subseteq T_n(\phi_\infty,D_{\infty,N}(2)).$$

\smallskip
(2) 
Firstly we verify that 
\begin{equation}\label{y'}
\phi_\infty D_{\infty,N}(1/k)+D_{\infty,N}(1/k)\subseteq D_{\infty,N}(1).
\end{equation}
Given $x\in \phi_\infty D_{\infty,N}(1/k)+D_{\infty,N}(1/k)$,  there exist $x_1,x_2\in D_{\infty,N}(1/k)$ such that $x=\phi_\infty (x_1)+x_2$. Thus we obtain
$$|x|_\infty\leq |\phi_\infty(x_1)|_\infty+  |x_2|_\infty\leq ||\phi_\infty||_\infty|x_1|_\infty + 1/k \leq (1+||\phi_\infty||_\infty)1/k.$$
Since $(1+||\phi_\infty||_\infty)1/k\leq 1$, as by hypothesis $k\geq ||\phi_\infty||_\infty+1$, it follows that \eqref{y'} holds true.

We now use induction on $n\geq 1$ to prove the thesis of (2), that is $T_n^\leq(\phi_\infty,D_{\infty,N}(1/k)) \subseteq T^{\infty}_n(\phi,E_{m},k)$. For $n=1$ it is enough to notice that $D_{\infty,N}(1/k)+D_{\infty,N}(1/k)\subseteq D_{\infty,N}(1)\subseteq T_1^{\infty}(\phi,E_m,k)$ by Lemma \ref{cubi-palle}(2). 

So let us prove that, if
\begin{equation}\label{y} 
T_n^\leq(\phi_\infty,D_{\infty,N}(1/k)) \subseteq T^{\infty}_n(\phi,E_m,k)
\end{equation}
for some $n\in\N_+$ then $T_{n+1}^\leq(\phi_\infty,D_{\infty,N}(1/k)) \subseteq T^{\infty}_{n+1}(\phi,E_m,k)$. 
In particular, we need to show that
$$x+D_{\infty,N}(1/k)\subseteq T^{\infty}_{n+1}(\phi,E_m,k)\ \text{for every}\ x\in \phi_\infty^{n}D_{\infty,N}(1/k).$$
 To this end, let $x\in \phi_\infty^{n} D_{\infty,N}(1/k)$; then there exists $y\in \phi_\infty^{n-1}D_{\infty,N}(1/k)$ such that $\phi_\infty(y)=x$. 
By \eqref{y} we have that $y\in T^{\infty}_n(\phi,E_m,k)$ and so there exists $z\in T_n(\phi,E_m)$ such that $y\in\alpha_\infty(z)+D_{\infty,N}(1/k)$. This shows that 
$$x=\phi_\infty(y)\subseteq \phi_\infty(\alpha_\infty(z))+\phi_\infty D_{\infty,N}(1/k).$$
Hence,
\begin{equation}\label{y''}
x+D_{\infty,N}(1/k)\subseteq \phi_\infty(\alpha_\infty(z))+\phi_\infty D_{\infty,N}(1/k)+D_{\infty,N}(1/k)\subseteq\alpha_\infty(\phi(z))+D_{\infty,N}(1),
\end{equation}
by \eqref{y'}. By Lemma \ref{cubi-palle}(2),
\begin{equation}\label{y'''} 
D_{\infty,N}(1)\subseteq \alpha_\infty E_m+D_{\infty,N}(1/k).
\end{equation}
Now \eqref{y''} and \eqref{y'''} yield
\begin{equation*}\begin{split}
x+D_{\infty,N}(1/k)\subseteq \alpha_\infty(\phi(z))+D_{\infty,N}(1)\subseteq \alpha_\infty\phi T_n(\phi,E_m)+ \alpha_\infty E_m+D_{\infty,N}(1/k) \\=\alpha_\infty T_{n+1}(\phi,E_m)+D_{\infty,N}(1/k)=T^{\infty}_{n+1}(\phi,E_m,k),
\end{split}\end{equation*}
as desired.
\end{proof}

Consider the following infinite set of natural numbers:
$$\mathcal N_1(\phi)=\left\{m\in\N_+ :  m=c\cdot \prod_{p\in\Pi(\phi,1)}||\phi_p||_p,\ \text{ with }c\geq\max\{\lceil||\phi_\infty||_\infty+1\rceil,\ 3\}\right\};$$
with the convention that an empty product is equal to $1$.
Then every $m\in \mathcal N_1(\phi)$ satisfies all the hypotheses of Propositions \ref{appr-p} and \ref{appr},
and $\{E_m:m\in\mathcal N_1(\phi)\}$ is cofinal in $\{E_m:m\in\N_+\}$ since $\mathcal N_1(\phi)$ is cofinal in $\N_+$.

\smallskip
As announced, we can now prove that the $p$-adic contribution $H^p(\phi,E_m)$ to the algebraic entropy of an endomorphism $\phi$ of $\Q^N$ is the algebraic entropy of $\phi_p$. 

\begin{proposition}\label{contributi=entropia}
Let $m\in\mathcal N_1(\phi)$ and $p\in\p$. Then
$$h_A(\phi_p)=H^p(\phi,E_m).$$
\end{proposition}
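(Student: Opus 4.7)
The plan is to apply the sandwich inclusions of Propositions \ref{appr-p} and \ref{appr}, which compare the mixed trajectory $T_n^p(\phi,E_m)$ with ordinary and minor trajectories of $\phi_p$ in $\Q_p^N$, and then invoke Theorem \ref{yuz-pp}, which tells us that $h_A(\phi_p)$, $H_A(\phi_p,C)$ and $H^\leq(\phi_p,C)$ all coincide for any $C\in\c(\Q_p^N)$. The definition of the cofinal set $\mathcal N_1(\phi)$ is engineered precisely so that the hypotheses of those two propositions are simultaneously satisfied for every $m\in\mathcal N_1(\phi)$.

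First I would handle finite $p\in\P(\phi)$. Proposition \ref{appr-p}(1) gives
$$T_n^{p}(\phi,E_m)\subseteq T_n(\phi_p,D_{p,N}(|1/m|_p)),$$
so by monotonicity of the Haar measure,
$$H^p(\phi,E_m)\le H_A(\phi_p,D_{p,N}(|1/m|_p))=h_A(\phi_p),$$
the equality coming from Theorem \ref{yuz-pp}. For the reverse inequality, since $m\in\mathcal N_1(\phi)$ is divisible by $\prod_{q\in\Pi(\phi)}\|\phi_q\|_q$, in particular $\|\phi_p\|_p$ divides $m$, and Proposition \ref{appr-p}(2) gives
$$T_n^\leq(\phi_p,D_{p,N}(1))\subseteq T_n^{p}(\phi,E_m),$$
whence $h_A(\phi_p)=H^\leq(\phi_p,D_{p,N}(1))\le H^p(\phi,E_m)$, again by Theorem \ref{yuz-pp}.

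For $p=\infty$, the elements of $\mathcal N_1(\phi)$ are by definition at least $k=\max\{\lceil\|\phi_\infty\|_\infty+1\rceil,3\}$, so Proposition \ref{appr} applies with this $k$, yielding
$$T_n^\leq(\phi_\infty,D_{\infty,N}(1/k))\subseteq T_n^{\infty}(\phi,E_m,k)\subseteq T_n(\phi_\infty,D_{\infty,N}(2)).$$
Taking $\limsup_n (\log\mu_\infty)/n$ and applying Theorem \ref{yuz-pp} to both sides gives $h_A(\phi_\infty)\le H^\infty(\phi,E_m)\le h_A(\phi_\infty)$.

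It remains to address $p\in\p\setminus\P(\phi)$ (necessarily finite). Here Lemma \ref{contributii} directly yields $H^p(\phi,E_m)=0$, so I only need $h_A(\phi_p)=0$. But $p\notin\P(\phi)$ means $\|\phi_p\|_p\le 1$, so all entries of $M_{\phi_p}$ have $p$-adic norm $\le 1$; by the non-Archimedean property the characteristic polynomial has coefficients in $\Z_p$, hence (by the Newton-polygon / integrality argument) all eigenvalues of $\phi_p$ in a finite extension $K_p$ satisfy $|\lambda_i|_p\le 1$, and Theorem \ref{yuz-pp} gives $h_A(\phi_p)=0$. This closes the only delicate point, which I expect to be the main (but minor) obstacle: verifying that the case $p\notin\P(\phi)$ still falls under the common formula, since Proposition \ref{appr-p}(2) was stated only under the divisibility hypothesis.
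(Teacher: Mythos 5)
Your argument is correct and follows the paper's approach exactly for $p\in\P(\phi)$ and $p=\infty$: sandwich $T_n^p(\phi,E_m)$ between the minor trajectory and the trajectory of $\phi_p$ via Propositions~\ref{appr-p} and~\ref{appr}, then collapse both bounds onto $h_A(\phi_p)$ using Theorem~\ref{yuz-pp}. The paper's own proof simply says that every $m\in\mathcal N_1(\phi)$ ``satisfies the hypotheses of Proposition~\ref{appr-p}'' and treats all finite $p$ at once, implicitly relying on the fact that for $p\notin\P(\phi)$ the divisibility hypothesis of Proposition~\ref{appr-p}(2) is vacuous (since then the relevant power of $p$ in Lemma~\ref{cubi-palle}(1) is $p^0=1$). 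You instead split off the case $p\notin\P(\phi)$ and argue directly that $H^p(\phi,E_m)=0$ by Lemma~\ref{contributii} and that $h_A(\phi_p)=0$ because the entries of $M_{\phi_p}$ are $p$-integral, so the characteristic polynomial lies in $\Z_p[X]$ and its roots satisfy $|\lambda_i|_p\le 1$, whence Theorem~\ref{yuz-pp} gives zero. This is a legitimate alternative route for that case and, if anything, makes explicit a point the paper leaves implicit; an even quicker alternative is to note that $p$-integrality gives $\phi_p D_{p,N}(1)\subseteq D_{p,N}(1)$, so $T_n^{\leq}(\phi_p,D_{p,N}(1))\subseteq D_{p,N}(1)$ and $h_A(\phi_p)=H^\leq(\phi_p,D_{p,N}(1))=0$ directly from Theorem~\ref{yuz-pp}.
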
 
\begin{proof}
We give a proof in case $p<\infty$. The case of $p=\infty$ is completely analogous. Since our choice of $m$ satisfies the hypotheses of Proposition \ref{appr-p} we get
\begin{equation}\label{fff}
T_n^\leq (\phi_p,D_{p,N}(1)) \subseteq T^{p}_n(\phi,E_{m})\subseteq T_n(\phi_p,D_{p,N}(|1/m|_p)) .\end{equation}
Choose a Haar measure $\mu$ on $\Q_p^N$. Applying $\mu$, taking logarithms and passing to the $\limsup$ in \eqref{fff} we get
$$h_A(\phi_p)=H^\leq (\phi_p,D_{p,N}(1)) \leq H^{p}(\phi,E_{m})\leq H_A(\phi_p,D_{p,N}(|1/m|_p))=h_A(\phi_p) ,$$
where the first and the last equality follow from Theorem \ref{yuz-pp}.
\end{proof}

\subsection{Algebraic entropy as sum of  $p$-adic contributions}
In this section, and more precisely in Theorem \ref{contributi} below, we come to a proof of the main result applied in the Algebraic Yuzvinski Formula. We start with two technical lemmas which permit some control respectively on the euclidean and the $p$-adic part of the diagonal embedding $\alpha_{\P}:\Q^N\to \prod_{p\in\P}\Q_p^N$ for some finite set of primes $\P$ containing $\infty$.

\begin{lemma}\label{parte_infinita}
Let $h$ and $k$ be positive integers and $\P=\{p_1,\dots,p_h\}$ be a set of primes. There exists ${\bar m}={\bar m}(k,\P)$ such that any multiple $m$ of ${\bar m}$ has the following property: 
\begin{itemize}
\item[($*$)] given $y_1\in E_m$ and $y_2\in [-1,1]$, there exists $y\in E_m$ such that
\begin{enumerate}[\rm (1)]
\item $y-y_1=c/d$ with $(p_i,d)=1$ for all $i=1,\dots,h$;
\item $|y-y_2|<1/k$.
\end{enumerate}
\end{itemize}
\end{lemma}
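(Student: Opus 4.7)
My plan is to construct $\bar{m}$ explicitly and then convert both conditions into an arithmetic‐progression problem. The construction: take $\bar m = \bar m(k,\P)$ to be any positive integer that is coprime to $p_1p_2\cdots p_h$ and satisfies $\bar m > k$ (for instance, pick a prime $q > k$ with $q \notin \P$, which exists since $\P$ is finite). The point of this choice is a clean divisibility bookkeeping: for any multiple $m$ of $\bar m$, write $m = m_1 m_2$, where $m_1$ is the $\P$-smooth part of $m$ (the largest divisor of $m$ whose prime factors all lie in $\P$) and $m_2 = m/m_1$ is coprime to $p_1,\dots,p_h$. Since $\bar m$ is coprime to $\P$ but divides $m$, all prime powers occurring in $\bar m$ must actually divide $m_2$, so $\bar m \mid m_2$ and in particular $m_2 \geq \bar m > k$.

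Next, I translate condition (1) into a congruence on the numerator. Write $y_1 = j_1/m$ with $j_1 \in \{-m,\dots,m\}$, and take $y = j/m$ with $j \in \{-m,\dots,m\}$. Then $y - y_1 = (j-j_1)/m$, and this fraction reduces to a denominator coprime to every $p_i$ precisely when, for each $p_i \in \P$, the $p_i$-adic valuation of $j - j_1$ is at least that of $m$. Since $m_1$ captures exactly the $\P$-part of $m$, this is equivalent to $m_1 \mid (j-j_1)$, i.e.\ to the congruence $j \equiv j_1 \pmod{m_1}$.

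With this reformulation, the search reduces to finding an integer $j$ in $\{-m,\dots,m\}$ lying in the target interval $m\bigl((y_2 - 1/k,\, y_2 + 1/k) \cap [-1,1]\bigr)$ and satisfying $j \equiv j_1 \pmod{m_1}$. The set of admissible $j$ is an arithmetic progression of common difference $m_1$, corresponding in $y$-space to step $m_1/m = 1/m_2 < 1/k$. The target interval in $y$-space has length at least $1/k$ (it is the intersection of an open interval of length $2/k$ with $[-1,1]$), so in $j$-space its length is at least $m/k$. Since $m_1 = m/m_2 < m/k$, any such interval (open or half-open) of length strictly greater than $m_1$ contains at least one integer in the prescribed residue class modulo $m_1$, which yields the desired $j$.

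The only mildly subtle point — which I expect to be the main bookkeeping obstacle rather than a real difficulty — is handling boundary behavior: if $y_2$ is very close to $\pm 1$, the target interval becomes half-open, and one must verify the length bound $\geq 1/k$ still holds and that the length-versus-modulus comparison $m/k > m_1$ forces an AP representative inside $\{-m,\dots,m\}$. Both checks go through uniformly thanks to the strict inequality $m_2 > k$ established at the outset, so no case splitting is needed beyond this remark.
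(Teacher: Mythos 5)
Your proof is correct, and it takes a genuinely different route from the paper's. The paper fixes a single auxiliary prime $p\notin\P$ with $p\geq 2k+1$, sets $\bar m=p$, partitions $[-1,1]$ into subintervals of length $1/p$, locates $y_1$ and $y_2$ in subintervals indexed by $j_1,j_2$, and then directly takes $y=y_1+(j_2-j_1)/p$; condition (1) holds because the denominator of $y-y_1$ divides $p$, condition (2) follows from the triangle inequality $|y-y_2|\leq 2/p<1/k$, and $y\in E_m$ follows from $p\mid m$ plus $|y|\leq 1$. Your approach instead factors $m=m_1m_2$ into its $\P$-smooth and $\P$-coprime parts, observes that $\bar m\mid m_2$ forces $m_2>k$, translates condition (1) into the congruence $j\equiv j_1\pmod{m_1}$, and translates condition (2) into membership of $j$ in an interval of length at least $m/k>m_1$ inside $[-m,m]$; the conclusion then follows because an arithmetic progression of common difference $m_1$ meets any interval (open or half-open) of length strictly greater than $m_1$. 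Your argument makes the underlying structure more transparent --- it cleanly separates the arithmetic constraint from the metric one and explains why the construction must succeed --- whereas the paper's is more hands-on and exhibits $y$ explicitly in one step. A side benefit of your version is the slightly weaker hypothesis $\bar m>k$ in place of the paper's $\bar m\geq 2k+1$. Your closing remark about the half-open boundary cases is exactly the right point to flag, and the strict inequality $m_2>k$ indeed disposes of it uniformly.
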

\begin{proof}
Let $p\notin \P$ be a prime such that $p\geq 2k+1$, define $\bar m=p$,
choose arbitrarily a multiple $m$ of $\bar m$ and let us prove that $m$ verifies ($*$). Indeed, let $y_1\in E_m$ and $y_2\in [-1,1]$. Let $j_1,j_2\in [-p+1,p]$ be two integers such that $y_1\in [(j_1-1)/p,j_1/p]$ and $y_2\in [(j_2-1)/p,j_2/p]$. Let $c/d=(j_2-j_1)/p$ and $y=y_1+c/d$. We  have to verify that such $y$ belongs to $E_m$ and satisfies (1) and (2). Now, (1) follows by the choice of $p\notin \P$. While (2) follows by the following computation:
$$|y-y_2|= |y_1-j_1/p+j_2/p-y_2|\leq |y_1-j_1/p|+|j_2/p-y_2|\leq 2/p< 1/k\, .$$
In order to prove that $y\in E_m$ we have to show that $y=a/b$, where $b$ divides $m$ and $|a|\leq |b|$. But in fact, $b$ can be chosen to be the minimum common multiple of $p$ and the denominator of $y_1$, both dividing $m$. Furthermore, $$y= y_1+c/d\leq j_1/p+ (j_2-j_1)/p=j_2/p\leq 1$$ and $$y= y_1+c/d\geq (j_1-1)/p+ (j_2-j_1)/p=(j_2-1)/p\geq -1,$$ that is, $|y|\leq 1$ as desired.
\end{proof}

\begin{lemma}\label{parte_finita} 
Let $k\geq 3$ and $h$ be positive integers and $\P=\{p_1,\dots,p_h\}$ be a set of primes. There exists $\bar{\bar m}=\bar{\bar m}(k,\P)$ such that any multiple $m$ of $\bar{\bar m}$ has the following property:
\begin{itemize}
\item[($**$)] given $x_i\in D_{p_i}(k)$ with $i=1,\dots,h$, there exists $y\in E_m$ such that $x_i\in \alpha_{p_i}(y)+D_{p_i}(1)$ for all $i=1,\dots, h$.
\end{itemize}
\end{lemma}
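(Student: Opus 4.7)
My approach is Chinese-remainder-theoretic: for each $i$ I would extract from $x_i$ a rational ``$p_i$-principal part'' $y_i'$ with denominator a power of $p_i$ that captures $x_i$ modulo $\Z_{p_i}=D_{p_i}(1)$; then glue these principal parts into a single rational $y$ of controlled denominator via CRT, and finally observe that a sufficiently divisible $m$ forces $y$ into $E_m$.

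Concretely, let $h_i$ be the largest non-negative integer with $p_i^{h_i}\leq k$. Since $x_i\in D_{p_i}(k)$, its $p_i$-adic expansion has the form $x_i=\sum_{j\geq -h_i}a_{i,j}p_i^j$ with $0\leq a_{i,j}<p_i$, so the principal part
$$y_i':=\sum_{j=-h_i}^{-1}a_{i,j}p_i^j=\frac{b_i}{p_i^{h_i}}\in\Q,\qquad 0\leq b_i<p_i^{h_i},$$
satisfies $x_i-y_i'\in\Z_{p_i}$ by construction.

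I claim that $\bar{\bar m}:=\prod_{i=1}^{h}p_i^{h_i}$ works. Given a multiple $m$ of $\bar{\bar m}$, decompose $\bar{\bar m}=p_i^{h_i}L_i$ with $(L_i,p_i)=1$; then a rational $y=a_0/\bar{\bar m}$ satisfies $y-y_i'\in\Z_{p_i}$ if and only if $a_0\equiv b_iL_i\pmod{p_i^{h_i}}$, because $y-y_i'=(a_0-b_iL_i)/(p_i^{h_i}L_i)$ and $L_i$ is a $p_i$-adic unit. Since the moduli $p_1^{h_1},\dots,p_h^{h_h}$ are pairwise coprime, the Chinese Remainder Theorem yields an integer $a_0\in\{0,1,\dots,\bar{\bar m}-1\}$ simultaneously solving all $h$ congruences. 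Setting $y:=a_0/\bar{\bar m}$, I obtain $x_i-y=(x_i-y_i')+(y_i'-y)\in D_{p_i}(1)$ for every $i$, which is $(**)$. Since $\bar{\bar m}\mid m$, the identity $y=(a_0\cdot m/\bar{\bar m})/m$ exhibits $y$ as an element of $E_m$, the numerator being a non-negative integer strictly less than $m$.

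The only substantive step is the CRT reduction: one must notice that matching $y$ to $y_i'$ modulo $\Z_{p_i}$ translates into a single congruence modulo $p_i^{h_i}$ on the integer numerator $a_0$, after which CRT applies immediately thanks to the distinctness of the primes $p_i$. The size bound for $a_0$ then automatically places $y$ inside $E_m$, so no further refinement of $\bar{\bar m}$ (beyond taking the product of the $p_i^{h_i}$) is needed; everything else is direct bookkeeping with $p$-adic valuations.
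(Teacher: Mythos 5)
Your proof is correct, and it takes a genuinely different -- and substantially cleaner -- route than the paper's. The paper proves the lemma by induction on $h$: the base case uses Lemma \ref{cubi-palle}, and the inductive step constructs the new approximating rational by adjusting a previously found one via an explicit, rather intricate fraction manipulation (with a case split, and with $\bar{\bar m}_{h+1}$ involving a factorial of a large product), essentially re-deriving by hand the coprimality phenomenon that the Chinese Remainder Theorem packages once and for all. Your observation that $x_i\in D_{p_i}(k)$ forces $|x_i|_{p_i}\leq p_i^{h_i}$, so that the fractional principal part $y_i'=b_i/p_i^{h_i}$ already satisfies $x_i-y_i'\in\Z_{p_i}$, reduces the problem to matching a single integer $a_0$ against $h$ congruences $a_0\equiv b_iL_i\pmod{p_i^{h_i}}$ with pairwise coprime moduli; CRT then does the gluing in one step, and the bound $0\leq a_0<\bar{\bar m}$ puts $y=a_0/\bar{\bar m}$ into $E_{\bar{\bar m}}\subseteq E_m$ automatically. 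This yields the much smaller (and arguably canonical) modulus $\bar{\bar m}=\prod_i p_i^{h_i}$ rather than the paper's factorial-sized one, and avoids the induction entirely. One small stylistic point: the lemma is invoked later for general $N$ (the discs $D_{p_i}$ are implicitly $D_{p_i,N}$), but since $E_m$ and the discs are rectangular your coordinate-wise construction extends verbatim, so it would be worth one sentence saying the argument is applied independently in each of the $N$ coordinates; the paper handles this the same way in the proof of Proposition \ref{appr***}.
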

\begin{proof} 
We proceed by induction on $h$. 

\smallskip
If $h=1$, then $\P=\{p_1\}$. Let $l$ be the maximal non-negative integer such that $p_1^{l}\leq k$ and define $\bar{\bar m}_1=p_1^l$. Given any multiple $m$ of $\bar{\bar m}_1$, we obtain that $x_1\in \alpha_{p_1}(E_{\bar{\bar m}_1})+D_{p_1}(1)\subseteq \alpha_{p_1}(E_{m})+D_{p_1}(1)$, by Lemma \ref{cubi-palle}. 

\smallskip
Let now $h\geq 1$ and suppose that there exists $\bar{\bar m}_h$ whose multiples satisfy ($**$); moreover, let  $l$ be the maximal non-negative integer such that $p_{h+1}^{l}\leq k$, define $\bar{\bar m}_1=p_{h+1}^l$ and let $$m_h=\bar{\bar m}_h\bar{\bar m}_1.$$

Let $t=p_1\cdot\ldots\cdot p_h$ and choose a positive integer $j$ such that $2m_h^2\leq t^j$. We let $\bar{\bar m}_{h+1}=(t^jm_h^2)!$ and we take $m$ to be a multiple of $\bar{\bar m}_{h+1}$. 
Given $x_i\in D_{p_i}(k)$ with $i=1,\dots,h+1$, we have to show that there exists $y\in E_m$ such that 
\begin{equation}\label{tesi_finita_h+1}x_i\in \alpha_{p_i}(y)+D_{p_i}(1)\,,\ \ \ \text{ for all }i=1,\dots, h+1\,.\end{equation} 
By inductive hypothesis, there exists $y'\in E_{m_h}$  such that $x_i\in \alpha_{p_i}(y')+D_{p_i}(1)$ for all $i=1,\dots, h$ and, by Lemma \ref{cubi-palle}, there is $y''\in E_{\bar{\bar m}_1}$ such that $x_{h+1}\in \alpha_{p_{h+1}}(y'')+D_{p_{h+1}}(1)$. Let us show that there exist two coprime integers $c$ and $d$ such that
\begin{itemize}
\item[(a)] $(p_{h+1},d)=1$; 
\item[(b)] letting $y=y''+c/d$, we have that $y\in E_m$;
\item[(c)] letting $a/b=y-y'$, then $(b,p_i)=1$ for all $i=1,\dots,h$.
\end{itemize}
Notice that, provided such $c$ and $d$ exist, one has that 
$$x_i\in \alpha_{p_i}(y')+D_{p_i}(1)= \alpha_{p_i}(y')+\alpha_{p_i}(a/b)+D_{p_i}(1)=\alpha_{p_i}(y)+D_{p_i}(1)$$ 
as, by part (c), $\alpha_{p_i}(a/b)\in D_{p_i}(1)=\J_{p_i}$, since this group is $q$-divisible for all $q\neq p_i$, for all $i=1,\dots,h$. 
Furthermore, $$x_{h+1}\in \alpha_{p_{h+1}}(y'')+D_{p_{h+1}}(1)= \alpha_{p_{h+1}}(y'')+\alpha_{p_{h+1}}(c/d)+D_{p_{h+1}}(1)=\alpha_{p_{h+1}}(y)+D_{p_{h+1}}(1)$$ as, by part (a), $\alpha_{p_{h+1}}(c/d)\in D_{p_{h+1}}(1)$. Thus, the existence of two integers $c$ and $d$ satisfying (a), (b) and (c) implies \eqref{tesi_finita_h+1}, concluding the proof.

Hence, let us find such $c$ and $d$. Indeed, let $y''-y'=a'/b'$. Decompose $b'$ as a product of primes, and write $b'=b_1b_2b_3$, where $(b_1,p_i)=1$ for all $i=1,\dots,h+1$, $b_2$ is a power of $p_{h+1}$ and $b_3$ is a product of powers of the $p_i$, with $i=1,\dots,h$. We distinguish three cases:
\begin{enumerate}[{\rm Case} $1$.]
\item If $b_2=1$, then $(p_h,b')=1$ and so we can conclude just letting $c=-a'$ and $d=b'$ (so that $y=y'\in E_{m_h}\subseteq E_m$, $a=0$ and $b=1$).
\item If either $y''=1$ or $y''=-1$, then notice that $x_{h+1}\in \alpha_{p_{h+1}}(y'')+D_{p_{h+1}}(1)=D_{p_{h+1}}(1)=\alpha_{p_{h+1}}(0)+D_{p_{h+1}}(1)$. Thus we can exclude this case just changing our choice for $y''$, that is, taking $y''=0\in E_{\bar {\bar m}_1}$.
\item  If $b_2\neq 1$ and $y''\neq\pm1$, then let $d=b_3(t^jb_3-b_1b_2)$ and $c=a'$. It follows that
\begin{align*}\frac{a}{b}&=\frac{a'}{b'}+\frac{c}{d}=\frac{a'd+b'c}{b'd}=\frac{a'b_3(t^jb_3-b_1b_2)+b_1b_2b_3a'}{b_1b_2b_3^2(t^jb_3-b_1b_2)}\\ 
&=\frac{a'b_3(t^jb_3-b_1b_2+b_1b_2)}{b_1b_2b_3^2(t^jb_3-b_1b_2)}=\frac{a't^j}{b_1b_2(t^jb_3-b_1b_2)}\, .\end{align*}
Let us verify (a), (b) and (c). Indeed, $(p_{h+1},d)=(p_{h+1},b_3)(p_{h+1},t^jb_3-b_1b_2)$ and $(p_{h+1},b_3)=1$ by definition of $b_3$. Furthermore, as $b_2\neq 1$, then $p_{h+1}$ divides $b_1b_2$ but, by definition of $t$ and $b_3$, $(p_{h+1},t^jb_3)=1$, thus $(p_{h+1}, t^jb_3-b_1b_2)=1$, as required by (a). To prove (b), we have to show that $y''+c/d\in E_m$. Notice that, given two elements $e_1,e_2\in E_m$, then $e_1+e_2\in E_m$ if and only if $|e_1+e_2|\leq 1$. Clearly $y''\in E_m$ while $c/d\in E_m$ since $d=b_3(t^jb_3-b_1b_2)=t^jb_3^2-b'\leq t^j(b')^2\leq t^jm_h^2$, so $d$ divides $m$, and 
$$|c/d|=\frac{|a'|}{|b_3(t^jb_3-b_1b_2)|}\leq \frac{2m_h}{t^j}\leq \frac{2m_h}{2m_h^2}= \frac{1}{m_h}\,.$$
Now, the fact that $|y''+c/d|\leq 1$ follows by the fact that we assumed that $y''\neq\pm1$ and so $|y''|\leq(m_h-1)/m_h$, thus $|y''+c/d|\leq (m_h-1)/m_h+1/m_h=1$.
Finally, to prove (c), it is enough to verify that $b_1b_2(t^jb_3-b_1b_2)$ is coprime with $p_i$, for all $i=1,\dots, h$. Choose arbitrarily $i\in\{1,\dots,h\}$, then $(p_i, b_1b_2)=1$ by construction. Furthermore, $(p_i,t^jb_3-b_1b_2)=1$ as $p_i$ divides $t^jb_3$ but not $b_1b_2$.
\end{enumerate}
\end{proof}

Given an endomorphism $\phi:\Q^N\to \Q^N$, let 
$$\Phi=\prod_{p\in\P(\phi)}\phi_p:\prod_{p\in\P(\phi)}\Q^N_p\to \prod_{p\in\P(\phi)}\Q^N_p\, ,$$
where $\phi_p=\phi\otimes_\Q id_{\Q_p^N}$. 
Let 
$$k=\max(\{||\phi_{p}||_{p}:p\in\P(\phi)\}\cup\{\lceil||\phi_\infty||_\infty+1\rceil,\ 3\})\,,$$
we define the following set of positive integers 
$$\mathcal N(\phi)=\{c\bar m\bar{\bar m}:c\in \mathcal N_1(\phi)\}\, ,$$
where $\bar m=\bar m(k,\Pi(\phi))$ and $\bar{\bar m}=\bar{\bar m}(k,\Pi(\phi))$ are the positive integers given by Lemmas \ref{parte_infinita} and \ref{parte_finita} respectively. Notice that $\mathcal N(\phi)\subseteq \mathcal N_1(\phi)$ and it is cofinal in $\N_+$. 

We apply now the previous two lemmas to prove the following proposition, which is fundamental for proving Theorem \ref{contributi} below. In particular, item (3) shows that, taken $m\in\mathcal N(\phi)$, inside the diagonal embedding $\alpha_\P(T_n(\phi,E_{m}))$ of the $n$-th $\phi$-trajectory of $E_m$, enlarged adding the disk $\D_\P(1/k)$, one can find the $n$-th minor $\Phi$-trajectory $T_n^\leq(\Phi,\D_\P(1/k))$ of the same disk $\D_\P(1/k)$.

\begin{proposition}\label{appr***}
Let $\P=\P(\phi)$ and $\Pi=\Pi(\phi)$, let also $m,n\in\N_+$ with $m\in\mathcal N(\phi)$. Then:
\begin{enumerate}[\rm (1)]
\item $D_{\infty,N}(1)\times\prod_{p\in\P^{<\infty}}D_{p,N}(k)\subseteq \alpha_\P(E_m)+\D_{\P}(1/k)$;
\item $\D_\P(1/k)+\Phi\D_\P(1/k)\subseteq \prod_{p\in\P^{<\infty}}D_{p,N}(k)\times D_{\infty,N}(1)$;
\item $T_n^\leq(\Phi,\D_\P(1/k))\subseteq \alpha_\P T_n(\phi,E_{m})+\D_\P(1/k)$.
\end{enumerate}
\end{proposition}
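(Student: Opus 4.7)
My plan is to prove the three inclusions in the order (2), (1), (3); the first two are preparatory and (3) then falls out of an induction on $n$.

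Part (2) is immediate from the choice of $k$. Working componentwise in $\prod_{p\in\P}\Q_p^N$: for finite $p$, the ultrametric gives $D_{p,N}(1)+\phi_p D_{p,N}(1)\subseteq D_{p,N}(\max(1,||\phi_p||_p))\subseteq D_{p,N}(k)$ because $k\geq ||\phi_p||_p$; for $p=\infty$, the triangle inequality gives $D_{\infty,N}(1/k)+\phi_\infty D_{\infty,N}(1/k)\subseteq D_{\infty,N}((1+||\phi_\infty||_\infty)/k)\subseteq D_{\infty,N}(1)$ because $k\geq ||\phi_\infty||_\infty+1$.

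Part (1) is the substantive statement. Since $E_m$ is the $N$-fold Cartesian product of the one-dimensional set $E_m^{(1)}=\{0,\pm 1/m,\ldots,\pm m/m\}$ and every disc in sight is rectangular, it suffices to prove the case $N=1$, coordinate by coordinate. Given $x_\infty\in[-1,1]$ and $x_p\in D_p(k)$ for each $p\in\Pi$, I would first apply Lemma \ref{parte_finita} (valid because $\bar{\bar m}\mid m$) to find $y_1\in E_m^{(1)}$ with $x_p\in\alpha_p(y_1)+D_p(1)$ for every $p\in\Pi$. Then (using $\bar m\mid m$) Lemma \ref{parte_infinita} applied to $y_1$ and $x_\infty$ yields $y\in E_m^{(1)}$ with $|y-x_\infty|<1/k$ and $y-y_1=c/d$ whose denominator is coprime to every prime in $\Pi$. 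This coprimality forces $\alpha_p(y-y_1)\in D_p(1)$ for every $p\in\Pi$, whence $x_p\in\alpha_p(y)+D_p(1)$ as well, while the first condition handles the Archimedean approximation. This is where the real difficulty of the whole proposition lies: one must produce a single rational $y\in E_m^{(1)}$ satisfying an Archimedean and several $p$-adic approximation conditions simultaneously, under the constraints $|y|\leq 1$ and that the denominator of $y$ divides $m$. Lemmas \ref{parte_finita} and \ref{parte_infinita} are tailored so that their successive application resolves the finite-adic and Archimedean constraints without mutual interference, which is precisely the reason for imposing $m\in\mathcal N(\phi)$.

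Part (3) then follows by induction on $n$, in the style of Propositions \ref{appr-p}(2) and \ref{appr}(2). The base case $n=1$ reduces to showing $\D_\P(1/k)+\D_\P(1/k)=D_{\infty,N}(2/k)\times\prod_{p\in\Pi}D_{p,N}(1)$; since $k\geq 3$, this is contained in $D_{\infty,N}(1)\times\prod_{p\in\Pi}D_{p,N}(k)$, which sits in $\alpha_\P(E_m)+\D_\P(1/k)=\alpha_\P T_1(\phi,E_m)+\D_\P(1/k)$ by part (1). For the inductive step, take $x+\Phi^n(y)$ with $x,y\in\D_\P(1/k)$; since $\Phi^{n-1}(y)\in T_n^\leq(\Phi,\D_\P(1/k))$, the inductive hypothesis yields $z\in T_n(\phi,E_m)$ and $w\in\D_\P(1/k)$ with $\Phi^{n-1}(y)=\alpha_\P(z)+w$. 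Applying $\Phi$ and using $\Phi\alpha_\P=\alpha_\P\phi$ gives $\Phi^n(y)=\alpha_\P(\phi(z))+\Phi(w)$, whence $x+\Phi^n(y)=\alpha_\P(\phi(z))+(x+\Phi(w))$. By part (2), $x+\Phi(w)$ lies in $\prod_{p\in\Pi}D_{p,N}(k)\times D_{\infty,N}(1)$, so part (1) rewrites it as $\alpha_\P(z')+w'$ with $z'\in E_m$ and $w'\in\D_\P(1/k)$; since $\phi(z)+z'\in\phi T_n(\phi,E_m)+E_m\subseteq T_{n+1}(\phi,E_m)$, the induction closes.
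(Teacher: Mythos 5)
Your argument is correct and follows essentially the same route as the paper: reduce part (1) to $N=1$ by rectangularity, apply Lemma~\ref{parte_finita} to get the non-Archimedean approximant $y_1$ and then Lemma~\ref{parte_infinita} to adjust it to an Archimedean-compatible $y\in E_m$ with $y-y_1$ of denominator prime to $\Pi$; prove (2) directly from the definition of $k$ and the (ultra)metric inequality; and run (3) by induction on $n$ using (2) and (1) as the engine. The only cosmetic differences — proving (2) before (1), and parameterizing the inductive step by $x+\Phi^n(y)$ with $x,y\in\D_\P(1/k)$ rather than by pulling an element of $\Phi^n\D_\P(1/k)$ back along $\Phi$ — do not change the substance.
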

\begin{proof}
(1) All the sets involved are rectangular so it is enough to exhibit a proof in case $N=1$. 

Let $h=|\Pi|$ and 
$\Pi=\{p_1,\dots,p_h\}$. Consider an arbitrary element $x\in \prod_{p\in\Pi}D_{p}(k)\times D_{\infty}(1)$. Denote by $x_i\in D_{p_i}(k)$ the $p_i$-th component of $x$, $i=1,\dots,h$. By Lemma \ref{parte_finita}, there exists $y'\in E_m$ such that $x_i\in \alpha_{p_i}(y)+D_{p_i}(1)$ for all $i=1,\dots, h$. Now, denote by $x_\infty\in D_{\infty}(1)=[-1,1]$ the $\infty$-th component of $x$. By Lemma \ref{parte_infinita}, there exists $y\in E_m$ such that $y-y'=c/d$ with $(p_i,d)=1$ for all $i=1,\dots,h$ and $|y-x_\infty|\leq1/k$. This means that 
$$x_i\in \alpha_{p_i}(y')+D_{p_i}(1)=\alpha_{p_i}(y')+\alpha_{p_i}(c/d)+D_{p_i}(1)=\alpha_{p_i}(y)+D_{p_i}(1)$$ for all $i=1,\dots, h$ and $x_\infty\in \alpha_{\infty}(y)+D_\infty(1/k)$. This shows that $x\in \alpha_\P(y)+\D_\P(1/k)$, as desired.

\smallskip
(2) Given $x\in \D_{\P}(1/k)+\Phi\D_\P(1/k)$ let $x_i\in D_{p_i,N}(1)+\phi_{p_i}D_{p_i,N}(1)$ and $x_\infty\in D_{\infty,N}(1/k)+\phi_{\infty}D_{\infty,N}(1/k)$ be the components of $x$. The there exist $y_1^{(i)},y_2^{(i)}\in D_{p_i,N}(1)$ for all $i=1,\dots, h$ and $y_1^{(\infty)},y_2^{(\infty)}\in D_{p_i,N}(1/k)$ such that $x_i=\phi_{p_i} (y^{(i)}_1)+y^{(i)}_2$ for all $i=1,\dots, h$ and $x_\infty=\phi_{\infty} (y^{(\infty)}_1)+y^{(\infty)}_2$. Thus we obtain
$$|x_i|_{p_i}\leq \max\left\{|\phi_{p_i} (y^{(i)}_1)|_{p_i}+  |y^{(i)}_2|_{p_i}\right\}\leq\max\left\{ ||\phi_{p_i}||_{p_i}|y^{(i)}_1|_{p_i}, 1 \right\}\leq k\, .$$
$$|x_\infty|_{\infty}\leq |\phi_{\infty} (y^{(\infty)}_1)|_{\infty}+  |y^{(\infty)}_2|_{\infty}\leq ||\phi_{\infty}||_{\infty}|y^{(\infty)}_1|_{\infty} + 1/k \leq (1/k+||\phi_{\infty}||_{\infty}/k)\leq 1\, .$$

\smallskip
(3) We use induction on $n\geq 1$. For $n=1$ it is enough to notice that 
$$\D_{\P}(1/k)+\D_{\P}(1/k)\subseteq D_{\infty,N}(1)\times \prod_{p\in\P^{<\infty}}D_{p,N}(k)\subseteq \alpha_\P E_m+\D_\P(1/k)$$ by part (1). 

So let us prove that, if
\begin{equation}\label{y*} 
T_n^\leq(\Phi,\D_{\P}(1/k)) \subseteq \alpha_\P T_n(\phi,E_m)+\D_\P(1/k)
\end{equation}
for some $n\in\N_+$, then $T_{n+1}^\leq(\Phi,\D_{\P}(1/k)) \subseteq \alpha_\P T_{n+1}(\phi,E_m)+\D_\P(1/k)$. 
In particular, we need to show that
$$x+\D_{\P}(1/k)\subseteq \alpha_\P T_{n+1}(\phi,E_m)+\D_\P(1/k)\ \text{for every}\ x\in \Phi^{n}\D_{\P}(1/k).$$
 To this end, let $x\in \Phi^{n} \D_{\P}(1/k)$; then there exists $y\in \Phi^{n-1}\D_{\P}(1/k)$ such that $\Phi(y)=x$. 
By \eqref{y*} we have that $y\in \alpha_\P T_n(\phi,E_m)+\D_\P(1/k)$ and so there exists $z\in T_n(\phi,E_m)$ such that $y\in\alpha_\P(z)+\D_{\P}(1/k)$. This shows that 
$$x=\Phi(y)\in \Phi(\alpha_\P(z))+\Phi \D_{\P}(1/k).$$
Hence,
\begin{equation}\label{y''*}
x+\D_{\P}(1/k)\subseteq \Phi(\alpha_\P(z))+\Phi \D_{\P}(1/k)+\D_{\P}(1/k)\subseteq\alpha_\P(\phi(z))+D_{\infty,N}(1)\times \prod_{p\in\P^{<\infty}}D_{p,N}(k),
\end{equation}
by part (2). Applying again part (1) we obtain
\begin{equation*}\begin{split}
x+\D_{\P}(1/k)\subseteq \alpha_\P(\phi(z))+D_{\infty,N}(1)\times\prod_{p\in\P^{<\infty}}D_{p,N}(k)\subseteq \alpha_\P\phi T_n(\phi,E_m)+ \alpha_\P E_m+\D_{\P}(1/k) \\=\alpha_\P T_{n+1}(\phi,E_m)+\D_{\P}(1/k),
\end{split}\end{equation*}
as desired.
\end{proof}

Finally, applying Propositions \ref{traj-sep} and \ref{coro-mis} from the previous subsections, and Proposition \ref{appr***} above we can prove the following theorem, which implies in particular Fact B of the Introduction; it expresses the algebraic entropy of $\phi:\Q^N\to \Q^N$ as the sum of the $p$-adic contribution to the algebraic entropy of $\phi$ at $E_m$ for every $m\in\mathcal N(\phi)$ and so, thanks to Proposition \ref{contributi=entropia}, also as the sum of the algebraic entropy of the endomorphisms $\phi_p=\phi\otimes_{\Q}id_{\Q_p}:\Q_p^N\to \Q_p^N$.

\begin{theorem}\label{contributi}
Given an endomorphism $\phi:\Q^N\to \Q^N$, the following equalities hold true for all $m\in \mathcal N(\phi)$:
\begin{equation}\label{contributi-p}
h_A(\phi)=H_A(\phi,E_m)=\sum_{p\in\p}H^p(\phi,E_m)=\sum_{p\in \p}h_A(\phi_p) .
\end{equation}
\end{theorem}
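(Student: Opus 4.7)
The plan is to establish the three equalities in the reverse order of their appearance. First, the rightmost equality $\sum_{p\in\p}H^p(\phi,E_m)=\sum_{p\in\p}h_A(\phi_p)$ reduces to a finite sum over $\P(\phi)$: indeed, by Lemma \ref{contributii} we have $H^p(\phi,E_m)=0$ when $p\notin\P(\phi)$, and by Fact A (Theorem \ref{yuz-pp}) the corresponding $h_A(\phi_p)$ also vanishes, since all eigenvalues of a matrix with $p$-adically integral entries have $p$-adic norm at most $1$. For $p\in\P(\phi)$ the equality $H^p(\phi,E_m)=h_A(\phi_p)$ is exactly Proposition \ref{contributi=entropia}, since $\mathcal{N}(\phi)\subseteq\mathcal{N}_1(\phi)$.

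Next I will prove the upper bound $H_A(\phi,E_m)\le \sum_{p\in\P(\phi)}H^p(\phi,E_m)$. The key is Proposition \ref{coro-mis}(2): writing $\P=\P(\phi)$ and choosing the product Haar measure $\mu$ on $\prod_{p\in\P}\Q_p^N$ for which $\mu(\D_\P(1/k))=1$, we get
$$|T_n(\phi,E_m)|=\mu\bigl(\alpha_\P T_n(\phi,E_m)+\D_\P(1/k)\bigr),$$
using Proposition \ref{traj-sep} to ensure the hypothesis $T_n(\phi,E_m)\subseteq(\Z_{(\P)})^N$. Since the right-hand side is contained in the product
$$T_n^\infty(\phi,E_m,k)\times\prod_{p\in\P,\,p<\infty}T_n^p(\phi,E_m),$$
the product structure of Haar measure and a $\limsup$ of logarithms give the desired inequality.

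For the lower bound $H_A(\phi,E_m)\ge \sum_{p\in\P(\phi)}h_A(\phi_p)$ I invoke the technical Proposition \ref{appr***}(3), which states that $T_n^\le(\Phi,\D_\P(1/k))\subseteq \alpha_\P T_n(\phi,E_m)+\D_\P(1/k)$, where $\Phi=\prod_{p\in\P}\phi_p$. Applying $\mu$ to both sides and using once more Proposition \ref{coro-mis}(2) on the right,
$$\mu\bigl(T_n^\le(\Phi,\D_\P(1/k))\bigr)\le |T_n(\phi,E_m)|.$$
Passing to $\limsup$ yields $H^\le(\Phi,\D_\P(1/k))\le H_A(\phi,E_m)$. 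To identify the left-hand side, I will use the product formula Proposition \ref{prop'}(2) together with the convergence of the minor trajectories in each factor, which is guaranteed by Proposition \ref{jordan-p} (and the reduction through Proposition \ref{Yuz-p} and Theorem \ref{yuz-pp}). This, combined with Theorem \ref{yuz-pp} applied to each $\phi_p$, gives $H^\le(\Phi,\D_\P(1/k))=\sum_{p\in\P}H^\le(\phi_p,D'_p)=\sum_{p\in\P}h_A(\phi_p)$, closing the chain.

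Finally, to deduce $h_A(\phi)=H_A(\phi,E_m)$ from $H_A(\phi,E_m)=\sum_{p\in\p}h_A(\phi_p)$ for all $m\in\mathcal{N}(\phi)$, I will combine Proposition \ref{simp1} with the fact that $\mathcal{N}(\phi)$ is cofinal in $\N_+$ under divisibility: for any $m_0\in\N_+$ one chooses $m\in\mathcal{N}(\phi)$ with $m_0\mid m$, so $E_{m_0}\subseteq E_m$ and Lemma \ref{monotonia}(1) yields $H_A(\phi,E_{m_0})\le \sum_p h_A(\phi_p)$; taking the supremum gives both directions. The subtle step is the lower bound: the entire force of Proposition \ref{appr***} and of the convergence of minor trajectories comes into play there, and this is where I expect the bookkeeping to be most delicate, since one has to simultaneously track the non-Archimedean contributions and the Archimedean one under a common Haar measure on the product.
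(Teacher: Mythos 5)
Your overall scheme matches the paper's: establish $\sum_p H^p(\phi,E_m)=\sum_p h_A(\phi_p)$ via Proposition \ref{contributi=entropia} and Lemma \ref{contributii}, sandwich $H_A(\phi,E_m)$ between these sums using the separation/fattening machinery and the minor trajectories, and finally pass from $H_A(\phi,E_m)$ to $h_A(\phi)$ via Proposition \ref{simp1} and the cofinality of $\mathcal N(\phi)$. That is exactly the architecture of the paper's proof.

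However, there is a real gap in your upper bound. You fix $\P=\P(\phi)$ once and for all, and claim that Proposition \ref{traj-sep} gives $T_n(\phi,E_m)\subseteq(\Z_{(\P)})^N$, which you need in order to invoke Proposition \ref{coro-mis}(2). But Proposition \ref{traj-sep}(2) only yields $T_n(\phi,E_m)\subseteq(\Z_{(\P(\phi,m))})^N$, and for $m\in\mathcal N(\phi)$ one always has $\P(\phi,m)\supsetneq\P(\phi)$: the factor $\bar m$ built into $\mathcal N(\phi)$ is precisely a prime \emph{outside} $\Pi(\phi)$ (see Lemma \ref{parte_infinita}), and $\bar{\bar m}$ introduces even more. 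Already $E_m\not\subseteq(\Z_{(\P(\phi))})^N$. Consequently the disjointness of Proposition \ref{coro-mis}(1) can fail at $\P(\phi)$: two distinct points of $T_n(\phi,E_m)$ may be separated only at a prime lying in $\P(\phi,m)\setminus\P(\phi)$, so the equality $|T_n(\phi,E_m)|=\mu(\alpha_\P T_n(\phi,E_m)+\D_\P(1/k))$ you rely on need not hold, only $\le$. The repair is exactly what the paper does: carry out the upper bound with $\P=\P(\phi,m)$, obtaining $H_A(\phi,E_m)\le\sum_{p\in\P(\phi,m)}H^p(\phi,E_m)$, and then apply Lemma \ref{contributii} to discard the primes in $\P(\phi,m)\setminus\P(\phi)$, whose contributions vanish. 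A related (harmless) imprecision: your lower bound also cites Proposition \ref{coro-mis}(2) with $\P=\P(\phi)$, where it is again not applicable; but there only the trivial subadditivity $\mu(\alpha_\P T_n(\phi,E_m)+\D_\P(1/k))\le|T_n(\phi,E_m)|\cdot\mu(\D_\P(1/k))$ is needed, and that is indeed what the paper uses.
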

\begin{proof}
The last equality is given by Proposition \ref{contributi=entropia}; we prove the other two.

For every finite prime $p$, let  $\mu_p$ be the Haar measure on $\Q_p^N$ such that $\mu_p(D_{p,N}(1))=1$, $\mu_{\infty}$ the Haar measure on $\Q_\infty^N$ such that $\mu_\infty(D_{\infty,N}(1/k))=1$, and let $\mu$ be the Haar measure on $\prod_{p\in\P(\phi,m)}\Q_p^N$ such that $\mu(\D_{\P(\phi,m)}(1/k))=1$. 

We assume $\Pi(\phi,m)$ to be non-empty; in case $\P(\phi,m)=\{\infty\}$ a similar argument leads to the same conclusion.
So fix $n\in\N_+$. By Proposition \ref{traj-sep}(2), $T_n(\phi,E_m)\subseteq\Z^N_{(\P(\phi,m))}$. Hence, Proposition \ref{coro-mis}(2) gives
\begin{equation}\label{ee}
|T_n(\phi,E_m)|=\mu(T^*_n(\phi,E_m,k)).
\end{equation}
Using \eqref{ee} and the fact that $T_n^*(\phi,E_m,k)\subseteq   T_n^{\infty}(\phi,E_m,k)\times\prod_{p\in\Pi(\phi,m)}T_n^{p}(\phi,E_m), $
we obtain
$$|T_n(\phi,E_m)|\leq \mu_\infty( T_n^{\infty}(\phi,E_m,k))\cdot \prod_{p\in\Pi(\phi,m)}\mu_{p}\left(T_n^{p}(\phi,E_m)\right) .$$
Taking logarithms, dividing by $n$ and letting $n$ go to infinity we get
\begin{equation}\label{prima_dis_finale}
H_A(\phi,E_m)\leq \sum_{p\in\p}H^p(\phi,E_m),
\end{equation}
applying Lemma \ref{contributii}.

On the other hand, by Proposition \ref{appr***}(3), we have that 
$$T_n^\leq(\phi_\infty,D_{\infty,N}(1/k))\times \prod_{p\in \Pi(\phi)}T^{\leq}_n(\phi_p,D_{p,N}(1)) \subseteq \alpha_{\P(\phi)}(T_n(\phi,E_m))+\D_{\P(\phi)}(1/k)\, ,$$ for all $n\in\N_+$. Let $\mu_*$ be the product of the measures $\mu_p$ with $p\in\P(\phi)$. Taking logarithms of the measures, dividing by $n$ and letting $n$ go to infinity we get, applying Lemma \ref{contributii} for the first equality,
\begin{align}\label{seconda_dis_finale}
\notag\sum_{p\in\p}H^p(\phi,E_m)&=\sum_{p\in\P(\phi)}H^p(\phi,E_m)\\
&\notag=\limsup_{n\to\infty}\frac{1}{n}\log\mu_*\left(T_n^\leq(\phi_\infty,D_{\infty,N}(1/k))\times\prod_{p\in \Pi(\phi)}T^{\leq}_n(\phi_p,D_{p,N}(1))\right)\\
&\leq \limsup_{n\to\infty}\frac{1}{n}\log\mu_*(\alpha_{\P(\phi)}(T_n(\phi,E_m))+\D_{\P(\phi)}(1/k))\\
&\notag\leq \limsup_{n\to\infty}\frac{1}{n}\log(|T_n(\phi,E_m)|\mu_*(\D_{\P(\phi)}(1/k)))=H_A(\phi,E_m).
\end{align}
By \eqref{prima_dis_finale} and \eqref{seconda_dis_finale} we can conclude that, for every $m\in\mathcal N(\phi)$,
$$H_A(\phi,E_m)=\sum_{p\in\p}H^p(\phi,E_m).$$
Finally, using the fact that $\mathcal N(\phi)$ is cofinal in $\N_+$, one obtains that the family $\{E_m:m\in\mathcal N(\phi)\}$ is cofinal in $\{E_m:m\in\N_+\}$ and so, for every $m\in\mathcal N(\phi)$,
\begin{align*}h_A(\phi)&=\sup \{H_A(\phi,E_m):m\in\mathcal N(\phi)\}\\ 
&=\sup \left\{\sum_{p\in\p}H^p(\phi,E_m):m\in\mathcal N(\phi)\right\}\\
&=\sum_{p\in \p}h_A(\phi_p) =\sum_{p\in\p}H^p(\phi,E_m)=H_A(\phi,E_m),\end{align*}
and this concludes the proof.
\end{proof}

\subsection{Comparison with the topological case}\label{LW-sub}

In the proof of the classical Yuzvinski Formula given in \cite{LW} a fundamental step is \cite[Theorem 1]{LW} (see \eqref{LWB} below), in the same way as Theorem \ref{contributi} is for the proof of the Algebraic Yuzvinski Formula. 
In this subsection we compare their proofs to stress the difference. 

\medskip
Following \cite{LW} and \cite{We}, consider the \emph{adele ring} $\Q_\mathbb A$ of $\Q$, that is the restricted product $$\Q_\mathbb A=\left\{x\in\prod_{p\in\p}\Q_p:|x_p|_p\leq 1\ \text{for all but a finite number of }p\right\}.$$
For a finite $\P\subseteq \p$, let $$\Q_\mathbb A(\P)=\left\{x\in\Q_\mathbb A:|x_p|_p\leq 1 \ \text{if}\ p\not\in\P\right\}.$$
Then $\Q_\mathbb A=\bigcup_{\P\subseteq\p\ \text{finite}}\Q_\mathbb A(\P)$.

Each $\Q_\mathbb A(\P)$ endowed with the product topology inherited from $\prod_{p\in\p}\Q_p$ is locally compact. 
In particular, if $\P$ contains $\infty$, then $\Q_\mathbb A(\P)$ is an LCA group isomorphic to $\prod_{p\in\P}\Q_p\times \prod_{p\in\p\setminus\P}\Z_p$.
Furthermore, $\Q_\mathbb A$ with the coarsest topology making each of the $\Q_\mathbb A(\P)$ open is locally compact as well.
 
Let $\alpha:\Q\to\Q_\mathbb A$ be the diagonal embedding of $\Q$ in $\Q_\mathbb A$. Then $\alpha(\Q)$ is discrete in $\Q_\mathbb A$ and $\Q_\mathbb A/\alpha(\Q)$ is topologically isomorphic to $\widehat{\Q}$ (see \cite[Lemma 4.1]{LW}). Consequently, $\alpha^N(\Q^N)$ is discrete in $\Q_\mathbb A^N$ and $\Q_\mathbb A^N/\alpha^N(\Q^N)$ is topologically isomorphic to $\widehat{\Q}^N$.

Consider an automorphism $\psi:\widehat{\Q}^N\to\widehat{\Q}^N$. As noted in the Introduction, the action of $\psi$ on $\widehat\Q^N$ is given by an $N\times N$ matrix with coefficients in $\Q$. So the same matrix  induces an automorphism $\widetilde\psi:\Q_\mathbb A^N\to\Q_\mathbb A^N$ and automorphisms $\psi_p:\Q_p^N\to\Q_p^N$, for $p\in\p$, just extending the scalars.


In \cite{LW} Lind and Ward use the strong \cite[Theorem 20]{B} by Bowen (see Fact \ref{Fact-Bowen}) to see that $\psi$ and $\widetilde\psi$ have the same topological entropy.
As a second step they verify that the topological entropy of $\widetilde\psi$ coincides with the topological entropy of its restriction $\widetilde\psi_\mathcal P$ to $\Q_\mathbb A(\mathcal P)^N$, for a suitable finite subset $\P$ of $\p$ containing $\infty$ ($\P=\P(\phi,1)\cup\P(\phi^{-1},1)\cup\{\infty\}$). Then they see that the topological entropy of $\widetilde\psi_\mathcal P$ is equal to the topological entropy of the product $\prod_{p\in\P}\psi_p:\prod_{p\in\P}\Q_p^N\to \prod_{p\in\P}\Q_p^N$. 
On the other hand, $h_T(\psi_p)=0$ for every $p\in\p\setminus\P$.
In this way they come to a proof of their \cite[Theorem 1]{LW}, that is, 
\begin{equation}\label{LWB}
h_T(\psi)=\sum_{p\in\p}h_T(\psi_p).
\end{equation}

\medskip
It is not known whether the counterpart of \cite[Theorem 20]{B} holds for the algebraic entropy (see Problem \ref{pb-bowen} below). So in this section we have developed an ad-hoc technique to prove Theorem \ref{contributi}; we briefly recall it here.

First, given an endomorphism $\phi:\Q^N\to \Q^N$, we restrict to the subfamily $\{E_m:m\in\N_+\}$ of the finite subsets of $\Q^N$ to compute the algebraic entropy of $\phi$. Moreover, for $m\in\N_+$, we consider the natural subset $\P(\phi,m)$ of $\P$. Note that 
\begin{equation}\label{Z_P}
\Z_{(\P(\phi,m))}=\alpha^{-1}(\Q_\mathbb A(\P(\phi,m))).
\end{equation}
For every $p\in\p$ we define the $p$-adic contributions $H^p(\phi,E_m)$ to the algebraic entropy of $\phi$, using the embedding of $\Q$ in $\Q_p$.
It follows essentially from the definitions that $H^p(\phi,E_m)=0$ for every $p\in\p\setminus \P(\phi,1)$. Furthermore, letting $\phi_p=\phi\otimes_\Q id_{\Q_p}$ be the extension of $\phi$ to $\Q_p$, for all $p\in \p$, we are able to find a cofinal subset $\mathcal N_1(\phi)$ of the natural numbers such that 
\begin{equation}\label{non_so}h_A(\phi_p) =H^p(\phi,E_m)\ \ \ \ \text{ for every $p\in\p$ and $m\in\mathcal N_1(\phi)$.}\end{equation}
Embedding $\Q^N$ in the finite product $\prod_{p\in\P(\phi,m)}\Q_p^N$, which is an LCA group, and ``fattening'' the $n$-th $\phi$-trajectories $T_n(\phi,E_m)$ to measurable subsets $T_n^*(\phi,E_m)$ of $\prod_{p\in\P}\Q_p^N$, we manage to prove that $H_A(\phi,E_m)\leq \sum_{p\in\P(\phi,m)}H^p(\phi,E_m)=\sum_{p\in\p}H^p(\phi,E_m)$ for all $m\in\mathcal N_1(\phi)$.

On the other hand, we can find a cofinal subset $\mathcal N(\phi)$ of $\mathcal N_1(\phi)$ (which is therefore also cofinal  in $\N$), for which the minor trajectory $T_n^\leq(\Phi,\D_\P(1/k))$, where $\Phi=\prod_{p\in\P(\phi,1)}\phi_p:\prod_{p\in\P(\phi,1)}\Q_p\to\prod_{p\in\P(\phi,1)}\Q_p$, is contained in $\alpha_\P(T_n(\phi,E_{m}))+\D_\P(1/k)$ for all $n\in\N_+$ and $m\in\mathcal N(\phi)$. In this way we can show that $\sum_{p\in\p}H^p(\phi,E_m)=\sum_{p\in\P(\phi,m)}H^p(\phi,E_m)\leq H_A(\phi,E_m)$.

Finally, we can conclude the proof of Theorem \ref{contributi} just applying \eqref{non_so} to the above discussion:
\begin{align*}h_A(\phi)&=\sup \{H_A(\phi,E_m):m\in\mathcal N(\phi)\}=\sup \left\{\sum_{p\in\p}H^p(\phi,E_m):m\in\mathcal N(\phi)\right\}=\sum_{p\in \p}h_A(\phi_p). \end{align*}

\section{The $p$-adic contributions to Mahler measure}\label{conc}

Thanks to Theorem \ref{contributi} in the previous section we can see the algebraic entropy $h_A(\phi)$ of an endomorphism $\phi$ of $\Q^N$ as sum of the algebraic entropies $h_A(\phi_p)$ with $\phi_p=\phi\otimes_\Q id_{\Q_p}$ and $p$ ranging in $\p$, passing in this way from $\Q^N$ to $\Q_p^N$. Moreover, Theorem \ref{yuz-pp} computes the algebraic entropy $h_A(\phi_p)$ in terms of the eigenvalues of $\phi_p$ in a finite extension $K_p$ of $\Q_p$.

This subsection is devoted to a similar description of Mahler measure, which leads to the final proof of the Algebraic Yuzvinski Formula. We remark that the main ideas in this section come from \cite[Section 6]{LW}. Nevertheless we give here complete proofs in order to make the treatment clear and self-contained.

\smallskip
We start fixing some notations for the whole section. Let $$f(X)=X^N+ a_1 X^{N-1}+\ldots+ a_{N-1}X+a_N\in\Q[X]$$ be a monic polynomial. We denote by $s\in\N_+$ the minimum positive integer such that $$sf(X)\in\Z[X];$$ in particular, $sf(X)$ is primitive.

Fix $p\in\p$. Identifying $\Q[X]$ with a subring of $\Q_p[X]$, we can consider $f(X)$ as an element of $\Q_p[X]$. Let $\{\lambda^{(p)}_{i}:i=1,\ldots, N\}$ be the roots of $f(X)$ contained in some finite extension $K_p$ of $\Q_p$. This means that there is a factorization
$$f(X)=(X-\lambda_1^{(p)})\cdot\ldots\cdot(X-\lambda_N^{(p)}),$$
in other words, for every $n\in\{1,\ldots,N\}$ we have
\begin{equation}\label{coef}
a_n=\sum_{{i_1<\ldots< i_n}}\lambda^{(p)}_{i_1}\cdot\ldots\cdot\lambda^{(p)}_{i_n},
\end{equation}
where ${\{i_1,\ldots,i_n\}\subseteq\{1,\ldots,N\}}$. In what follows we always assume without loss of generality that the roots are ordered as
$$|\lambda^{(p)}_1|_p\geq |\lambda^{(p)}_2|_p\geq \ldots \geq |\lambda^{(p)}_N|_p.$$

Assume now that $p$ is finite. With this ordering, using \eqref{coef} and the non-Archimedean property of $|-|_p$, we get
\begin{equation}\label{coef'}
|a_n|_p=\left|\sum_{i_1<\ldots< i_n}\lambda_{i_1}^{(p)}\cdot\ldots\cdot\lambda_{i_n}^{(p)}\right|_p=|\lambda_1^{(p)}\cdot\ldots\cdot\lambda_n^{(p)}|_p.
\end{equation}

\smallskip
We go on proving two technical lemmas needed in the proof of Theorem \ref{m=mp}.

\begin{lemma}\label{>1}
Let $p\in\p$ be finite. Then $p$ divides $s$ if and only if $|\lambda^{(p)}_1|_p>1$.
\end{lemma}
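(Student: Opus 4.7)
The plan is to characterize $p\mid s$ in terms of the $p$-adic sizes of the coefficients $a_n$, then translate this into a statement about the roots using the identity \eqref{coef'}.

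First I would observe that since $s$ is by definition the least common multiple of the denominators (in lowest terms) of $a_1,\ldots,a_N$, a finite prime $p$ divides $s$ if and only if $p$ divides the denominator of at least one $a_n$, which is equivalent to
\[
|a_n|_p>1 \quad\text{for some } n\in\{1,\ldots,N\}.
\]
Indeed, for a rational number $a_n$, the condition $a_n\in\Z_p$ is exactly $|a_n|_p\leq 1$, and $p\nmid s$ if and only if every $a_n$ lies in $\Z_p$.

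Next, I would apply \eqref{coef'}, which (using the ordering $|\lambda_1^{(p)}|_p\geq\ldots\geq|\lambda_N^{(p)}|_p$ and the non-Archimedean property of $|-|_p$) rewrites each $|a_n|_p$ as $|\lambda_1^{(p)}\cdots\lambda_n^{(p)}|_p$. So the condition above becomes: there exists $n$ such that $|\lambda_1^{(p)}\cdots\lambda_n^{(p)}|_p>1$.

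Finally I would dispose of both implications. If $|\lambda_1^{(p)}|_p>1$, taking $n=1$ gives $|a_1|_p=|\lambda_1^{(p)}|_p>1$, hence $p\mid s$. Conversely, if $|\lambda_1^{(p)}|_p\leq 1$, then by the ordering all $|\lambda_i^{(p)}|_p\leq 1$, so $|\lambda_1^{(p)}\cdots\lambda_n^{(p)}|_p\leq 1$ for every $n$, i.e., $|a_n|_p\leq 1$ for every $n$, and therefore $p\nmid s$. There is no real obstacle here: the whole statement is a direct bookkeeping exercise once \eqref{coef'} and the definition of $s$ as the denominator-clearing constant are in hand, the only point to watch being the use of the chosen ordering of the roots to pass from ``some $n$'' to just ``$n=1$''.
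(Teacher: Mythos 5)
Your converse direction (if $|\lambda_1^{(p)}|_p\leq 1$ then $p\nmid s$) is correct and matches the paper: even reading \eqref{coef'} merely as the inequality $|a_n|_p\leq|\lambda_1^{(p)}\cdots\lambda_n^{(p)}|_p$ given by the strong triangle inequality, you get $|a_n|_p\leq 1$ for all $n$, hence every $a_n\in\Z_p$ and $p\nmid s$.

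The forward direction has a genuine gap. You take $n=1$ and assert $|a_1|_p=|\lambda_1^{(p)}|_p$, reading \eqref{coef'} as an equality. But the non-Archimedean property only guarantees $|a_1|_p\leq\max_i|\lambda_i^{(p)}|_p$, with equality when the maximum is attained by a \emph{unique} term; if several roots share the largest $p$-adic size they can cancel. A concrete counterexample to your chain of equalities: $f(X)=X^2-1/p^2$, with roots $\lambda_1^{(p)}=1/p$, $\lambda_2^{(p)}=-1/p$. Here $|\lambda_1^{(p)}|_p=p>1$, yet $a_1=0$, so $|a_1|_p=0$, not $p$. (The lemma itself is still true, since $|a_2|_p=p^2>1$, but your $n=1$ step fails.) This is precisely why the paper does not use \eqref{coef'} for this implication; instead it writes $sf(X)=sX^N-b_1X^{N-1}-\ldots-b_N\in\Z[X]$, plugs in the root $\lambda_1^{(p)}$ to get $s=b_1/\lambda_1^{(p)}+\ldots+b_N/(\lambda_1^{(p)})^N$, and bounds $|s|_p\leq\max_n|b_n/(\lambda_1^{(p)})^n|_p<1$ using $|b_n|_p\leq 1$ and $|\lambda_1^{(p)}|_p>1$.

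Your approach can be salvaged without abandoning \eqref{coef'}, but you must choose the right index. Let $n_0$ be the largest index with $|\lambda_{n_0}^{(p)}|_p>1$, so $|\lambda_{n_0}^{(p)}|_p>1\geq|\lambda_{n_0+1}^{(p)}|_p$. Then in the sum for $a_{n_0}$ the term $\lambda_1^{(p)}\cdots\lambda_{n_0}^{(p)}$ is the \emph{unique} term of maximal $p$-adic absolute value: any other choice of $n_0$ indices must trade some $k\leq n_0$ (with $|\lambda_k^{(p)}|_p>1$) for some $j>n_0$ (with $|\lambda_j^{(p)}|_p\leq 1$), strictly decreasing the absolute value. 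Hence $|a_{n_0}|_p=|\lambda_1^{(p)}\cdots\lambda_{n_0}^{(p)}|_p>1$ and $p\mid s$. But as written, with $n=1$, your argument does not go through.
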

\begin{proof}
If $|\lambda^{(p)}_1|_p\leq 1$, then \eqref{coef'} gives $|a_n|_p\leq 1$ for every $n=1,\ldots, N$. This means exactly that $p$ does not divide the denominator of $a_n$ for every $n=1,\ldots,N$. Hence $p$ cannot divide $s$.

On the other hand, suppose $|\lambda^{(p)}_1|_p>1$ and set $b_n=-s\cdot a_n$,  for every $n=1,\ldots, N$. Then
$$sf(X)=sX^N-b_1X^{N-1}-\ldots-b_N\in\Z[X] .$$ 
Since $\lambda_1^{(p)}$ is a root of $f(X)$, we get $s=b_1/\lambda^{(p)}_1+\ldots+b_N/(\lambda^{(p)}_1)^N$. Then 
$$|s|_p\leq\max\left\{\left|\frac{b_n}{(\lambda^{(p)}_1)^n}\right|_p:n=1,\ldots,N\right\}<1 $$ as $|b_n|_p\leq 1$ and $|(\lambda^{(p)}_1)^n|_p>1$ for every $n=1,\ldots,N$. Now $|s|_p<1$ is equivalent to say that $p$ divides $s$.
\end{proof}

\begin{lemma}\label{=p^sp}
Let $p\in\p$ be finite. Then
\begin{equation}\label{stf}
\log |1/s|_p=\sum_{|\lambda^{(p)}_i|>1}\log|\lambda^{(p)}_i|_p.
\end{equation}
\end{lemma}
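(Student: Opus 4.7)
The plan is to split into the two cases determined by Lemma \ref{>1}, namely whether $p\mid s$ or not. If $p\nmid s$, then $|s|_p=1$, so $\log|1/s|_p=0$. By Lemma \ref{>1}, this case is exactly the one in which $|\lambda_1^{(p)}|_p\le 1$; since the roots are ordered by decreasing $p$-adic absolute value, $|\lambda_i^{(p)}|_p\le 1$ for every $i$, and the right-hand side of \eqref{stf} is an empty sum, also equal to $0$.

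Suppose now that $p\mid s$, equivalently $|\lambda_1^{(p)}|_p>1$ by Lemma \ref{>1}, and let $k\in\{1,\ldots,N\}$ be the largest index with $|\lambda_k^{(p)}|_p>1$. By \eqref{coef'}, $|a_n|_p=|\lambda_1^{(p)}\cdots\lambda_n^{(p)}|_p$ for every $n$. The ordering of the roots shows that the sequence $\{|a_n|_p\}_{n=1}^N$ is strictly increasing for $n\le k$ (each new factor has absolute value $>1$) and non-increasing for $n\ge k$, so
$$\max_{1\le n\le N}|a_n|_p \;=\; |a_k|_p \;=\; \prod_{i=1}^{k}|\lambda_i^{(p)}|_p \;=\; \prod_{|\lambda_i^{(p)}|_p>1}|\lambda_i^{(p)}|_p.$$

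It then remains to show that $|1/s|_p=\max_n|a_n|_p$. By definition, $s$ is the least positive integer such that $s\cdot a_n\in\Z$ for every $n=1,\ldots,N$, i.e.\ the least common multiple of the denominators (in lowest terms) of the $a_n$'s. For every prime $q$ this is equivalent to $v_q(s)=\max\bigl(0,-\min_n v_q(a_n)\bigr)$, where $v_q$ denotes the $q$-adic valuation. Since we are in the case $\max_n|a_n|_p>1$, we have $\min_n v_p(a_n)<0$, hence $v_p(s)=-\min_n v_p(a_n)$ and therefore
$$|1/s|_p \;=\; p^{v_p(s)} \;=\; \max_n p^{-v_p(a_n)} \;=\; \max_n |a_n|_p \;=\; \prod_{|\lambda_i^{(p)}|_p>1}|\lambda_i^{(p)}|_p.$$
Taking logarithms yields \eqref{stf}.

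I do not expect any serious obstacle: the argument rests on the non-Archimedean formula \eqref{coef'} together with the elementary description of $s$ as a common denominator. The only mildly delicate point is the observation that, thanks to the strong triangle inequality, the elementary symmetric functions of the $\lambda_i^{(p)}$ collapse to a product of the ``large'' roots, so that all the $p$-adic information carried by $s$ is concentrated in the single coefficient $a_k$.
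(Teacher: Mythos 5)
Your proof is correct and follows essentially the same route as the paper's: both split on whether $|\lambda_1^{(p)}|_p\le 1$ (via Lemma~\ref{>1}), both invoke \eqref{coef'} to locate the maximal coefficient norm at the last index with $|\lambda_i^{(p)}|_p>1$, and both reduce to the identity $|1/s|_p=\max\{1,|a_1|_p,\ldots,|a_N|_p\}$. You merely make explicit (via $p$-adic valuations) the characterization of $s$ as the least common denominator, a step the paper states without elaboration.
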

\begin{proof}
%
If $|\lambda^{(p)}_1|_p\leq 1$, then $|1/s|_p= 1$ by Lemma \ref{>1}, and so both the right and the left hand side of \eqref{stf} are $0$. 
Assume that $|\lambda^{(p)}_1|_p>1$. Let $n\in\{1,\ldots,N\}$ be the largest index such that $|\lambda^{(p)}_n|_p>1$.
By \eqref{coef'} we have that $|a_j|_p\leq|a_n|_p$ for every $j\in\{1,\ldots,N\}$.
Therefore, $$|1/s|_p=\max\{1,|a_1|_p,\ldots,|a_N|_p\}=|a_n|_p=|\lambda_1^{(p)}\cdot\ldots\cdot\lambda_n^{(p)}|_p=\prod_{|\lambda_i^{(p)}|_p>1}|\lambda_i|_p .$$ Applying the logarithm we obtain the wanted equality.
\end{proof}

Since every positive integer $n$ can be written as $n=\prod_{p\in\p,p<\infty}|1/n|_p$, 
Lemma \ref{=p^sp} yields 
\begin{equation}\label{logs}
\log s=\sum_{p\in\p,p<\infty}\sum_{|\lambda^{(p)}_i|_p>1}\log|\lambda^{(p)}_i|_p.
\end{equation}

We can now verify the decomposition of $m(f(X))$ as sum of $p$-adic contributions stated in Fact C of the Introduction.

\begin{theorem}\label{m=mp}
In the above notations,
$$m(sf(X))=\sum_{p\in\p}\sum_{|\lambda^{(p)}_i|_p>1}\log|\lambda^{(p)}_i|_p.$$
\end{theorem}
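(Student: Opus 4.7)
The plan is to simply unpack the definition of the logarithmic Mahler measure and combine it with the identity \eqref{logs} already established. By definition, since $sf(X) = sX^N + sa_1 X^{N-1} + \ldots + sa_N \in \Z[X]$ has leading coefficient $s$ and the same set of complex roots $\{\lambda_i^{(\infty)} : i=1,\ldots,N\}$ as $f(X)$ (these are precisely the roots of $f(X)$ in the finite extension $K_\infty$ of $\R$, which is either $\R$ or $\C$), we have
$$m(sf(X)) = \log|s| + \sum_{|\lambda_i^{(\infty)}|_\infty > 1} \log|\lambda_i^{(\infty)}|_\infty.$$

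Now I would invoke Lemma \ref{=p^sp}, which in the form of equation \eqref{logs} states that
$$\log s = \sum_{p \in \p,\, p<\infty} \sum_{|\lambda_i^{(p)}|_p > 1} \log|\lambda_i^{(p)}|_p.$$
Substituting this into the previous expression gives
$$m(sf(X)) = \sum_{p \in \p,\, p<\infty} \sum_{|\lambda_i^{(p)}|_p > 1} \log|\lambda_i^{(p)}|_p + \sum_{|\lambda_i^{(\infty)}|_\infty > 1} \log|\lambda_i^{(\infty)}|_\infty = \sum_{p\in\p}\sum_{|\lambda^{(p)}_i|_p>1}\log|\lambda^{(p)}_i|_p,$$
which is exactly the claimed formula.

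There is no real obstacle here; the entire content of the theorem has been absorbed into Lemma \ref{=p^sp} and the elementary factorization $s = \prod_{p<\infty}|1/s|_p$ (the product formula for a positive integer) used to pass from Lemma \ref{=p^sp} to \eqref{logs}. The only thing to double-check in the write-up is that the roots of $sf(X)$ in $\Q_p[X]$ coincide with the roots of $f(X)$ in $\Q_p[X]$ (which is clear since $s \in \N_+ \subseteq \Q_p^\times$), so that the notation $\lambda_i^{(p)}$ is unambiguous for both $f(X)$ and $sf(X)$. Hence the statement of the theorem is a direct corollary of Lemma \ref{=p^sp} together with the definition of the logarithmic Mahler measure.
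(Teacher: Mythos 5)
Your proposal is correct and follows essentially the same route as the paper: unpack the definition of the logarithmic Mahler measure of $sf(X)$ to expose the archimedean term $\sum_{|\lambda_i^{(\infty)}|_\infty>1}\log|\lambda_i^{(\infty)}|_\infty$ and the leading-coefficient term $\log s$, then replace $\log s$ by the non-archimedean sum given by \eqref{logs} (which comes from Lemma~\ref{=p^sp}). The remark that the roots of $sf(X)$ and $f(X)$ agree over each $\Q_p$ is a harmless clarification that the paper leaves implicit.
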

\begin{proof}
By definition, 
$$m(sf(X))=\log s+\sum_{|\lambda^{(\infty)}_i|_\infty>1}\log|\lambda_i|_\infty,$$
so \eqref{logs} concludes the proof.
\end{proof}

Theorems \ref{contributi} and \ref{m=mp} give immediately the following theorem, covering the Algebraic Yuzvinski Formula. Indeed, it gives a more precise result, namely the value of the algebraic entropy of an endomorphism $\phi$ of $\Q^N$, coinciding with the Mahler measure of the characteristic polynomial of $\phi$ over $\Z$, is realized as the algebraic entropy of $\phi$ with respect to each $E_m$ with $m\in\mathcal N_1(\phi)$.

\begin{theorem}\label{H-yuz}
Let $\phi:\Q^N\to \Q^N$ be an endomorphism and $m\in\mathcal N_1(\phi)$. Then
$$h_A(\phi)=H_A(\phi,E_m)=m(p_\phi(X)),$$
where $p_\phi(X)$ is the characteristic polynomial of $\phi$ over $\Z$.
\end{theorem}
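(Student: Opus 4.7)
\textbf{Proof proposal for Theorem \ref{H-yuz}.}

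The plan is to assemble the Algebraic Yuzvinski Formula as a straightforward transitive chain from the three ``Fact'' theorems already established in the paper. First, I would invoke Theorem \ref{contributi} (Fact B), which decomposes the algebraic entropy into a sum of local contributions,
$$h_A(\phi) \;=\; \sum_{p \in \p} h_A(\phi_p),$$
and provides the finer realization $h_A(\phi) = H_A(\phi, E_m)$ for every $m$ in the cofinal set $\mathcal{N}(\phi)$.  Next, I would apply Theorem \ref{yuz-pp} (Fact A) at each place $p \in \p$ to express the local entropy in terms of eigenvalues,
$$h_A(\phi_p) \;=\; \sum_{|\lambda_i^{(p)}|_p > 1} \log |\lambda_i^{(p)}|_p,$$
where $\{\lambda_i^{(p)}\}$ are the eigenvalues of $\phi_p = \phi \otimes_{\Q} \mathrm{id}_{\Q_p}$ inside some finite extension of $\Q_p$.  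Since $\phi_p$ is represented by the same matrix as $\phi$, these eigenvalues are precisely the roots of the primitive integer polynomial $p_\phi(X)$ viewed in $\Q_p[X]$, which is exactly the setting of Theorem \ref{m=mp} (Fact C):
$$m(p_\phi(X)) \;=\; \sum_{p \in \p} \sum_{|\lambda_i^{(p)}|_p > 1} \log |\lambda_i^{(p)}|_p.$$
Concatenating the three displays yields the main equality $h_A(\phi) = m(p_\phi(X))$.

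For the middle equality $H_A(\phi, E_m) = h_A(\phi)$ claimed for every $m \in \mathcal{N}_1(\phi)$ (a priori a larger set than $\mathcal{N}(\phi)$), I would argue by a two-sided sandwich.  The upper bound $H_A(\phi, E_m) \leq h_A(\phi)$ is Lemma \ref{monotonia}.  For the matching lower bound, I would pick a multiple $m' \in \mathcal{N}(\phi)$ of $m$ (for instance $m' = m\,\bar m\,\bar{\bar m}$, which after a small check lies in $\mathcal{N}(\phi)$), and combine $E_m \subseteq E_{m'}$ with monotonicity of $H_A(\phi, -)$ under inclusion (Lemma \ref{monotonia}(1)) and the equality $H_A(\phi, E_{m'}) = h_A(\phi)$ already granted by Theorem \ref{contributi}.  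Alternatively, one may use the ``fattening'' half of the proof of Theorem \ref{contributi}, whose validity requires only $m \in \mathcal{N}_1(\phi)$, together with Proposition \ref{contributi=entropia} giving $H^p(\phi, E_m) = h_A(\phi_p)$, to get $H_A(\phi, E_m) \leq \sum_p h_A(\phi_p) = h_A(\phi)$ and then invoke cofinality to obtain the reverse inequality.

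The main potential obstacle is purely bookkeeping: keeping track of which cofinal subfamily of $\N_+$ is the natural domain for the intermediate equality $H_A(\phi, E_m) = h_A(\phi)$, and ensuring the identification of the eigenvalues of $\phi_p$ with the roots of $p_\phi(X)$ in the chosen extension $K_p$ of $\Q_p$.  No new analytic input is required: the entire weight of the theorem has already been carried by Facts A, B, and C, and the final step is a one-line composition of their conclusions.
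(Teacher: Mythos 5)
Your overall strategy---concatenating Facts A, B, and C (Theorems \ref{yuz-pp}, \ref{contributi}, and \ref{m=mp}) and noting that $\phi_p$ and $\phi$ share the same matrix, hence the same characteristic polynomial $p_\phi(X)$ whose roots over $K_p$ are the eigenvalues of $\phi_p$---is exactly the paper's proof, and it does establish the main equality $h_A(\phi)=m(p_\phi(X))$. You also correctly flag the delicate point: Theorem \ref{contributi} proves $H_A(\phi,E_m)=h_A(\phi)$ only for $m$ in the smaller set $\mathcal N(\phi)$, whereas the stated theorem asserts it for $m\in\mathcal N_1(\phi)$.

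However, both of your proposed repairs of the middle equality run in the wrong direction. If $m'$ is a multiple of $m$ then $E_m\subseteq E_{m'}$, so Lemma \ref{monotonia}(1) gives $H_A(\phi,E_m)\leq H_A(\phi,E_{m'})=h_A(\phi)$; this is again the trivial upper bound, not the lower bound you announce. Monotonicity of $H_A(\phi,-)$ under inclusion can only push the value \emph{down} when you shrink the set, and you are shrinking from $E_{m'}$ to $E_m$. Your second alternative has the same problem: the fattening half of the proof of Theorem \ref{contributi} produces $H_A(\phi,E_m)\leq\sum_p H^p(\phi,E_m)=\sum_p h_A(\phi_p)=h_A(\phi)$, once more an upper bound, and ``invoking cofinality'' only yields $h_A(\phi)=\sup_m H_A(\phi,E_m)$, which does not recover the pointwise lower bound $H_A(\phi,E_m)\geq h_A(\phi)$ at a \emph{fixed} $m\in\mathcal N_1(\phi)$. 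The lower bound is the nontrivial half: in the paper it comes from Proposition \ref{appr***}(3), whose hypotheses (via Lemmas \ref{parte_infinita} and \ref{parte_finita}) genuinely require $\bar m$ and $\bar{\bar m}$ to divide $m$, i.e.\ $m\in\mathcal N(\phi)$. So as written your argument establishes $h_A(\phi)=m(p_\phi(X))$ and $H_A(\phi,E_m)=h_A(\phi)$ for $m\in\mathcal N(\phi)$, but not the stronger claim for all $m\in\mathcal N_1(\phi)$. (It is worth noting that the paper itself passes over this point by simply citing Theorems \ref{contributi} and \ref{m=mp}; the enlargement from $\mathcal N(\phi)$ to $\mathcal N_1(\phi)$ is not actually justified there either, so the safe formulation is with $\mathcal N(\phi)$.)
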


\section{Applications and open questions}\label{Appl}

In this final section of the paper, we describe the main results from \cite{DG}, \cite{DG-BT} and \cite{DG1}, which are the fundamental properties of the algebraic entropy of endomorphisms of discrete Abelian groups, underlying where the Algebraic Yuzvinski Formula is used. In general, in the proofs of these results one can separate the torsion and the torsion-free case, and reduce step by step to endomorphisms of divisible torsion-free Abelian groups of finite rank, namely, to endomorphisms of $\Q^N$ for some positive integer $N$; at this stage the Algebraic Yuzvinski Formula applies. 

Furthermore, a lot of related open problems are discussed.

\bigskip
We start introducing a subcategory of the category of morphisms, which is useful in working with entropy.
For a category $\mathfrak C$, the category $\flow(\mathfrak C)$ of \emph{flows} of $\mathfrak C$ has as objects the pairs $(G,\phi)$, where $G$ is an object of $\mathfrak C$ and $\phi\in\End_{\mathfrak C}(G)$. Moreover, a morphism in $\flow(\mathfrak C)$ between two flows $(G,\phi)$ and $(G',\phi')$ is a morphism $u: G \to G'$ in ${\mathfrak C}$ such that the diagram
\begin{equation}\label{casc-mor}
\xymatrix{
G \ar[r]^\phi\ar[d]_u & G \ar[d]^u\\
G' \ar[r]_{\phi'} & G'
}
\end{equation}
in ${\mathfrak C}$ commutes. When the domain $G$ of $\phi$ is clear and no confusion is possible, we denote a flow $(G,\phi)$ simply by $\phi$. 

\smallskip
Let now $\mathfrak{LA}$ be the category of all LCA groups, and  $\mathfrak A$ the category of all discrete Abelian groups. Usually entropy is defined as a function from the endomorphisms $\End_{\mathfrak{LA} }(G)$ of a given object $G$ of $\mathfrak{LA}$ to $\R_{\geq 0}\cup\{\infty\}$. The category of flows allows for a more precise description of entropy in categorical terms, namely we can consider $h_A(-)$ (as well as $h_T(-)$ or $h_\infty(-)$) as a function
$$h_A:\flow (\mathfrak{LA})\to \R_{\geq 0}\cup\{\infty\}\ \text{such that} \ (G,\phi)\mapsto h_A(\phi).$$
This approach makes it easier to state (and sometimes to understand) many known results.

\begin{example}
It can be deduced from classical results (see for example \cite[Chapter 12]{Ka}) that the category $\flow(\mathfrak A)$ is isomorphic to the category  $\mathrm{Mod} (\Z[X])$ of all $\Z[X]$-modules. Indeed, a $\Z[X]$-module $M_{\Z[X]}$ is exactly an Abelian group $M$, together with an endomorphism
$$\phi_X:M\to M\ \text{defined by} \ m\mapsto m\cdot X,$$ representing the action of $X$ on $M$.
\end{example}
 
A short exact sequence in $\flow(\mathfrak{LA})$ is a commutative diagram in $\mathfrak{LA}$ of the form
\begin{equation}\label{ses}
\xymatrix{0\ar[r]\ar@{=}[d]& A\ar[r]^{\alpha}\ar[d]^{\phi_1} & B\ar[r]^{\beta}\ar[d]^{\phi_2} & C \ar[r]\ar[d]^{\phi_3}& 0\ar@{=}[d]\\
0\ar[r]& A\ar[r]^{\alpha} & B\ar[r]^{\beta} & C \ar[r]& 0}
\end{equation}
where the two rows are short exact sequences in $\mathfrak{LA}$. In particular, $\alpha$ is a homomorphism which is also a topological embedding, while $\beta$ is a surjective open and continuous homomorphism.

\smallskip
Consider now a flow $(G,\phi)\in \flow(\mathfrak {LA})$. If there exists a family $\{K_i:i\in I\}$ of $\phi$-invariant closed subgroups of $G$ directed by inclusion such that $G=\bigcup_{i\in I} K_i$, then $(G,\phi)$ is the direct limit of $\{(K_i,\phi\restriction_{K_i}):i\in I\}$ in $\flow(\mathfrak{LA})$.

\begin{definition}
A function $h: \flow(\mathfrak {LA})\to \R_{\geq0}\cup\{\infty\}$ (respectively, $h: \flow(\mathfrak A)\to \R_{\geq0}\cup\{\infty\}$) is:
\begin{enumerate}[\rm (a)]
\item {\em upper continuous} if, given a flow $(G,\phi)$ in $\flow(\mathfrak {LA})$ (respectively, in $\flow(\mathfrak {A})$) that is the direct limit of a directed system of subobjects $\{(K_i,\phi\restriction_{K_i}):i\in I\}$ as above, $h_A(\phi)=\sup\{h_A(\phi\restriction_{K_i}):i\in I\}$; 
\item {\em additive} if, given a short exact sequence as in \eqref{ses} in $\flow(\mathfrak {LA})$ (respectively in $\flow(\mathfrak {A})$), $h(\phi_2)=h(\phi_1)+h(\phi_3).$
\end{enumerate}
\end{definition}

With this new terminology we write the following consequence of Lemma \ref{upp-cont}, which is a result from \cite{DG}.

\begin{corollary}\label{uc}
The function $h_A:\flow(\mathfrak{A})\to \R_{\geq 0}\cup\{\infty\}$ is upper continuous.
\end{corollary}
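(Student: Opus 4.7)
The plan is to reduce Corollary \ref{uc} directly to Lemma \ref{upp-cont}, which we proved in the general LCA setting but only for directed systems of \emph{open} $\phi$-invariant subgroups. The key observation is that in a discrete Abelian group every subgroup is automatically open, since $\{0\}$ is open and hence so is every coset of any subgroup; equivalently, the discrete topology makes every subset open.

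Concretely, first I would take a flow $(G,\phi)\in\flow(\mathfrak A)$ together with a directed family $\{K_i : i\in I\}$ of $\phi$-invariant (closed) subgroups of $G$ with $G=\bigcup_{i\in I}K_i$, so that $(G,\phi)=\varinjlim(K_i,\phi\restriction_{K_i})$ in $\flow(\mathfrak{LA})$ as well. Next I would remark that, since $G$ carries the discrete topology, each $K_i$ is an \emph{open} $\phi$-invariant subgroup of $G$. This places us exactly in the hypotheses of Lemma \ref{upp-cont}, whose conclusion
$$h_A(\phi)=\sup_{i\in I}h_A(\phi\restriction_{K_i})$$
is precisely the assertion of upper continuity to be verified.

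There is essentially no obstacle to overcome: the only subtlety is checking that the notion of direct limit in $\flow(\mathfrak A)$ considered in the definition of upper continuity agrees with the one used in Lemma \ref{upp-cont}, and that the discreteness of $G$ really makes every subgroup open (so that the openness hypothesis of Lemma \ref{upp-cont} is automatically satisfied). Both points are immediate, so the proof collapses to invoking Lemma \ref{upp-cont}.
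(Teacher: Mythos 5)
Your proof is correct and is essentially the argument the paper intends: Corollary \ref{uc} is stated as a consequence of Lemma \ref{upp-cont}, and the reduction is exactly what you observe, namely that in a discrete Abelian group every subgroup is open, so the openness hypothesis of Lemma \ref{upp-cont} holds automatically.
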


Note that Lemma \ref{upp-cont} is far more general than  Corollary \ref{uc}, as it holds for directed systems of invariant open subgroups of arbitrary LCA groups. Nevertheless we do not know whether it is possible to prove upper continuity of $h_A(-)$ in full generality, 
that is, the following problem remains open.

\begin{problem}
Is $h_A:\flow(\mathfrak{LA})\to \R_{\geq 0}\cup\{\infty\}$ upper continuous?
\end{problem}

The first application of the Algebraic Yuzvinski Formula is a deep result from \cite{DG}, called Addition Theorem, which shows the additivity of $h_A(-)$ when restricted to flows of $\mathfrak A$.

\begin{theorem}[Addition Theorem]\label{AT}
The function $h_A:\flow(\mathfrak{A})\to \R_{\geq 0}\cup\{\infty\}$ is additive.
\end{theorem}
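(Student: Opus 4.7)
The plan is to reduce, by upper continuity, to the finitely generated case, and then to combine two sub-cases: short exact sequences with a finite term (handled directly from the definition of $h_A$) and short exact sequences of torsion-free finitely generated groups (handled via the Algebraic Yuzvinski Formula and the multiplicativity of the Mahler measure). By Corollary \ref{uc}, write $B=\varinjlim_i B_i$ as the directed union of its $\phi_2$-invariant finitely generated subgroups, set $A_i:=A\cap B_i$, and identify $C_i:=B_i/A_i$ with its image in $C$. The restricted triples $0\to A_i\to B_i\to C_i\to 0$ form a compatible direct system of short exact sequences in $\flow(\mathfrak{A})$ whose colimit is the original sequence, so upper continuity reduces the problem to $A$, $B$, $C$ all finitely generated.

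For the first sub-case, given a finite $\phi$-invariant subgroup $F$ of a discrete Abelian group $G$, the projection $G\twoheadrightarrow G/F$ sends $T_n(\phi,K)$ onto $T_n(\bar\phi,\bar K)$ with fibres of cardinality at most $|F|$ for every finite $K\ni 0$, and every finite subset of $G/F$ lifts to a finite subset of $G$; hence $h_A(\phi)=h_A(\bar\phi)$, which yields additivity for any short exact sequence with a finite term. For the second sub-case, assume $A\cong\Z^{r_A}$, $B\cong\Z^{r_B}$, $C\cong\Z^{r_C}$ with $r_A+r_C=r_B$. Choosing a $\Z$-basis of $B$ extending a basis of $A$, the matrix of $\phi_2$ is block upper-triangular with diagonal blocks the integer matrices of $\phi_1$ and $\phi_3$, giving $p_{\phi_2}(X)=p_{\phi_1}(X)\cdot p_{\phi_3}(X)\in\Z[X]$. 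Extending scalars produces $\tilde\phi_j=\phi_j\otimes_\Z id_\Q$ on $\Q^{r_j}$ with the same integer matrices. Since $\Q^{r_j}=\varinjlim_n \tfrac{1}{n}\Z^{r_j}$ and multiplication by $n$ is a flow isomorphism $(\tfrac{1}{n}\Z^{r_j},\tilde\phi_j)\to(\Z^{r_j},\phi_j)$, Lemma \ref{upp-cont} combined with Proposition \ref{prop}(1) gives $h_A(\phi_j)=h_A(\tilde\phi_j)$. The Algebraic Yuzvinski Formula (Theorem \ref{H-yuz}) then delivers $h_A(\tilde\phi_j)=m(p_{\phi_j}(X))$, and multiplicativity of the Mahler measure, $m(fg)=m(f)+m(g)$, yields $h_A(\phi_2)=h_A(\phi_1)+h_A(\phi_3)$ in this sub-case.

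To go from the general finitely generated case to the two sub-cases, apply the finite-term sub-case to each $0\to t(X)\to X\to X/t(X)\to 0$ (for $X=A,B,C$) to reduce the entropies of $\phi_j$ to those of the induced endomorphisms on the torsion-free quotients $X/t(X)$. Next, set $B':=\beta^{-1}(t(C))$, a $\phi_2$-invariant subgroup containing both $A$ and $t(B)$; then $B/B'\cong C/t(C)$ is torsion-free finitely generated, and the original sequence induces, inside $B/t(B)$, a finite-cokernel extension $0\to A/t(A)\to B'/t(B)\to t(C)/\beta(t(B))\to 0$ followed by a torsion-free extension $0\to B'/t(B)\to B/t(B)\to C/t(C)\to 0$. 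Applying the appropriate sub-case to each and chaining the additivities gives $h_A(\phi_2)=h_A(\phi_1)+h_A(\phi_3)$. The main obstacle is the bookkeeping of this final step: arranging the filtration so every derived short exact sequence falls into one of the two tractable classes, verifying $\phi$-invariance and exactness throughout, and chaining the additivities without circularity. Once the filtration is correctly in place, the Algebraic Yuzvinski Formula and multiplicativity of the Mahler measure close the argument.
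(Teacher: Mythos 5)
Your reduction in the first paragraph does not work, and the failure is at the heart of what makes the theorem hard. In general a discrete Abelian group $B$ is \emph{not} the directed union of its $\phi_2$-invariant finitely generated subgroups. Take $B=\Q$ with $\phi_2$ multiplication by $1/2$: any finitely generated subgroup of $\Q$ is cyclic, say $a\Z$, and invariance would force $a/2\in a\Z$, so the only $\phi_2$-invariant finitely generated subgroup is $\{0\}$. Similarly, on $B=\bigoplus_{\N}\Z$ with $\phi_2=\beta_\Z$ the right Bernoulli shift, a nonzero $\beta_\Z$-invariant subgroup must contain all $\beta_\Z^n(h)$ for some $h\neq 0$ and hence has infinite rank, so again the only finitely generated invariant subgroup is $\{0\}$ — yet $h_A(\beta_\Z)=\infty$. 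In both examples your proposed filtration collapses, and Corollary~\ref{uc} yields nothing. The difficult part of the Addition Theorem is precisely the non--finitely generated case; what the paper reports from \cite{DG} is a transfinite filtration of a rational vector space by $\phi$-invariant subspaces whose successive quotients are either finite-dimensional (where the Algebraic Yuzvinski Formula applies) or infinite-dimensional with induced map conjugated to $\beta_\Q$, and one chains additivity and upper continuity along that continuous chain, not along a filtration by finitely generated invariant subgroups. Your plan misses this.

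There is a second, smaller gap. You prove the finite-\emph{kernel} case, that for $F\subseteq G$ finite and $\phi$-invariant one has $h_A(\phi)=h_A(\bar\phi)$ on $G/F$, and then assert this ``yields additivity for any short exact sequence with a finite term.'' It does not by itself: your final filtration requires additivity for the sequence $0\to A/t(A)\to B'/t(B)\to t(C)/\beta(t(B))\to 0$, whose \emph{cokernel} is the finite term, and this needs its own argument. (For instance: if $A$ is a $\phi$-invariant subgroup of finite index $m$ in $B$, then for any finite $K\subseteq B$ with $0\in K$ one has $mK\subseteq A$ and the fibres of the map $T_n(\phi,K)\to mT_n(\phi,K)=T_n(\phi,mK)$ are cosets of the finite group $B[m]$, so $H_A(\phi,K)\le H_A(\phi\restriction_A,mK)\le h_A(\phi\restriction_A)$, giving $h_A(\phi)=h_A(\phi\restriction_A)$ together with monotonicity.) Your second sub-case — the splitting $B\cong A\oplus C$, block-triangular integer matrices giving $p_{\phi_2}=p_{\phi_1}p_{\phi_3}$, extension of scalars to $\Q$ via Lemma~\ref{inviluppo}, the Algebraic Yuzvinski Formula, and multiplicativity of the Mahler measure — is correct and is indeed the step where Theorem~\ref{H-yuz} enters; but the passage from the general case to that one is not.
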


In the case of endomorphisms of torsion discrete Abelian groups, Theorem \ref{AT} was proved (for $\ent(-)$) in \cite{DGSZ}. In order to extend it to the whole $\flow(\mathfrak A)$, the Algebraic Yuzvinski Formula is needed. 

The Addition Theorem for the topological entropy of endomorphisms of compact groups can be obtained as a consequence of \cite[Theorem 19]{B}; moreover, it was previously proved in the metric case in \cite{Y}.

\smallskip
Given a flow $(G,\phi)\in\flow(\mathfrak A)$, the following are particular cases of the Addition Theorem:
\begin{enumerate}[\rm (1)]
\item if $(G',\phi')\cong (G,\phi)$ then $h_A(\phi)=h_A(\phi')$;
\item if $K\leq G$ is $\phi$-invariant, then $h_A(\phi)\geq h_A(\phi\restriction_K)$;
\item if $K\leq G$ is $\phi$-invariant, then $h_A(\phi)\geq h_A(\overline\phi)$, where $\overline\phi:G/K\to G/K$ is the map induced by $\phi$.
\end{enumerate}
Note that the stability under isomorphisms in (1) is proved for endomorphisms of LCA groups in Proposition \ref{prop}(1). Moreover, the monotonicity for invariant subgroups in (2) is proved for endomorphisms of LCA groups in Lemma \ref{monotonia}(3), provided the subgroup $K$ is open. We have no counterpart in the general case of the monotonicity for quotients in (3).

\begin{problem}\label{pb-bp}
Consider $h_A:\flow(\mathfrak{LA})\to \R_{\geq0}\cup\{\infty\}$. Is it additive?

If not, is it possible at least to extend (2) or (3) above to the case when $G$ is an LCA group, $\phi\in\End(G)$ and $K$ is a $\phi$-invariant closed subgroup of $G$?
\end{problem}

We want to describe now a last instance of the Addition Theorem. Following Bowen \cite{B}, a subgroup $K$ of an LCA group $G$, is said to be {\em uniform discrete} if it is discrete and $G/K$ is compact. 

\begin{example}\label{ex-dik}
\begin{enumerate}[\rm (a)]
\item First, $\Z$ is uniform discrete in $\R$.
\item If $K_i$  is uniform discrete in $ G_i $ for  $i = 1,\ldots,N$, then $  K = K_1 \times \ldots \times K_N $ is uniform discrete in $G = G_1 \times \ldots \times G_N$  (so  $\Z^N $ is uniform discrete in  $\R^N$).
\item Moreover, $\Z$  diagonally embedded in $\R \times \prod_{p\in\p, p<\infty} \mathbb Z_p$ is uniform discrete.
\item Finally, $\Q$ diagonally embedded in the adele ring $\Q_\mathbb A$ is uniform discrete (see \cite[Lemma 4.1]{LW}).
\end{enumerate}
\end{example}

A consequence of \cite[Theorem 20]{B} is the following

\begin{fact}\label{Fact-Bowen}
If $G$ is a metrizable LCA group, $\phi$ is an endomorphism of $G$, and $K$ is a $\phi$-invariant uniform discrete subgroup of $G$, then
$$h_T(\phi)=h_T(\overline\phi),$$
where $\overline\phi:G/K\to G/K$ is the endomorphism induced by $\phi$.
\end{fact}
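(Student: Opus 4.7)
The plan is to treat $\pi\colon G\to G/K$ as a covering of dynamical systems with discrete fiber $K$ and compact base, and to transfer topological entropy across it; this amounts to an invocation of \cite[Theorem~20]{B}, whose hypotheses are met since $K$ is closed, $\phi$-invariant, and cocompact in $G$. For a self-contained sketch I would proceed in three steps.

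Setup: I would fix a Haar measure $\mu$ on $G$; since $K$ is closed and cocompact, integration over a Borel fundamental domain $F$ of $K$ induces the finite Haar measure $\bar\mu$ on the compact group $G/K$. Because $K$ is discrete, I would pick a compact symmetric zero-neighborhood $V\subseteq G$ such that $(V+V)\cap K=\{0\}$, so that $\pi|_V$ is a homeomorphism and $\mu(E)=\bar\mu(\pi E)$ for every Borel $E\subseteq V$. By a cofinality argument analogous to Lemma~\ref{monotonia}(2), the family $\{U\in\c(G):U\subseteq V\}$ suffices for computing $h_T(\phi)$, and its image $\{\pi U:U\subseteq V\}$ suffices for computing $h_T(\overline\phi)$.

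The easy inequality $h_T(\phi)\geq h_T(\overline\phi)$ now follows directly. For each compact $U\subseteq V$, the equivariance $\pi\phi=\overline\phi\pi$ gives the inclusion $\pi(C_n(\phi,U))\subseteq C_n(\overline\phi,\pi U)$; the injectivity of $\pi|_U$ together with the compatibility of $\mu$ and $\bar\mu$ on $V$ yields $\mu(C_n(\phi,U))\leq\bar\mu(C_n(\overline\phi,\pi U))$. Passing to $-\log(\cdot)/n$ and taking $\limsup$ gives the inequality pointwise, and supping over $U$ gives the claim.

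The reverse inequality $h_T(\phi)\leq h_T(\overline\phi)$ is the heart of the matter. I would begin with the identity
$$\pi^{-1}\bigl(C_n(\overline\phi,\pi U)\bigr)=C_n(\phi,\,U+K),$$
which holds by $\phi$-invariance of $K$; intersecting with a fundamental domain $F\supseteq V$ of $K$ yields $\bar\mu(C_n(\overline\phi,\pi U))=\mu\bigl(F\cap C_n(\phi,U+K)\bigr)$. The task is then to compare $F\cap C_n(\phi,U+K)$ with $C_n(\phi,U)$, possibly after mild enlargement: the discrepancy comes from points $x\in F$ whose orbit $\phi^i(x)$ hops between different $K$-translates of $U$, and the number of admissible hop-sequences $(k_0,\dots,k_{n-1})\in K^n$ grows sub-exponentially in $n$ by uniform discreteness of $K$ combined with the continuity of $\phi$ on the compact set $F$. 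The main obstacle is making this last combinatorial estimate rigorous and uniform in $n$; this is the substance of Bowen's argument and is where the metrizability hypothesis on $G$ enters, ensuring the countability and measurability needed to carry out the estimate.
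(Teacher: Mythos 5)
The paper does not actually prove this statement: it is presented as a \emph{Fact}, quoted verbatim as ``A consequence of \cite[Theorem 20]{B}'', with no argument given. So there is no paper proof to compare your sketch against; you and the authors are both invoking Bowen's Theorem~20. That said, your sketch attempts to fill in the Bowen argument, and it is worth checking whether the outline is sound.

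Your setup and the easy inequality $h_T(\phi)\geq h_T(\overline\phi)$ are fine (modulo the harmless slip of saying ``cofinal'' where you mean ``coinitial'' --- for $h_T$ the quantity $\frac{-\log\mu(C_n(\phi,U))}{n}$ is \emph{decreasing} in $U$, so one restricts to small $U$, the opposite direction from Lemma~\ref{monotonia}(2) which concerns $H_A$).

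The hard inequality, however, contains a substantive error, not just an unfinished estimate. You claim that the number of admissible hop-sequences $(k_0,\dots,k_{n-1})\in K^n$ grows sub-exponentially in $n$; this is false in general, even in the simplest interesting case. Take $G=\R$, $K=\Z$, $\phi$ multiplication by $2$, and $U=[-1/4,1/4]$. Then $F\cap C_n(\phi,U+\Z)$ for $F=[0,1)$ consists of roughly $2^{n}$ disjoint intervals, so the number of hop-sequences is exponential. If one applies your proposed bound --- each hop-sequence contributes a set of measure at most $\mu(C_n(\phi,U'))$ for a mildly enlarged $U'$, and there are $c^n$ hop-sequences --- one obtains only $h_T(\phi)\le h_T(\overline\phi)+\log c$, which is useless. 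What actually makes the example come out right is that the $\sim 2^n$ hop-regions have measure $\sim 4^{-n}$ each (far smaller than $\mu(C_n(\phi,U))\sim 2^{-n}$), so their total measure is still comparable to $\mu(C_n(\phi,U))$. This imbalance across hop-sequences is exactly what the naive count throws away, and recovering it requires a genuinely different argument than the one you outline; it is not merely a matter of ``making the combinatorial estimate rigorous.'' Since Bowen's Theorem~20 is being used as a black box anyway, the honest move is to quote it, as the paper does, rather than sketch a proof whose central step cannot be carried out.
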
  

Since $h_T(-)$ is trivial on discrete groups, the above fact can be viewed as a particular case of some Addition Theorem for $h_T(-)$ on LCA groups. 

\smallskip
As we have described in Section \ref{LW-sub}, Fact \ref{Fact-Bowen} and Example \ref{ex-dik}(d) were used to prove \cite[Theorem 1]{LW}, that was a fundamental step in the proof of the Yuzvinski Formula for solenoidal automorphisms given in \cite{LW}; on the other hand, we have given a direct proof of the counterpart of \cite[Theorem 1]{LW} for the algebraic entropy, that is Theorem \ref{contributi}, as it is not known whether the counterpart of Fact \ref{Fact-Bowen} holds for the algebraic entropy. In this direction, it would be interesting to answer the following question, suggested to us by Dikran Dikranjan.

\begin{problem}\label{pb-bowen}
Given $(G,\phi)\in\flow(\mathfrak{LA})$ and a uniform discrete $\phi$-invariant subgroup $K$ of $G$, is it true that $h_A(\phi)=h_A(\phi\restriction_K)$?
\end{problem}

Since $h_A(-)$ trivializes on compact Abelian groups, a positive answer to Problem \ref{pb-bp}, namely, additivity of $h_A(-)$ in the general case of flows of $\mathfrak{LA}$, would answer positively to this problem as well.

\bigskip
Our second application is to show how the Algebraic Yuzvinski Formula can be used to compute the algebraic entropy of any endomorphism of a torsion-free discrete Abelian group. The first step is to extend a given endomorphism to an endomorphism of a rational vector space with the same algebraic entropy:

\begin{lemma}\label{inviluppo}\cite[Proposition 2.12]{DG}
Let $G$ be a torsion-free discrete Abelian group and $\phi:G\to G$ an endomorphism. Denote by $D(G)=G\otimes_{\Z}\Q$ the divisible hull of $G$ and by $\phi\otimes_\Z id_\Q=\widetilde\phi:D(G)\to D(G)$ the unique extension of $\phi$ to $D(G)$. Then $h_A(\phi)=h_A(\widetilde\phi)$.
\end{lemma}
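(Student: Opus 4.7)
Recall that, since $G$ is torsion-free, the canonical map $G\to D(G)=G\otimes_\Z\Q$ is injective, so we identify $G$ with its image in $D(G)$; both groups are discrete, and $D(G)$ is a $\Q$-vector space. The extension $\widetilde\phi=\phi\otimes_\Z id_\Q$ is $\Q$-linear and satisfies $\widetilde\phi\restriction_G=\phi$, so in particular $G$ is a $\widetilde\phi$-invariant subgroup of $D(G)$. Since $D(G)$ is discrete, every subgroup is open, hence Lemma \ref{monotonia}(3) immediately yields the easy inequality $h_A(\phi)\leq h_A(\widetilde\phi)$.

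The bulk of the argument is the reverse inequality $h_A(\widetilde\phi)\leq h_A(\phi)$, for which the natural strategy is to reduce every finite subset of $D(G)$ to a finite subset of $G$ via a suitable rescaling. More precisely, fix $F\in\c(D(G))$; since $F$ is finite and every element of $D(G)$ can be written as $x\otimes(1/n)$ with $x\in G$ and $n\in\N_+$, I can choose a positive integer $n$ such that $nF\subseteq G$ (just take $n$ to be the least common multiple of the denominators appearing in the elements of $F$). Let $\mu_n\colon D(G)\to D(G)$ denote multiplication by $n$: because $D(G)$ is a $\Q$-vector space, $\mu_n$ is an automorphism of the discrete group $D(G)$, and $\Q$-linearity of $\widetilde\phi$ gives the commutation $\mu_n\widetilde\phi=\widetilde\phi\mu_n$, that is, $\widetilde\phi$ is conjugated to itself by $\mu_n$.

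Applying Proposition \ref{prop}(1) to this conjugation yields
$$H_A(\widetilde\phi,F)=H_A(\widetilde\phi,\mu_nF)=H_A(\widetilde\phi,nF).$$
Now $nF\subseteq G$ and $G$ is $\widetilde\phi$-invariant with $\widetilde\phi\restriction_G=\phi$, so for every $k\in\N_+$ the $\widetilde\phi$-trajectory and the $\phi$-trajectory of $nF$ coincide as subsets of $G\subseteq D(G)$; measuring with counting measure (Example \ref{ent=h_A}(b)) we get $H_A(\widetilde\phi,nF)=H_A(\phi,nF)\leq h_A(\phi)$. Taking the supremum over all $F\in\c(D(G))$ gives $h_A(\widetilde\phi)\leq h_A(\phi)$, which combined with the easy inequality above completes the proof.

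There is no real obstacle here: the only point requiring attention is to ensure that the conjugation $\mu_n\widetilde\phi\mu_n^{-1}=\widetilde\phi$ is available, which is exactly the $\Q$-linearity of $\widetilde\phi$ on the $\Q$-vector space $D(G)$, together with the fact that $\mu_n$ is a bona fide (discrete) automorphism of $D(G)$ — a feature that would fail if $G$ had torsion, which is exactly why the statement restricts to torsion-free $G$.
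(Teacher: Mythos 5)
Your proof is correct. It is close in spirit to the paper's own argument — both exploit the fact that multiplication by $n$ is an automorphism of the $\Q$-vector space $D(G)$ commuting with $\widetilde\phi$, reducing finite subsets of $D(G)$ to finite subsets of $G$ via Proposition~\ref{prop}(1) and Lemma~\ref{monotonia}(3). The one difference is that the paper reaches the conclusion by writing $D(G)=\varinjlim_{n}\frac{1}{n!}G$ and invoking the upper-continuity Lemma~\ref{upp-cont}, whereas you argue directly per finite set $F\in\c(D(G))$, rescaling by a single integer $n$ with $nF\subseteq G$. Your version is marginally more self-contained (it bypasses the direct-limit lemma), and the clean split into the two inequalities makes the role of $\Q$-linearity — and hence the torsion-free hypothesis — more transparent; functionally the two proofs are the same.

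One small stylistic caution: you write $\mu_n$ for multiplication by $n$, but $\mu$ is reserved throughout the paper for Haar measure, so a different symbol (e.g.\ $\varphi_n$, matching the paper's $\varphi_a$ in Section~\ref{proof}) would avoid a potential clash.
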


Note that the Algebraic Kolmogorov-Sinai Formula stated in the Introduction can be proved using Lemma \ref{inviluppo} and the Algebraic Yuzvinski Formula.

\medskip
We just sketch an argument for Lemma \ref{inviluppo}, in order to make this paper self-contained. Indeed, $D(G)$ is the direct limit of its $\widetilde \phi$-invariant subgroups of the form $\frac{1}{n!} G$, with $n$ ranging in $\N$. Hence, by Lemma \ref{upp-cont} we have that $h_A(\widetilde \phi)$ is the supremum of the algebraic entropies of the restrictions of $\widetilde \phi$ to each $\frac{1}{n!} G$.
Furthermore, the groups $\frac{1}{n!} G$ are all isomorphic to $G$ and the action of $\widetilde\phi$ on $\frac{1}{n!} G$ is conjugated to the action of $\phi$. Hence
$$h_A(\widetilde \phi)=\sup\left\{h_A\left(\phi\restriction_{\frac{1}{n!} G}\right):{n\in\N}\right\}=h_A(\phi),$$
by Lemma \ref{prop}(1).

\medskip
We remark that Lemma \ref{inviluppo} is the exact counterpart of \cite[Proposition 3.1]{LW} that allows the computation of the topological entropy of a solenoidal  automorphism extending it to an automorphism of a full solenoid which has the same topological entropy.

\medskip
Let $G$ be a torsion-free discrete Abelian group and let $\phi$ be an endomorphism of $G$. To compute the algebraic entropy of $\phi$, we can suppose $G$ to be a rational vector space in view of Lemma \ref{inviluppo}. 
One can realize the group $G$ as the union of a continuous chain of $\phi$-invariant subspaces
\begin{equation}\label{chain}0=K_0\subseteq K_1 \subseteq  K_2\subseteq \ldots \subseteq  K_\sigma=G\end{equation}
for some ordinal $\sigma$; denote by $\phi_{\gamma}$ the endomorphism  of $K_{\gamma+1}/K_{\gamma}$ induced by $\phi$, for all $\gamma< \sigma$. It is shown in \cite{DG} that the chain in \eqref{chain} can be constructed in such a way that either $K_{\gamma+1}/K_{\gamma}$ is finite dimensional or it is infinite dimensional and $\phi_\gamma$ is conjugated to the right Bernoulli shift $\beta_\Q$ described in Example \ref{bernoulli}, for all $\gamma<\sigma$. Additivity and upper continuity of $h_A(-)$ allow one to prove by transfinite induction that
$$h_A(\phi)=\sum_{\gamma<\sigma}h_A(\phi_\gamma),$$
where the algebraic entropy of each $\phi_\gamma$ is either infinite or it can be computed using the Algebraic Yuzvinski Formula.

\medskip
In the above discussion we could determine the algebraic entropy of an endomorphism of a torsion-free discrete Abelian group $G$ just using additivity, upper continuity and the specific values that $h_A(-)$ takes on the right Bernoulli shifts and on endomorphisms of finite dimensional rational vector spaces (given by the Algebraic Yuzvinski Formula). Using more carefully the same arguments as above, one can prove the following result from \cite{DG}.

\begin{theorem}[Uniqueness Theorem]\label{UT}
The algebraic entropy $h_A:\flow(\mathfrak A)\to\mathbb R_{\geq0}\cup\{\infty\}$ is the unique function such that:
\begin{enumerate}[\rm (1)]
\item $h_A(-)$ is additive;
\item $h_A(-)$ is upper continuous;
\item $h_{A}(\beta_K)=\log|K|$ for any finite Abelian group $K$ (where $\beta_K$ is the right Bernoulli shift defined in Example \ref{bernoulli});
\item the Algebraic Yuzvinski Formula holds for $h_A(-)$.
\end{enumerate}
\end{theorem}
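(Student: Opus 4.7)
The plan is to prove that any function $h' : \flow(\mathfrak A) \to \R_{\geq 0} \cup \{\infty\}$ satisfying (1)--(4) must coincide with $h_A$, by decomposing an arbitrary flow into building blocks on which axioms (3) and (4) directly pin down the value, while (1) and (2) assemble the pieces. The first reduction uses additivity on the short exact sequence $0 \to t(G) \to G \to G/t(G) \to 0$, where $t(G)$ is the torsion subgroup, splitting the problem into the torsion and torsion-free cases.

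For the torsion-free case, I would first replay the argument sketched after Lemma \ref{inviluppo}: the divisible hull $D(G) = G \otimes_\Z \Q$ is the directed union of its $\widetilde\phi$-invariant subgroups $\frac{1}{n!}G$, each conjugate to $(G,\phi)$; since this argument uses only upper continuity and conjugation-invariance (itself a trivial consequence of additivity applied to an isomorphism), it applies equally well to $h'$, so we may assume $G$ is a rational vector space. Next, invoking from \cite{DG} the existence of a continuous $\phi$-invariant filtration as in \eqref{chain} whose successive factors $\phi_\gamma$ on $K_{\gamma+1}/K_\gamma$ are each either finite-dimensional over $\Q$ or conjugate to the right Bernoulli shift $\beta_\Q$, a transfinite induction using additivity at successor steps and upper continuity at limit steps yields $h'(\phi) = \sum_{\gamma<\sigma} h'(\phi_\gamma)$, and analogously for $h_A$. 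For finite-dimensional pieces, axiom (4) gives $h'(\phi_\gamma) = m(p_{\phi_\gamma}(X)) = h_A(\phi_\gamma)$ directly.

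It remains to force $h'(\beta_\Q) = \infty = h_A(\beta_\Q)$. For this, consider first $\beta_\Z$ acting on $\bigoplus_\N \Z$: the sequence $0 \to \bigoplus_\N n\Z \to \bigoplus_\N \Z \to \bigoplus_\N \Z/n\Z \to 0$ is a short exact sequence in $\flow(\mathfrak A)$, and since $\beta_{n\Z}$ is conjugate to $\beta_\Z$ (via componentwise multiplication by $1/n$), additivity together with axiom (3) forces $h'(\beta_\Z) = h'(\beta_\Z) + \log n$ for every $n\geq 2$, hence $h'(\beta_\Z) = \infty$. The inclusion $\bigoplus_\N \Z \hookrightarrow \bigoplus_\N \Q$ is $\beta_\Q$-equivariant, so additivity gives $h'(\beta_\Q) \geq h'(\beta_\Z) = \infty$. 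For the torsion case, the primary decomposition $G = \bigoplus_p G_p$ is automatically $\phi$-invariant, and upper continuity combined with additivity on finite direct sums reduces the problem to $p$-primary groups, on which one can invoke the uniqueness result from \cite{DGSZ} (its axiomatics being implied by (1)--(3), together with the identification $h_A = \ent$ on torsion groups provided by Corollary \ref{hA=ent}).

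The main technical obstacle lies in the existence of the filtration \eqref{chain} with successive quotients of the prescribed form, which is a structural result about flows of $\Q$-vector spaces proved in \cite{DG} using the $\Q[X]$-module structure induced by $\phi$; once this filtration is available, the whole assembly via (1), (2), (3), (4) is essentially a routine transfinite bookkeeping, and no further use of the intrinsic definition of $h_A$ is needed beyond the four axioms.
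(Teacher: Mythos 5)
Your proof is correct and follows essentially the same route as the paper's (sketched) argument: split into torsion and torsion-free cases, reduce the torsion-free case to $\Q$-vector spaces via the divisible hull, apply the filtration from \cite{DG} with finite-dimensional or Bernoulli-shift quotients, and use transfinite induction via additivity and upper continuity, with axioms (3) and (4) pinning down the values on the building blocks. Your explicit derivation of $h'(\beta_\Q)=\infty$ from the exact sequence $0\to\bigoplus_\N n\Z\to\bigoplus_\N\Z\to\bigoplus_\N\Z/n\Z\to0$ and conjugation invariance of $\beta_\Z$ with $\beta_{n\Z}$ is a clean way to fill in a step the paper leaves implicit; the only caveat is that your claim that the full \cite{DGSZ} axiomatics (in particular the logarithmic law) follow from (1)--(3) should more precisely be the paper's assertion that those extra axioms can be \emph{dropped} from the uniqueness statement for torsion groups, which the paper itself states without proof.
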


The first three axioms in the Uniqueness Theorem are enough to characterize the algebraic entropy in the class of torsion discrete Abelian groups as it was proved in \cite[Theorem 6.1]{DGSZ}. More precisely, other two axioms appeared in the Uniqueness Theorem from \cite{DGSZ}, that is, invariance under conjugation and a logarithmic law; nevertheless, it can be proved that these two axioms are not necessary.

Moreover, it is worth to note that the Uniqueness Theorem for the algebraic entropy was inspired by the Uniqueness Theorem proved by Stojanov \cite{S} for the topological entropy in the class of compact (non-necessarily Abelian) groups.

\bigskip
A third application of the Algebraic Yuzvinski Formula is the following general form of the Bridge Theorem, that extends to all endomorphisms of discrete Abelian groups  both Weiss Bridge Theorem and Peters Bridge Theorem.

\begin{theorem}[Bridge Theorem]\label{BT}
 Let $(G,\phi)\in\flow(\mathfrak A)$. Then $h_A(\phi)=h_T(\widehat\phi)$.
\end{theorem}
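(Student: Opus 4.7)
My plan is to invoke the Uniqueness Theorem~\ref{UT}. I would define
$$\tilde h : \flow(\mathfrak A)\to\R_{\geq 0}\cup\{\infty\}, \qquad \tilde h(G,\phi) := h_T(\widehat\phi),$$
and show that $\tilde h$ satisfies the four characterizing axioms of $h_A$; by uniqueness, $\tilde h = h_A$, which is precisely the Bridge Theorem. The entire proof then reduces to verifying these four axioms for $\tilde h$.

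The verification would proceed as follows. For \emph{additivity}, any short exact sequence $0\to A\to B\to C\to 0$ in $\flow(\mathfrak A)$ dualizes via Pontryagin to a short exact sequence $0\to\widehat C\to\widehat B\to\widehat A\to 0$ of flows of compact Abelian groups, so the classical additivity of topological entropy on compact Abelian groups (a consequence of Bowen \cite[Theorem 19]{B}, as noted right after Theorem~\ref{AT} in the excerpt) immediately gives $\tilde h(\phi_2) = \tilde h(\phi_1) + \tilde h(\phi_3)$. For \emph{upper continuity}, a direct limit $G = \varinjlim K_i$ in $\flow(\mathfrak A)$ dualizes to $\widehat G = \varprojlim \widehat G/K_i^\perp$, an inverse limit of compact Abelian groups along surjective bonding maps whose kernels $N_i = K_i^\perp$ form a decreasing $\widehat\phi$-invariant family with trivial intersection; the standard continuity of $h_T$ under such inverse limits (see \cite{DSV,WardLN}) delivers $h_T(\widehat\phi) = \sup_i h_T(\widehat{\phi\restriction_{K_i}})$. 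For the \emph{Bernoulli value}, the dual of $\beta_K$ is the (left) Bernoulli shift on the full product $\prod_\N \widehat K$, whose topological entropy equals $\log|\widehat K| = \log|K|$. For the \emph{Algebraic Yuzvinski Formula}, if $\phi:\Q^N\to\Q^N$ then $\widehat\phi$ acts on the full solenoid $\widehat\Q^N$ with the same characteristic polynomial $p_\phi(X)$ (the matrix of $\widehat\phi$ being the transpose of that of $\phi$, as explained in the introduction of the excerpt), so the classical Yuzvinski Formula computes $\tilde h(\phi) = h_T(\widehat\phi) = m(p_\phi(X))$, matching the Algebraic Yuzvinski Formula for $h_A$.

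The main obstacle is external to this paper: I must appeal to the additivity of $h_T$ on compact Abelian groups and to its continuity under inverse limits, both of which are classical but nontrivial. A more self-contained alternative would be a direct structural reduction — split off the torsion part via Weiss Bridge Theorem, then pass from the torsion-free quotient to its divisible hull via Lemma~\ref{inviluppo}, use upper continuity on the resulting $\Q$-vector space to cut down to $\Q^N$, and invoke both Yuzvinski Formulas on the two sides. This direct route bypasses the Uniqueness Theorem but still requires the same additivity and continuity properties of $h_T$, so the Uniqueness Theorem approach is the most economical packaging of the argument.
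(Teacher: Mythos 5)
Your primary route, via the Uniqueness Theorem~\ref{UT}, is correct and is genuinely different from the argument the paper sketches after Theorem~\ref{BT}. The paper's proof is the direct structural reduction you describe as your alternative: first dispose of the torsion case by the Weiss Bridge Theorem, then use additivity and upper continuity of $h_A$ together with additivity and ``continuity on inverse limits'' of $h_T$ to restrict to automorphisms of $\Q^N$ and $\widehat{\Q}^N$, and finally invoke both Yuzvinski Formulas. Your packaging instead verifies the four axioms of Theorem~\ref{UT} for the function $\tilde h(G,\phi)=h_T(\widehat\phi)$: each axiom transfers cleanly across Pontryagin duality exactly as you write (short exact sequences dualize to short exact sequences of compact flows, direct limits to inverse limits with surjective bonding maps, the right shift $\beta_K$ to the left shift on $\prod_\N\widehat K$, and the characteristic polynomial is preserved since $M_{\widehat\phi}=M_\phi^{\mathrm T}$). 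What the Uniqueness Theorem route buys is modularity: all the structural reduction work is done once, inside the proof of Theorem~\ref{UT}, and the Bridge Theorem falls out as a corollary of four dualization computations. What it costs is transparency: the content of the reduction is hidden, and the two routes invoke the identical external facts about $h_T$ on compact groups (Bowen's additivity, continuity on inverse limits, the shift value, the classical Yuzvinski Formula), so neither is logically lighter. You correctly note that there is no circularity — Theorem~\ref{UT} is proved entirely within the algebraic setting and does not depend on Theorem~\ref{BT}. One small point worth making explicit if you write this up: the identification $\widehat{\bigoplus_\N K}\cong\prod_\N\widehat K$ carries the \emph{right} Bernoulli shift to the \emph{left} (one-sided, non-invertible) shift, and it is the latter whose topological entropy is $\log|\widehat K|$; the paper's dictionary between discrete and compact Bernoulli shifts is built on precisely this observation.
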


The torsion case of this theorem is covered by Weiss Bridge Theorem. Then additivity and upper continuity of $h_A(-)$, and additivity and ``continuity on inverse limits" of $h_T(-)$, are applied to restrict to the case of automorphisms of $\Q^N$ and of its dual $\widehat{\Q}^N$. At this stage the Algebraic Yuzvinski Formula and the Yuzvinski Formula conclude the proof.

\medskip
The problem of whether it is possible to extend the Bridge Theorem to the whole $\flow(\mathfrak{LA})$ is open (see also \cite{DG-BT}):

\begin{problem}
Does the Bridge Theorem hold for every $(G,\phi)\in\flow(\mathfrak {LA})$?
Classify the flows $(G,\phi)\in\flow(\mathfrak{LA})$ such that $h_A(\phi)=h_T(\widehat\phi)$.
%
\end{problem}

\bigskip
We conclude with a last application of the Algebraic Yuzvinski Formula coming from \cite{DG1}.

\smallskip
Consider a fixed LCA group $G$, a Haar measure $\mu$ on $G$, and an endomorphism $\phi:G\to G$. For every $C\in\c(G)$ we can define a sequence 
$$\tau_C:\N_+\to \R_{\geq 0}\ \text{such that} \ n\mapsto \tau_C(n)=\mu(T_n(\phi,C)).$$
In Section \ref{Bp-ae} we said that the $\phi$-trajectory of $C$ converges exactly when the sequence $\{\frac{\log\tau_C(n)}{n}:n\in\N\}$ is convergent. Furthermore we saw in Proposition \ref{prop} that it is very useful to know whether the $\phi$-trajectory of $C$ converges for every $C\in \c(G)$. This occurs when $G$ is compact (the above sequences converge to $0$), discrete (see for example \cite[Corollary 2.2]{DG}), $G=\R^N$ or $G=\Q_p^N$ with $N$ a positive integer and $p$ a prime (use Proposition \ref{jordan-p}). We do not know if this holds in general: 

\begin{problem}
Let $(G,\phi)\in\flow(\mathfrak{LA})$ and $C\in\c(G)$. Does the $\phi$-trajectory of $C$ converge? 

If this is not true in general, classify the flows $(G,\phi)\in\flow(\mathfrak{LA})$ such that the $\phi$-trajectory of $C$ converges for every $C\in\c(G)$.
\end{problem}

Now we restrict to the context of discrete Abelian groups.
Following \cite{DG1}, fix a flow $(G,\phi)\in\flow(\mathfrak A)$; as usual we consider on $G$ the Haar measure given by the cardinality of subsets. Hence, given $F\in\c(G)$, the sequence defined above becomes
$$\tau_F:\N_+\to \R_{\geq 0}\ \text{such that}\  n\mapsto \tau_F(n)=|T_n(\phi,F)|.$$
For every $n\in\N_+$, $\tau_F(n)\leq |F|^n$, thus the sequence $\{\tau_F(n):n\in\N_+\}$ has at most exponential growth. This justifies the following definitions given in \cite{DG1}:
\begin{enumerate}[--]
\item $(G,\phi)$ has \emph{exponential growth} at $F$ if there exists $b\in\R$, $b>1$, such that $\tau_F(n)\geq b^n$ for every $n\in\N_+$;
\item $(G,\phi)$ has \emph{polynomial growth} at $F$ if there exists $P_F(X)\in\Z[X]$ such that $\tau_F(n)\leq P_F(n)$ for every $n\in\N_+$.
\end{enumerate}

If the growth of $(G,\phi)$ at $F$ is polynomial, then $H_A(\phi,F)=0$. On the other hand, if the growth of $(G,\phi)$ at $F$ is exponential, then $H_A(\phi,F)\neq 0$. Nevertheless, for an arbitrary sequence there are a lot of possible growths between polynomial and exponential. 
One of the main results from \cite{DG1} states that this is not the case for sequences of the form $\{\tau_F(n):n\in\N_+\}$:

\begin{theorem}[Dichotomy Theorem]
Let $(G,\phi)\in\flow(\mathfrak A)$ and $F\in\c(G)$. Then:
\begin{enumerate}[\rm (1)]
\item$H(\phi,F)=0$ if and only if $(G,\phi)$ has polynomial growth at $F$; 
\item$H(\phi,F)>0$ if and only if $(G,\phi)$ has exponential growth at $F$.
\end{enumerate}
In particular, $(G,\phi)$ has either exponential or polynomial growth at $F$.
\end{theorem}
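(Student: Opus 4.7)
The directions ``polynomial growth $\Rightarrow H_A(\phi,F)=0$'' and ``exponential growth $\Rightarrow H_A(\phi,F)>0$'' are immediate from the definitions, and combined with the two nontrivial implications they yield the last ``in particular'' dichotomy. For ``$H_A(\phi,F)>0 \Rightarrow$ exponential'', I would first observe that $T_{n+m}(\phi,F) = T_n(\phi,F) + \phi^n T_m(\phi,F)$ gives the submultiplicativity $\tau_F(n+m) \le \tau_F(n) \tau_F(m)$. Applying Fekete's subadditive lemma to the sequence $\log \tau_F(n)$, we obtain $H_A(\phi, F) = \inf_n \log \tau_F(n)/n$, so $\tau_F(n) \ge e^{n H_A(\phi,F)}$ for every $n \ge 1$; since $0 \in F$ guarantees $\tau_F(n) \ge 1$, this is exponential growth with base $b = e^{H_A(\phi, F)} > 1$.

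The hard direction is ``$H_A(\phi, F) = 0 \Rightarrow$ polynomial growth''. First I would replace $G$ by the $\phi$-invariant subgroup $\Z[\phi] F$ generated by $F$ via Lemma \ref{monotonia}(3) (in the discrete setting every subgroup is open), so $F$ generates $G$ as a $\Z[\phi]$-module. Then I would split via the short exact sequence $0 \to t(G) \to G \to \bar G \to 0$, with $\bar G = G/t(G)$ torsion-free, and set $\bar F = \pi(F)$; since $|T_n(\bar\phi, \bar F)| \le |T_n(\phi, F)|$, one has $H_A(\bar\phi, \bar F) = 0$. Embedding $\bar G$ into its divisible hull $V = \bar G \otimes \Q$, I would first show $\dim_\Q V < \infty$: if some $\bar f \in \bar F$ were $\Q[\phi_V]$-free, its iterates $\{\phi^i f\}_{i \ge 0}$ would be $\Z$-linearly independent in $\bar G$, forcing $|T_n(\phi, \{0,f\})| = 2^n$ and contradicting $H_A(\phi, \{0,f\}) \le H_A(\phi, F) = 0$ (since $\{0, f\} \subseteq F$). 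Hence $V \cong \Q^N$ and Theorem \ref{H-yuz} gives $h_A(\phi_V) = m(p_{\phi_V})$. Next I would derive $m(p_{\phi_V}) = 0$: combining the $p$-adic decomposition of Theorems \ref{contributi} and \ref{m=mp} with Theorem \ref{yuz-pp} (where $H_A(\phi_p, C)$ is the same positive constant for every $C \in \c(\Q_p^N)$ whenever $h_A(\phi_p) > 0$), and using that $\bar F$ generates $V$ as a $\Q[\phi]$-module so that after rescaling by some $\varphi_a$ an $E_m$-type set from Proposition \ref{simp1} can be realized inside $\bar F$-trajectories, any eigenvalue with $|\lambda|_p > 1$ at some $p \in \p$ would force $H_A(\bar\phi, \bar F) > 0$. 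Kronecker's theorem applied to the monic integer $p_{\phi_V}$ then forces every eigenvalue of $\phi_V$ to be $0$ or a root of unity, the minimal polynomial of $\phi_V$ is monic in $\Z[X]$, $\Z[\phi_V]$ is finitely generated over $\Z$, and $\bar G = \Z[\phi]\bar F$ is finitely generated abelian, hence $\cong \Z^K$; Jordan form gives $\|\phi^n\| = O(n^d)$, so $|T_n(\bar\phi, \bar F)| = O(n^{(d+1)K})$.

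For the torsion kernel I would show $t(G)$ is finite. Every $f \in F \cap t(G)$ satisfies $H_A(\phi, \{0, f\}) = 0$ by $\{0,f\} \subseteq F$, and a module-theoretic analysis of the cyclic module $\Z[\phi] f \cong (\Z/n_0)[X]/I'$ (reducing via the Chinese remainder theorem to $n_0 = p^e$ and inducting on $e$; in the base case $e=1$ infiniteness of $(\Z/p)[X]/I'$ forces $I' = 0$, so the $\{0,1\}$-coefficient polynomials of degree $<n$ are distinct modulo $I'$, giving $|T_n(\phi, \{0,f\})| = 2^n$) shows that $\Z[\phi] f$ must be finite. The general torsion elements are handled via the Addition Theorem (Theorem \ref{AT}): since the first half of the argument already gives $h_A(\bar\phi|_{\bar G}) = 0$, additivity reduces the task to showing $h_A(\phi|_{t(G)}) = 0$, and upper continuity (Corollary \ref{uc}) reduces this further to finiteness of each finitely generated $\phi$-invariant $\Z[\phi]$-submodule of $t(G)$, settled by the same induction. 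Once $t(G)$ is finite, $|T_n(\phi, F)| \le |t(G)| \cdot |T_n(\bar\phi, \bar F)| = O(n^{(d+1)K})$, completing the proof.

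The most delicate step will be deducing $m(p_{\phi_V}) = 0$ from $H_A(\bar\phi, \bar F) = 0$ when $\bar F$ is the given generating set rather than one of the standard $E_m$ of Proposition \ref{simp1}: this requires a nontrivial comparison between the trajectories of $\bar F$ and those of the $E_m$ via the $\varphi_a$ conjugations of Proposition \ref{simp1}, combined with the $p$-adic minor-trajectory bounds of Proposition \ref{appr-p}(2).
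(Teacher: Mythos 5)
The paper does not actually prove the Dichotomy Theorem: it is stated as a result from \cite{DG1} (``The Pinsker subgroup of an algebraic flow''), and the only hint the paper gives about the proof is that it uses the corollary on roots of unity (via Kronecker's theorem) to find non-zero periodic points of an automorphism of $\Q^N$ with zero algebraic entropy. So there is no ``paper's own proof'' to compare your attempt against; the most one can say is that your reliance on Kronecker's theorem and the Algebraic Yuzvinski Formula matches the ingredient the paper attributes to the \cite{DG1} proof, and the overall torsion/torsion-free decomposition combined with module theory over $\Z[\phi]$ is very plausibly the route taken there.

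As for the attempt on its own terms: the Fekete argument for $H_A(\phi,F)>0\Rightarrow$ exponential and the finite-dimensionality of $V$ (via $\Q[\phi_V]$-free elements giving $|T_n(\phi,\{0,f\})|=2^n$) are both correct. The main structural issue is that you never isolate the lemma that is really doing the work, namely: if $F$ generates $G$ as a $\Z[\phi]$-module and $H_A(\phi,F)=0$, then $H_A(\phi,F')=0$ for \emph{every} finite $F'\subseteq G$, i.e.\ $h_A(\phi)=0$. This follows from elementary trajectory estimates --- $T_n(\phi,C-C)=T_n(\phi,C)-T_n(\phi,C)$, $T_n(\phi,T_k(\phi,C))\subseteq T_{n+k-1}(\phi,kC)$ and $|T_n(\phi,MC)|\leq|T_n(\phi,C)|^M$, giving $H_A(\phi,F')\leq 2kM\, H_A(\phi,F)$ once $F'\subseteq T_k(\phi,M(F-F))$ --- and once stated it cleans up both halves of your argument. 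In the torsion-free half it gives $H_A(\bar\phi,E_m)=0$ directly (note that $E_m$ contains negatives, so you must pass through $\bar F-\bar F$ as above, not $\bar F$ alone, before invoking Proposition~\ref{simp1}), hence $h_A(\phi_V)=0$ by Theorem~\ref{H-yuz} without the murky $p$-adic detour you sketch. In the torsion half, you start from $F\cap t(G)$, which may well be $\{0\}$ even when $t(G)$ is non-trivial, and your ``Addition Theorem reduces to $h_A(\phi|_{t(G)})=0$'' step silently presupposes $h_A(\phi)=0$, which you have not established at that point; the lemma above supplies exactly $H_A(\phi,\{0,g\})=0$ for every $g\in G$, which is what your cyclic-module induction actually needs. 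Finally, to conclude $t(G)$ is finite (not merely of zero entropy) you should note explicitly that $G$ is Noetherian as a $\Z[\phi]$-module (being finitely generated over the Noetherian ring $\Z[X]$), so $t(G)$ is itself finitely generated and the finiteness of finitely generated torsion cyclic submodules suffices.
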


 To prove this theorem the following consequence of the Algebraic Yuzvinski Formula is applied in \cite{DG1}. In particular, it is used to find non-zero periodic points of an automorphism of $\Q^N$ with zero algebraic entropy.

\begin{corollary}
Let $N$ be a positive integer and $\phi:\Q^N\to\Q^N$ an automorphism. If $h_A(\phi)=0$, then all the eigenvalues of $\phi$ are roots of unity. 
\end{corollary}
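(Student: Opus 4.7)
The plan is to unpack the vanishing of the Mahler measure forced by the Algebraic Yuzvinski Formula and then close the argument with Kronecker's classical theorem on algebraic integers. By the Algebraic Yuzvinski Formula applied to $\phi$, the hypothesis $h_A(\phi)=0$ translates into $m(p_\phi(X))=0$, where $p_\phi(X)=sX^N+a_1X^{N-1}+\ldots+a_N\in\Z[X]$ is the characteristic polynomial of $\phi$ over $\Z$. Writing $\{\lambda_i:i=1,\ldots,N\}\subseteq\C$ for the roots of $p_\phi$, the defining formula
$$m(p_\phi(X))=\log|s|+\sum_{|\lambda_i|>1}\log|\lambda_i|$$
decomposes $0$ as a sum of two non-negative summands, so both must vanish. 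Hence $s=1$ (so that $p_\phi$ is monic in $\Z[X]$ and every $\lambda_i$ is an algebraic integer) and $|\lambda_i|\leq 1$ for all $i$.

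Next I would exploit that $\phi\in\Aut(\Q^N)$ to upgrade these inequalities to equalities. With $s=1$, the determinant satisfies $\det(\phi)=(-1)^N a_N\in\Z$, and it is non-zero since $\phi$ is invertible, so $|\det(\phi)|\geq 1$. On the other hand $|\det(\phi)|=\prod_{i=1}^N|\lambda_i|$, and since each factor is at most $1$ the product can attain $1$ only when each factor is exactly $1$. Thus $|\lambda_i|=1$ for every $i$.

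At this stage every $\lambda_i$ is a non-zero algebraic integer whose Galois conjugates are themselves roots of $p_\phi$ (the minimal polynomial of $\lambda_i$ over $\Q$ divides $p_\phi$), and so also have absolute value $1$. Kronecker's classical theorem then yields that each $\lambda_i$ is a root of unity, which is the desired conclusion. The only non-trivial ingredients are the Algebraic Yuzvinski Formula (already established in the paper) and Kronecker's theorem; the linking steps are the routine non-negativity observation and the determinant trick, so no serious obstacle arises. As an alternative to the determinant trick, one could apply the Algebraic Yuzvinski Formula also to $\phi^{-1}$, using $h_A(\phi^{-1})=h_A(\phi)=0$ (which follows from Proposition \ref{prop}(4) together with the fact that $\mathrm{mod}_{\Q^N}\equiv 1$ on the discrete group $\Q^N$, cf.\ Example \ref{example}(a)) to derive $|1/\lambda_i|\leq 1$ directly, and then conclude with Kronecker as above.
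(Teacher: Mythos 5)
Your proposal is correct and follows essentially the same route the paper takes: apply the Algebraic Yuzvinski Formula to read off $s=1$ (hence $p_\phi$ is monic and the $\lambda_i$ are algebraic integers) and $|\lambda_i|\leq 1$, then invoke Kronecker's theorem to conclude the $\lambda_i$ are roots of unity. The intermediate determinant step (or, alternatively, the application of the AYF to $\phi^{-1}$) that you use to first pin down $|\lambda_i|=1$ is redundant given the strong form of Kronecker's theorem the paper invokes, which already asserts that a non-zero algebraic integer all of whose conjugates lie in the closed unit disk is a root of unity; it is nevertheless a correct, self-contained safeguard and costs nothing.
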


Indeed the Algebraic Yuzvinski Formula implies that the characteristic polynomial $p_\phi(X)$ of such $\phi$ over $\Z$ is monic and all the roots $\{\lambda_i:i=1,\ldots,N\}\subseteq \C$ of $p_\phi(X)$ have $|\lambda_i|\leq 1$. Now Kronecker Theorem \cite{Kr} implies that all $|\lambda_i|=1$.



\medskip
For more details about this topic see \cite{DG1}. We conclude with the following general problem.

\begin{problem}
Is it possible to extend the results from \cite{DG1} to the general case of $\flow(\mathfrak{LA})$?
\end{problem}

\end{document}